\documentclass[aos]{imsart}
\usepackage{amsmath,amssymb,amsthm}
\usepackage[mathscr]{eucal}
\usepackage{mathtools}
\usepackage[math]{easyeqn}
\usepackage{srcltx}
\usepackage[numbers]{natbib} 
\usepackage{comment}
\usepackage{etoolbox}
\usepackage{mathrsfs}
\usepackage[usenames, dvipsnames]{color}
\usepackage{xfrac}
\usepackage{todonotes}
\usepackage{multirow}
\usepackage[autostyle]{csquotes}
\usepackage{booktabs}
\usepackage{longtable}
\usepackage{xr}
\usepackage{enumitem}
\usepackage{bbm}
\usepackage{bm}
\usepackage{url}
\renewcommand\cite{\citet}
\RequirePackage[colorlinks,linkcolor=blue,citecolor=blue,urlcolor=blue]{hyperref}

\doi{10.1214/22-AOS2192}

\newtheorem{lemma}{Lemma}[section]
\newtheorem{theorem}{Theorem}[section]
\newtheorem{proposition}{Proposition}[section]

\newtheorem{corollary}{Corollary}
\theoremstyle{remark}
\newtheorem{remark}{Remark}[section]
\theoremstyle{newremark}

\theoremstyle{remark}
\numberwithin{equation}{section}
\theoremstyle{definition}

\def \phid {\varphi}
\def \define{\coloneqq}

\def \Sigmaz {\Sigma_{z}}

\def \Sigmax {\Sigma}
\def \SigmaT {\Sigma_{T}}
\def \Sigmah {\hat{\Sigma}}
\def \Sigmac {\check{\Sigma}}

\def \epsc{\varepsilon}

\def \Var {\operatorname{Var}}

\def \tsum{{\textstyle \sum}}

\def\R{\mathbb{R}}
\def\E{\mathbb{E}}
\def\P{\mathbb{P}}

\newcommand\inner[2]{\langle #1, #2 \rangle}
\def \Id{I}
\def \Ind{\mathbbm{1}}
\def \dimp{p}

\def \cM {M}

\def \lmin{\lambda_{0}}
\def \lminz{\lambda_{z}}


\def \Zsigma{Z_{\Sigma}}

\def \xv{x}

\def \Zt {\tilde{Z}}
\def \Ut {\tilde{U}}






\def \St{\tilde{S}}

\def \st{\tilde{s}}

\def \aClbt {\tilde{C}_{\BCl}}

\def\betau{\beta_{u}}

\def \cmKK{\mathrm{b}}
\def \adj{{\mathrm{a}}}

\def \Xs{X^{\ast}}

\def \Ps{\Pb}
\def \Es{\Eb}

\def \Sbar {S_{0,n}}
\def \Snast {S^{\ast}_{n}}
\def \Xbar{\bar{X}}

\def \Xmean{\bar{X}}
\def \Xs{X^{\ast}}
\def \Pb{\P^{\ast}}
\def \Eb{\E^{\ast}}

\def \BCl{\mathscr{B}}
\def \ACl{\mathscr{A}}
\def \HCl{\mathscr{H}}
\def \Zy{Z}
\def \Uy{U}

\makeatletter
\let\@fnsymbol\@arabic
\makeatother
\makeatletter
\newcommand{\labitem}[2]{%
\def\@itemlabel{\textbf{#1}}
\item
\def\@currentlabel{#1}\label{#2}}
\makeatother

\begin{document}

\begin{frontmatter}

\title{New Edgeworth-type expansions with finite sample guarantees\thanksref{a1}}

\runtitle{Nonasymptotic Edgeworth-type expansions}
 \thankstext{A1}{First version submitted on June 5, 2020.}
 
\begin{aug}
\author{\fnms{Mayya} \snm{Zhilova}\thanksref{a2}}
\runauthor{M. Zhilova}

\thankstext{a2}{Support by the National Science Foundation Awards CAREER DMS-2048028 and DMS-1712990 is gratefully acknowledged.} 
\address{School of Mathematics\\ Georgia Institute of Technology\\
Atlanta, GA 30332-0160 USA\\
 e-mail: \href{mailto:mzhilova@math.gatech.edu}{\texttt{mzhilova@math.gatech.edu}}}
\end{aug}
\begin{abstract}
We establish higher-order nonasymptotic expansions for a difference between probability distributions of sums of i.i.d. random vectors in a Euclidean space. The derived bounds are uniform over two classes of sets: the set of all Euclidean balls and the set of all half-spaces. These results allow to account for an impact of higher-order moments or cumulants of the considered distributions; the obtained error terms depend on a sample size and a dimension explicitly. The new inequalities outperform accuracy of the normal approximation in existing Berry--Esseen inequalities under very general conditions. Under some symmetry assumptions on the probability distribution of random summands, the obtained results are optimal in terms of the ratio between the dimension and the sample size. The new technique which we developed for establishing nonasymptotic higher-order expansions can be interesting by itself.  
Using the new higher-order inequalities, we study accuracy of the nonparametric bootstrap approximation and propose a bootstrap score test under possible model misspecification. The results of the paper also include explicit error bounds for general elliptic confidence regions for an expected value of the random summands, and optimality of the Gaussian anti-concentration inequality over the set of all Euclidean balls. 
\end{abstract}
\begin{keyword}[class=MSC]
\kwd[Primary ]{62E17}
\kwd{62F40}
\kwd[; secondary ]{62F25}
\end{keyword}

\begin{keyword}
\kwd{Edgeworth series, dependence on dimension, higher-order accuracy, multivariate Berry--Esseen inequality, chi-square approximation, finite sample inference, anti-concentration inequality, bootstrap, elliptic confidence sets, linear contrasts, bootstrap score test, model misspecification}
\end{keyword}
\end{frontmatter}

\section{Introduction}

The Edgeworth series had been introduced by 
\citet{Edgeworth1896xi,Edgeworth1905law} and \citet{Chebyshev1890deux}, and developed by 
\citet{Cramer28} (see Section 2.9 by \citet{Hall1992bootstbook} for a detailed overview of early works about the Edgeworth series).
Since that time, the Edgeworth expansion has become one of the major asymptotic techniques for approximation of a c.d.f. or a p.d.f. 
In particular, the Edgeworth expansion is a powerful instrument for establishing rates of convergence in the CLT and for 
studying accuracy of the bootstrap.  

In this paragraph we recall a basic form of the Edgeworth series and their properties that are useful for comparison with the proposed results; this statement can be found in Chapter 5 by \cite{Hall1992bootstbook} (see also \cite{Bhattacharya1986normal,Kolassa2006series,Skovgaard1986multivariate}). 
 Let $S_{n}\define n^{-1/2}\sum_{i=1}^{n}X_{i}$ for i.i.d. $\R^{d}$-valued random vectors $\{X_{i}\}_{i=1}^{n}$ with $\E X_{i}=0, \Sigma\define \Var (X_{i})$, and $\E |X_{i}^{\otimes(k+2)}|<\infty$. Let $\ACl$ denote a class of sets $A\subseteq \R^{d}$ satisfying 
 \begin{align}
\label{cond:iso_intro}
 \sup\nolimits_{A\in\ACl} \int _{(\partial A)^{\epsc}} \phid(x)dx = O(\epsc), \quad \epsc \downarrow 0,
 \end{align}
where $ \phid(x)$ is the p.d.f. of $\mathcal{N}(0,\Id_{d})$, and $(\partial A)^{\epsc}$ denotes the set of points distant no more than $\epsc$ from the boundary $\partial A$ of $A$.  This condition holds for any measurable convex set in $\R^{d}$. Let also $\psi(t)\define \E e^{it^{T}X_{1}}$. If the Cram\'{e}r  condition 
\begin{align}
\label{ineq:CramerC}
\limsup\nolimits_{\|t\|\to\infty}|\psi(t)|<1
\end{align}
is fulfilled, then  
\begin{align}
\label{eq:ES_multivar_inro}
\P(S_{n}\in A)&=\int_{A} \{\phid_{\Sigma}(x)+{{\textstyle\sum\limits_{j=1}^{k}}} n^{-j/2} P_{j}(-\phid_{\Sigma}: \{\kappa_{j}\})(x) \bigr\}dx+o(n^{-k/2})
\end{align}
for $n\to \infty$.  The remainder term equals $o(n^{-k/2})$ uniformly in $A\in \ACl$, $\phid_{\Sigma}(x)$ denotes the p.d.f. of $\mathcal{N}(0,\Sigma)$; $ \kappa_{j}$ are cumulants of $X_{1}$, and $P_{j}(-\phid_{\Sigma}: \{\kappa_{j}\})(x)$ is a density of a signed measure, recovered from the series expansion of the characteristic function of $X_{1}$ using the inverse Fourier transform. In the multivariate case, a calculation of an expression for $P_{j}$ for large $j$ is rather involved since the number of terms  included in it grows with $j$ (see \cite{Mccullagh1987tensor}). 

Expansion \eqref{eq:ES_multivar_inro} does not hold for arbitrary random variables, in particular, Cram\'{e}r's condition \eqref{ineq:CramerC} holds if a probability distribution of $X_{1}$ has a nondegenerate absolutely continuous component. Condition \eqref{cond:iso_intro} does not take into account dependence on dimension $d$. Indeed, if $d$ is not reduced to a generic constant, then the right hand side of \eqref{cond:iso_intro} depends on $d$ in different ways for major classes of sets. Let  us refer to the works of \cite{Ball1993Reverse,Bentkus2003BE,Klivans2008lGaussSurface,Chernozhukov2012comparison,Belloni2018subvector}, where the authors established anti-concentration inequalities for important classes of sets.  

Due to the asymptotic form of the Edgeworth series \eqref{eq:ES_multivar_inro} for probability distributions, this kind  of expansions is typically used in the asymptotic framework (for $n\to \infty$) without  taking into account dependence of the remainder term $o(n^{-k/2})$ on the dimension. To the best of our knowledge, there have been no studies on accuracy of the Edgeworth expansions in finite sample multivariate setting so far. In this paper, we consider this framework and establish approximating bounds of type \eqref{eq:ES_multivar_inro} with explicit dependence on dimension $d$ and sample size $n$; this is useful for numerous contemporary applications, where it is important to track dependence of error terms on $d$ and $n$. Furthermore, these results allow to account for an impact of higher-order moments of the considered distributions, which is important for deriving approximation bounds with higher-order accuracy. In order to derive the explicit multivariate higher-order expansions, we propose a novel proof technique that can be interesting and useful by itself. 

One of the major applications of the proposed approximation bounds is the study of a performance of bootstrapping procedures in the nonasymptotic multivariate setting.  
In statistical inference, the bootstrap is one of the basic methods for estimation of probability distributions and quantiles of various statistics.  
Bootstrapping is well known for its \emph{good finite sample performance} (see, for example, \cite{Horowitz2001bootstrapHandbook}), for this reason it is widely used in applications. However, a majority of theoretical results about the bootstrap are asymptotic (for $n\to \infty$), and most of the existing works about bootstrapping in the nonasymptotic high-dimensional/multivariate setting are quite recent.  \cite{ArlotBlanch2010}  studied generalized weighted bootstrap for construction of nonasymptotic confidence bounds in $\ell_{r}$-norm for $r\in [1, +\infty)$ for the mean value of high dimensional random vectors with a symmetric and bounded (or with Gaussian) distribution.  \citet*{Chernozhukov2013CLT1} established results about accuracy of Gaussian approximation and  bootstrapping  for maxima of sums of high-dimensional vectors in a very general set-up. In \citep*{Chernozhukov2014CLT} the authors extended and improved the results from maxima to general hyperractangles and sparsely convex sets. Bootstrap approximations can provide \emph{faster rates of convergence} than the normal approximation (see \cite{PraestgaardWellner1993exchangeably,Barbe1995weighted,Liu1988,Mammen1993bootstrap,Lahiri2013resampling}, and references therein), however most of the existing results on this topic had been established in an asymptotic framework. In 2016, in \cite{Zhilova2020}, we derived higher-order properties of the nonparametric and multiplier bootstrap. Those results were the first progress made on the higher-order accuracy of the bootstrap in the nonasymptotic (and multivariate) framework.  
 In the present paper we derive new and much more general results. In particular, one of the implications of the proposed approximation bounds is an improvement of the Berry--Esseen inequality by \cite{Bentkus2003BE}. In Section \ref{sect:contrib} below we summarize the contribution and the structure of the paper.
\subsection{Contribution and structure of the paper}
\label{sect:contrib}
In Section \ref{sect:multivar} we establish expansions for the difference between probability distributions of $S_{n}\define n^{-1/2}\sum_{i=1}^{n}X_{i}$ for i.i.d. random vectors $\{X_{i}\}_{i=1}^{n}$ and $\mathcal{N}(0, \Sigma)$, $\Sigma\define \Var (S_{n})$. The bounds are uniform over two classes of subsets of $\R^{d}$: the set $\BCl$ of all $\ell_{2}$-balls, and the set $\HCl$ of all half-spaces. These classes of sets are useful when one works with linear or quadratic approximations of a smooth function of $S_{n}$; they are also useful for construction of confidence sets based on linear contrasts, for elliptic confidence regions, and for $\chi^{2}$-type approximations in various parametric models where a multivariate statistic is asymptotically normal. In Sections \ref{sect:ellip} and \ref{sect:score} we consider examples of elliptic confidence regions, Rao's score test for a simple null hypothesis, and its bootstrap version that remains valid even in case of a misspecified parametric model. 

In Theorem \ref{theorem:first}, where we study higher-order accuracy of the normal approximation of $S_{n}$ over the class $\BCl$, the 
approximation error is $\leq C n^{-1/2}R_{3}+ C\sqrt{d^{2}/n}+Cd^{2}/n$. $R_{3}$ is a sublinear function of the 3-d moment $\E (\Sigma^{-1/2} X_{1})^{\otimes 3}$, and $|R_{3}|\leq  \|\E (\Sigma^{-1/2} X_{1})^{\otimes 3}\|_{\mathrm{F}}$ for the Frobenius norm $ \|\cdot\|_{\mathrm{F}}$. 
The derived expressions for the error terms as well as the numerical constants are explicit. One of the implications of this result is an improvement of the Berry--Esseen inequality by \citet{Bentkus2003BE} that has the best known error rate for the class $\BCl$ (Remark \ref{remark:BE2} provides a detailed comparison between these results).

The proposed approximation bounds are not restricted to the normal approximation. In Theorems \ref{theorem:multivar_gen}, \ref{theorem:HS_var} we consider the uniform bounds between the distributions of $S_{n}$ and $S_{T,n}\define n^{-1/2}\sum_{i=1}^{n}T_{i}$ for i.i.d. random vectors $\{T_{i}\}_{i=1}^{n}$  with the same expected value as $X_{i}$ but possibly different covariance matrices. Here the error terms include a sublinear function of the differences  $\E(X_{1}^{\otimes j})- \E (T_{1}^{\otimes j})$ for $j=2,3$.

Let us  also emphasize that the derived expansions impose considerably weaker conditions on probability distributions of $X_{i}$ and $T_{i}$ than the Edgeworth expansions \eqref{eq:ES_multivar_inro} since our results do not require the Cram\'{e}r condition \eqref{ineq:CramerC} to be fulfilled, and they assume a smaller number of finite moments. Furthermore, the constants in our results do not depend on $d$ and $n$, which allows to track dependence of the error terms on them. To the best of our knowledge, there have been no such results obtained so far.

In Section \ref{sect:proof_contrib} we describe key steps of the proofs and the new technique which we developed for establishing the nonasymptotic higher-order expansions.

In Section \ref{sect:symm} we consider the case of symmetrically distributed $X_{i}$. The error term in the normal approximation bound is $\leq  C(d^{3/2}/n)^{1/2}$, which is smaller than the error term $\leq  C(d^{2}/n)^{1/2}$ provided by Theorem \ref{theorem:first} for the general case. Furthermore, we construct a lower bound, based on the example by \citet{Portnoy1986central}, showing that in this case the relation $d^{3/2}/n\to 0$ is required for consistency of the normal approximation. 

In Section \ref{section:ES_bootstr} we study accuracy of the nonparametric bootstrap approximation over set $\BCl$, using the higher-order methodology from Section \ref{sect:multivar}. The resulting error terms depend on  the quantities that characterize the sub-Gaussian tail behavior of $X_{i}$ (proportional to their  Orlicz $\psi_{2}$-norms) explicitly. 

In Section \ref{sect:cited} we collect statements from the earlier paper \citep{Zhilova2020} which are used in the proofs of main results; we also provide improved  bounds for constants in these statements and show optimality of the Gaussian anti-concentration bound over set $\BCl$. Proofs of the main results are presented in Sections \ref{sect:proof_2} and \ref{sect:ES_boostr_proofs}. 

\subsection{Notation}
For a vector $X=(x_{1},\dots,x_{d})^{T}\in \R^{d}$, $\|X\|$ denotes the Euclidean norm, $\E|X^{\otimes k}|<\infty$ denotes that $\E |x_{i_{1}} \cdots x_{i_{k}}|<\infty$ for all integer $i_{1},\dots, i_{k}\in \{1,\dots, d\}$. 
For tensors $A, B\in {\R^{d}}^{\otimes k}$, their inner product equals $\inner{A}{B}\define \sum_{1\leq i_{j}\leq d}a_{i_{1},\dots,i_{k}}b_{i_{1},\dots,i_{k}}$, where $a_{i_{1},\dots,i_{k}}$ and $b_{i_{1},\dots,i_{k}}$ are elements of $A$ and $B$.  
The operator norm of $A$ (for $k\geq 2$) induced by the Euclidean norm is denoted by $\|A\|\define \sup\{ \inner{A}{\gamma_{1}\otimes \dots \otimes \gamma_{k}}: \| \gamma_{j}\|=1, \gamma_{j}\in \R^{d}, j=1,\dots,k\}$ (see \cite{Wang2017operator}). 
The Frobenius norm is $\|A\|_{\mathrm{F}}=\sqrt{\inner{A}{A}}$. The maximum norm is $\|A\|_{\max}\define \max \{|a_{i_{1},\dots,i_{k}}|: i_{1},\dots, i_{k}\in \{1,\dots, d\}\}$. 
 For a function $f:\R^{d}\mapsto \R$ and $h\in \R^{d}$,  $f^{(s)}(x)h^{s}$ denotes the higher-order directional derivative $(h^{T}\nabla)^{s}f(x)$. $\phid(x)$ denotes  the p.d.f. of the standard normal distribution in $\R^{d}$. 
 $C,c$ denote positive generic constants. The abbreviations p.d. and p.s.d. denote positive definite and positive semi-definite matrices correspondingly.

\section{Higher-order approximation bounds}
\label{sect:multivar}
Denote for random vectors $X,Y$ in $\R^{d}$
\begin{align}
\label{def:distB}
\Delta_{\BCl}(X,Y)&\define \sup\limits_{ r\geq 0,\, t\in\R^{d}}\left|\P( \|X-t\|\leq r)-\P( \|Y-t\|\leq r)\right|.
\end{align}
Introduce the following functions
\begin{align}
\label{def:h1h2}
h_{1}(\beta)\define 
 h_{2}(\beta)+
 (1-\beta^{2})^{-1}\beta^{-4},
 \quad h_{2}(\beta)\define (1-\beta^{2})^{2}\beta^{-4}
\end{align}
for $\beta\in(0,1)$. 
Let $\{X_{i}\}_{i=1}^{n}$ be i.i.d. $\R^{d}$-valued random vectors with $\E |X_{i}^{\otimes 4}|<\infty$ and p.d. covariance matrix $\Sigma\define\Var (X_{i})$. Without loss of generality, assume that $\E X_{i}=0$. The following theorem provides the higher-order approximation bounds between $S_{n}\define n^{-1/2}\sum_{i=1}^{n}X_{i}$ and  the multivariate normal random vector $\Zsigma\sim \mathcal{N}(0,\Sigma)$ in terms of the distance $\Delta_{\BCl}(S_{n},\Zsigma)$.
\begin{theorem}
\label{theorem:first}
Suppose that the conditions above are fulfilled, then it holds for any $\beta\in(0,1)$
\begin{align}
\nonumber
&\Delta_{\BCl}(S_{n},\Zsigma)
\leq (\sqrt{6}\beta^{3})^{-1}R_{3}n^{-1/2}
\\
\nonumber
&  
\quad
+  2C_{B,4} \|\Sigma^{-1}\|\|\Sigma\|\bigl\{  
(h_{1}(\beta)+(4\beta^{4})^{-1})
\E\| \Sigma^{-1/2} X_{1}\|^{4}+d^{2}+2d
\bigr\}^{1/2}n^{-1/2}
   \\
   \nonumber
   &\quad+
   (2\sqrt{6})^{-1} \bigl\{
h_{1}(\beta) \E \|  \Sigma^{-1/2} X_{1}\|^4
   +h_2(\beta)(d^{2}+2d)\bigr\}n^{-1},
\end{align}
where $C_{B,4}\geq 9.5$ is a constant independent from $d,n$, and from a probability distribution of $X_{i}$ (see the definition of $C_{B,4}$ in the proof after formula \eqref{ineq:BE_bound}; 
$R_{3}$ is a sublinear function of $\E (\Sigma^{-1/2} X_{1})^{\otimes 3}$ such that, in general,
\begin{align*}
|R_{3}|&\leq \|\E (\Sigma^{-1/2} X_{1})^{\otimes 3}\|_{\mathrm{F}}
\leq \|\E (\Sigma^{-1/2} X_{1})^{\otimes 3}\| d.
\end{align*}
Furthermore,  if $N$ is the number of nonzero elements in $\E (\Sigma^{-1/2} X_{1})^{\otimes 3}$ and $N\leq d^{2}$, $m_{3}\define\|\E (\Sigma^{-1/2} X_{1})^{\otimes 3}\|_{\max}$, then $$|R_{3}|\leq m_{3}\sqrt{N}\leq m_{3} d$$ (a detailed definition of $R_{3}$ is given in \eqref{def:R3}). 
\end{theorem}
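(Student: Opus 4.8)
The plan is to convert the supremum over balls into a comparison of expectations of smooth test functions, and then to run a multivariate Lindeberg-type telescoping in which the third-order increment is kept explicitly (it becomes $R_{3}$) while the fourth-order increments are absorbed into the remainder; the free parameter $\beta$ will enter as the scale of an auxiliary Gaussian smoothing. After the harmless normalization $\E X_{i}=0$, I fix a ball $B=\{x:\|x-t\|\le r\}$, take $Z''\sim\mathcal N(0,\Sigma)$ independent of $\{X_{i}\}$, and write the probability that a convolution with a $\beta$-scaled copy of $Z''$ lies in $B$ as $\E\,g(\cdot)$, where $g$ is the corresponding Gaussian (heat-semigroup) smoothing of $\Ind_{B}$. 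Then $g\in C^{\infty}$, its derivative tensors $g^{(s)}$ are supported in a shell around $\partial B$, and their sizes are governed by negative powers of $\beta$ and of $(1-\beta^{2})^{1/2}$ --- exactly the quantities that will produce the factors $\beta^{-3}$, $\beta^{-4}$ and the functions $h_{1}(\beta),h_{2}(\beta)$.

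The first step is the anti-concentration bookkeeping. Replacing $\Ind_{B}$ by $g$ on the $\Zsigma$-side and on the $S_{n}$-side costs $\P(\Zsigma\in(\partial B)^{\epsc})$ and $\P(S_{n}\in(\partial B)^{\epsc})$ for the relevant smoothing radius $\epsc$. The Gaussian term is handled by the sharp Gaussian anti-concentration bound over $\BCl$ recalled in Section~\ref{sect:cited}; the $S_{n}$ term is reduced to it by the multivariate Berry--Esseen inequality for balls (the bound referred to in the proof as \eqref{ineq:BE_bound}, with constant $C_{B,4}\ge 9.5$). This is where the summand $2C_{B,4}\|\Sigma^{-1}\|\|\Sigma\|\{\dots\}^{1/2}n^{-1/2}$ comes from: the condition number $\|\Sigma^{-1}\|\|\Sigma\|$ appears because balls become ellipsoids after standardization by $\Sigma^{-1/2}$, and the bracket is assembled from $\E\|\Sigma^{-1/2}X_{1}\|^{4}$ together with $d^{2}+2d=\E\|\Sigma^{-1/2}\Zsigma\|^{4}$, the Gaussian fourth moment.

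The second, central step compares $\E g$ of the $\beta$-scaled sample sum with $\E g$ of the $\beta$-scaled Gaussian by swapping $X_{i}$ for an independent $\mathcal N(0,\Sigma)$ summand one index at a time, Taylor-expanding $g$ to fourth order about the relevant partial sum. The zeroth-, first- and second-order increments cancel, since $\E X_{i}=\E Z_{i}$ and $\Var X_{i}=\Var Z_{i}=\Sigma$. The third-order increment equals a constant times $n^{-3/2}\langle g^{(3)}(\cdot),\E X_{i}^{\otimes3}\rangle$ --- the Gaussian third moment vanishes --- and summing the $n$ such terms and pulling the derivative tensor back onto the standardized third moment yields the leading term $(\sqrt{6}\,\beta^{3})^{-1}R_{3}n^{-1/2}$, where $R_{3}$ is \emph{defined} as that contraction of $\E(\Sigma^{-1/2}X_{1})^{\otimes3}$; this makes it a sublinear functional bounded by $\|\E(\Sigma^{-1/2}X_{1})^{\otimes3}\|_{\mathrm F}$, with the sparse-support refinement $m_{3}\sqrt{N}$ following from the corresponding entrywise estimate. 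The fourth-order Lagrange remainders contribute $n^{-2}$ times bounds on $g^{(4)}$ paired with $\E\|X_{i}\|^{4}$ and $\E\|Z_{i}\|^{4}$, and summing over $i$ gives $(2\sqrt{6})^{-1}\{h_{1}(\beta)\E\|\Sigma^{-1/2}X_{1}\|^{4}+h_{2}(\beta)(d^{2}+2d)\}n^{-1}$. Adding the three contributions and keeping $\beta$ free yields the stated inequality; in applications one then minimizes over $\beta\in(0,1)$, trading the $\beta^{-3}R_{3}n^{-1/2}$ term against the $n^{-1}$ terms.

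The hard part --- and the place where a crude argument would lose powers of $d$ --- is the estimation of the third- and fourth-order Taylor increments. One cannot use $\|g^{(s)}\|_{\infty}$, as that would cost $\epsc^{-s}$ uniformly over space; instead, since $g^{(s)}$ is concentrated in a thin shell around $\partial B$, each increment must be integrated against the law of the randomly stopped partial sum, which is itself only approximately Gaussian. So the anti-concentration estimates for the partial sums must be fed back into the expansion, and the Berry--Esseen transfer and the Lindeberg telescoping have to be run together, with every constant tracked, to arrive at the explicit $C_{B,4}\ge 9.5$, the factors $\sqrt{6}$ and $2\sqrt{6}$, and the exact shape of $h_{1},h_{2}$. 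Once this machinery is set up, extending it from $\BCl$ to $\HCl$ and to a non-Gaussian reference law $S_{T,n}$ (Theorems~\ref{theorem:multivar_gen} and~\ref{theorem:HS_var}) amounts to replacing the vanishing Gaussian third and fourth moments by the mismatches $\E X_{1}^{\otimes j}-\E T_{1}^{\otimes j}$, $j=2,3$, in the telescoping.
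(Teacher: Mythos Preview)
Your plan has a structural gap. The smoothing you propose is \emph{external}: you convolve $\Ind_{B}$ with a $\beta$-scaled Gaussian to get $g$, and then run Lindeberg on $\E g(S_{n})-\E g(Z_{\Sigma})$. But the cost of replacing $\Ind_{B}$ by $g$ is an anti-concentration probability on a shell whose width is of order $\beta$ (the smoothing scale), not of order $n^{-1/2}$. For fixed $\beta\in(0,1)$ that cost is $O(\beta)$ uniformly in $n$, so it cannot produce the second term of the theorem, which is $O(n^{-1/2})$ for every fixed $\beta$. You have also misread the role of the $C_{B,4}$ term: in the paper it is not an anti-concentration transfer but the \emph{entire} Berry--Esseen contribution $\Delta_{\BCl}(S_{n},\tilde S_{n})$. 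Finally, the factors $(1-\beta^{2})$ inside $h_{1},h_{2}$ have no source in an external-smoothing picture; they arise from a construction you do not have.

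What the paper actually does is build an intermediate sum $\tilde S_{n}=n^{-1/2}\sum_{i}Y_{i}$ with $Y_{i}=Z_{i}+\alpha_{i}\tilde X_{i}$, where $Z_{i}\sim\mathcal N(0,\beta^{2}\Sigma)$, $\tilde X_{i}$ is an independent copy of $X_{i}$, and $\alpha_{i}$ is an independent scalar with $\E\alpha_{i}=0$, $\E\alpha_{i}^{2}=1-\beta^{2}$, $\E\alpha_{i}^{3}=1$, $\E\alpha_{i}^{4}=(1-\beta^{2})^{2}+(1-\beta^{2})^{-1}$ (existence via the moment criterion). This forces $\E Y_{i}^{\otimes j}=\E X_{i}^{\otimes j}$ for $j\le 3$ while giving $Y_{i}$ a built-in Gaussian component. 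Then $\Delta_{\BCl}(S_{n},Z_{\Sigma})\le\Delta_{\BCl}(S_{n},\tilde S_{n})+\Delta_{\BCl}(\tilde S_{n},Z_{\Sigma})$. The first piece is handled by Theorem~\ref{theorem:BE16} with $K=4$ (three matching moments), yielding exactly the $2C_{B,4}\|\Sigma^{-1}\|\|\Sigma\|\{\cdots\}^{1/2}n^{-1/2}$ term. For the second piece both $\tilde S_{n}$ and $Z_{\Sigma}$ are written as $\tilde Z+n^{-1/2}\sum_{i}(\cdot)$ with a \emph{common} $\tilde Z\sim\mathcal N(0,\beta^{2}\Sigma)$, so the difference of probabilities becomes a telescoping sum of integrals $\int_{B'}\varphi(t-\tilde s_{l}-\cdot)\,dt$; Taylor-expanding $\varphi$ to order four and applying the Hermite bound of Lemma~\ref{lemma:symm} gives $R_{3}$ and $R_{4}$ with no smoothing cost and no anti-concentration step at all. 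The functions $h_{1},h_{2}$ then come directly from $\E\alpha_{i}^{4}$ and from the variance $1-\beta^{2}$ of the non-Gaussian part.

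One further correction: for a Gaussian convolution the Hermite bound in Lemma~\ref{lemma:symm} already gives $\bigl|\int_{B'}\varphi^{(k)}(t-s)h^{k}\,dt\bigr|\le\sqrt{k!}\,\|h\|^{k}$ uniformly in $s$ and $B'$ (the Cauchy--Schwarz step works over any set), so there is no need to ``feed partial-sum anti-concentration back into the expansion'' as you suggest; the derivative control is pointwise and dimension-free.
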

\begin{corollary}
\label{corollary:theorem1}
Let $\beta=0.829$, close to the local minimum of $h_{1}(\beta)$, then
\begin{align}
\nonumber
\Delta_{\BCl}(S_{n},\Zsigma)&
\leq
 0.717\|\E (\Sigma^{-1/2} X_{1})^{\otimes 3}\| dn^{-1/2}
\\
\nonumber
&\quad +
  2C_{B,4} \|\Sigma^{-1}\|\|\Sigma\|\bigl\{  
7.51
\E\|\Sigma^{-1/2} X_{1}\|^{4}+d^{2}+2d
\bigr\}^{1/2}n^{-1/2}
   \\
   \nonumber
   &\quad+
   \bigl\{
1.43 \E \|\Sigma^{-1/2} X_{1}\|^4
   +0.043(d^{2}+2d)\bigr\}n^{-1}.
 \end{align}
Let also $m_{4}\define\|\E (\Sigma^{-1/2} X_{1})^{\otimes 4}\|_{\max}$, then
\begin{align}
\label{ineq:DeltaB3}
\Delta_{\BCl}(S_{n},\Zsigma)
&\leq
  0.717\|\E (\Sigma^{-1/2} X_{1})^{\otimes 3}\| dn^{-1/2}
\\
\nonumber
&\quad  +
2 C_{B,4} \|\Sigma^{-1}\|\|\Sigma\|\bigl\{  
(7.51m_{4}+1)d^{2}+2d
\bigr\}^{1/2}n^{-1/2}
   \\&\quad+
   \bigl\{
(1.425m_{4}+0.043)d^{2}+0.086d\bigr\}n^{-1}.
   \nonumber
\end{align}
Hence, if all the terms in \eqref{ineq:DeltaB3} except $d$ and  $n$ are bounded by a generic constant $C>0$, then %
\begin{align}
\label{ineq:bound1}
\Delta_{\BCl}(S_{n},\Zsigma)   \leq C\bigl\{\sqrt{d^2/n} +d^2/n\bigr\}.
\end{align}
\end{corollary}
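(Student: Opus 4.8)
The plan is to obtain all three displays directly from Theorem \ref{theorem:first}, using only elementary substitutions and crude bounds; no new probabilistic estimate is needed. \emph{Step 1 (first display).} I would substitute $\beta=0.829$ into the bound of Theorem \ref{theorem:first} and evaluate, from \eqref{def:h1h2}, the four numerical coefficients appearing there: $(\sqrt{6}\,\beta^{3})^{-1}\approx0.717$, $h_{1}(\beta)+(4\beta^{4})^{-1}\approx7.51$, $(2\sqrt{6})^{-1}h_{1}(\beta)\approx1.43$, and $(2\sqrt{6})^{-1}h_{2}(\beta)\approx0.043$. For the leading term I would use the bound $|R_{3}|\le\|\E(\Sigma^{-1/2}X_{1})^{\otimes3}\|\,d$ already recorded in Theorem \ref{theorem:first}, which turns $(\sqrt{6}\beta^{3})^{-1}R_{3}n^{-1/2}$ into $0.717\,\|\E(\Sigma^{-1/2}X_{1})^{\otimes3}\|\,d\,n^{-1/2}$. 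Assembling the terms gives the first displayed inequality.

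\emph{Step 2 (inequality \eqref{ineq:DeltaB3}).} Writing $Y_{1}\define\Sigma^{-1/2}X_{1}$, I would expand $\|Y_{1}\|^{4}=\bigl(\sum_{i=1}^{d}Y_{1,i}^{2}\bigr)^{2}=\sum_{i,j=1}^{d}Y_{1,i}^{2}Y_{1,j}^{2}$, take expectations, and bound each of the $d^{2}$ terms $\E[Y_{1,i}^{2}Y_{1,j}^{2}]$ by $m_{4}=\|\E(Y_{1}^{\otimes4})\|_{\max}$, obtaining $\E\|Y_{1}\|^{4}\le m_{4}d^{2}$. Substituting this into the first display and absorbing the pure $d^{2}$ and $d$ contributions into the $m_{4}d^{2}$ term (for example $7.51\,\E\|Y_{1}\|^{4}+d^{2}+2d\le(7.51m_{4}+1)d^{2}+2d$, and analogously for the coefficient of $n^{-1}$) yields \eqref{ineq:DeltaB3}.

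\emph{Step 3 (inequality \eqref{ineq:bound1}).} Under the stated hypothesis that $\|\E(\Sigma^{-1/2}X_{1})^{\otimes3}\|$, $\|\Sigma^{-1}\|\|\Sigma\|$, $C_{B,4}$ and $m_{4}$ are all at most a generic constant, I would estimate the three terms of \eqref{ineq:DeltaB3} one at a time: the first is $\le C\,d\,n^{-1/2}\le C\sqrt{d^{2}/n}$; in the second, $(7.51m_{4}+1)d^{2}+2d\le Cd^{2}$ because $d\le d^{2}$ for $d\ge1$, so it is $\le C\sqrt{d^{2}/n}$; and in the third, $(1.425m_{4}+0.043)d^{2}+0.086d\le Cd^{2}$, so it is $\le Cd^{2}/n$. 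Summing the three bounds gives $\Delta_{\BCl}(S_{n},\Zsigma)\le C\{\sqrt{d^{2}/n}+d^{2}/n\}$. There is no real obstacle in this corollary; the only places needing slight attention are the crude moment bound $\E\|\Sigma^{-1/2}X_{1}\|^{4}\le m_{4}d^{2}$ in Step 2 — where one should simply control each mixed fourth moment by $m_{4}$ rather than attempt anything finer — and the routine bookkeeping of collapsing lower powers of $d$ into $d^{2}$ throughout Steps 2 and 3.
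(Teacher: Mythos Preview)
Your proposal is correct and matches exactly what the paper intends: the corollary is stated without a separate proof because it follows by direct substitution of $\beta=0.829$ into Theorem~\ref{theorem:first}, together with the elementary bound $\E\|\Sigma^{-1/2}X_{1}\|^{4}\le m_{4}d^{2}$ and the absorption of lower-order $d$-terms into $d^{2}$. Your numerical checks of the coefficients and the moment bound in Step~2 are the right justifications; there is nothing more to add.
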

\begin{remark}
\label{remark:BE2}
The Berry--Esseen inequality by \citet{Bentkus2003BE} shows that for $\Sigma=\Id_{d}$  and $\E \|X_{1}\|^{3}<\infty$,  $\Delta_{\BCl}(S_{n},\Zsigma)\leq c \E \|X_{1}\|^{3} n^{-1/2}$. The term $(\sqrt{6}\beta^{3})^{-1}n^{-1/2}R_{3}$ in Theorem \ref{theorem:first} has an explicit constant and, since this
is a sublinear function of the third moment of $\Sigma^{-1/2}X_{1}$, it can be considerably smaller than the third moment of the $\ell_{2}$-norm $\|\Sigma^{-1/2}X_{1}\|$. Corollary \ref{corollary:theorem1} shows that the error term in Theorem \ref{theorem:first} depends on $d$ and $n$ as $C(\sqrt{d^2/n} + d^2/n)$, which \emph{improves the Berry--Esseen approximation error} $C\sqrt{d^3/n}$ in terms of the ratio between $d$ and $n$. Theorem \ref{theorem:first} imposes a stronger moment assumption than the Berry--Esseen bound by \cite{Bentkus2003BE}, since the latter inequality assumes only 3 finite moments of $\|X_{i}\|$. However, the theorems considered here require much weaker conditions than the Edgeworth expansions \eqref{eq:ES_multivar_inro} that would assume in general at least $5$ finite moments of $\|X_{i}\|$ and the Cram\'{e}r condition  \eqref{ineq:CramerC}.
\end{remark}
\begin{remark}
Since functions $h_{1}(\beta), h_{2}(\beta)$ are known explicitly \eqref{def:h1h2}, the expression of the approximation error term in Theorem \ref{theorem:first} contains explicit constants and it even allows to optimize the error term (w.r.t.  parameter $\beta\in (0,1)$), depending on $R_{3}$, $\E \|\Sigma^{-1/2} X_{1}\|^4$, and other terms as well. Therefore, the results in this paper allow to address the problem of finding an optimal constant in Berry--Esseen inequalities (see, for example, \cite{Shevtsova2011absolute}).
\end{remark}

The following statement is an extension of Theorem \ref{theorem:first} to a general (not necessarily normal) approximating distribution.  Let $\{T_{i}\}_{i=1}^{n}$  be i.i.d random vectors in $\R^{d}$, with $\E T_{i}=0$, p.d. covariance matrix $\Var (T_{i})=\SigmaT$, and $\E |T_{i}^{\otimes 4}|<\infty$. Let also $S_{T,n}\define n^{-1/2} \sum_{i=1}^{n}T_{i}$.
\begin{theorem}
\label{theorem:multivar_gen}
Let $\{X_{i}\}_{i=1}^{n}$ satisfy conditions of Theorem \ref{theorem:first}. Firstly consider the case $\Var(T_{i})=\Var (X_{i})=\Sigma$. Denote $$\bar{V}_{4}\define \E\|\Sigma^{-1/2}X_{1}\|^{4}+\E\|\Sigma^{-1/2}T_{1}\|^{4}.$$ It holds for any $\beta\in(0,1)$
\begin{align*}
\Delta_{\BCl}(S_{n},S_{T,n})
&\leq (\sqrt{6}\beta^{3})^{-1}\bar{R}_{3,T}n^{-1/2}
+   (2\sqrt{6})^{-1} 
h_{1}(\beta) \bar{V}_{4}
   n^{-1},
\\
\nonumber
&  
\quad
+  \sqrt{8} C_{B,4} \|\Sigma^{-1}\|\|\Sigma\|\bigl\{  
(h_{1}(\beta)+(4\beta^{4})^{-1})\bar{V}_{4}
+2d^{2}+4d
\bigr\}^{1/2}n^{-1/2},
\end{align*}
where  $\bar{R}_{3,T}$ is a sublinear function of $\E (\Sigma^{-1/2} X_{1})^{\otimes 3}-\E (\Sigma^{-1/2} T_{1})^{\otimes 3}$ 
such that, in general,
\begin{align*}
|\bar{R}_{3,T}|&\leq \|\E (\Sigma^{-1/2} X_{1})^{\otimes 3}-\E (\Sigma^{-1/2} T_{1})^{\otimes 3} \|_{\mathrm{F}}
\\&\leq \|\E (\Sigma^{-1/2} X_{1})^{\otimes 3}-\E (\Sigma^{-1/2} T_{1})^{\otimes 3}\| d.
\end{align*}
Furthermore,  if $N_{T}$ is the number of nonzero elements in $\E (\Sigma^{-1/2} X_{1})^{\otimes 3}-\E (\Sigma^{-1/2} T_{1})^{\otimes 3}$, and $N_{T}\leq d^{2}$, $m_{3,T}=\|\E (\Sigma^{-1/2} X_{1})^{\otimes 3}-\E (\Sigma^{-1/2} T_{1})^{\otimes 3}\|_{\max}$, then $$|\bar{R}_{3,T}|\leq m_{3,T}\sqrt{N_{T}}\leq m_{3,T} d.$$

Now consider the case when $\Var X_{i}=\Sigma$ and $\Var T_{i}=\SigmaT$ are not necessarily equal to each other. Let  $\lmin^{2}>0$ denote the minimum of the smallest eigenvalues of $\Sigma$ and $\SigmaT$. Denote 
$$V_{4}\define \E\|X_{1}\|^{4}+\E\|T_{1}\|^{4}, \quad v_{4}\define \|\Sigmax\|^{2}+\|\SigmaT\|^{2}.$$ It holds for any $\beta\in(0,1)$ 
\begin{align*}
\Delta_{\BCl}(S_{n},S_{T,n})
&\leq (\sqrt{2}\beta^{2}\lmin^{2})^{-1}\|\Sigmax-\SigmaT\|_{\mathrm{F}}+(\sqrt{6}\beta^{3})^{-1}R_{3,T}n^{-1/2}
\\
\nonumber
&  
\quad
+  4\sqrt{2}C_{B,4}\lmin^{-2}\bigl\{  
h_{1}(\beta)V_{4}
+(d^{2}+2d)(v_{4}+1/2)
\bigr\}^{1/2}n^{-1/2}
\\&\quad+   2(\sqrt{6}\lmin^{4})^{-1}\bigl\{h_{1}(\beta) V_{4} +(d^{2}+2d)v_{4} \bigr\}n^{-1}, 
\end{align*}
where  ${R}_{3,T}$ is a sublinear function of $\E (X_{1}^{\otimes 3})-\E (T_{1}^{\otimes 3})$ 
such that, in general,
\begin{align*}
|{R}_{3,T}|&\leq \lmin^{-3}\|\E (X_{1}^{\otimes 3})-\E (T_{1}^{\otimes 3})\|_{\mathrm{F}}
\leq \lmin^{-3}\|\E (X_{1}^{\otimes 3})-\E (T_{1}^{\otimes 3})\| d
\end{align*}
 (a detailed definition of $R_{3,T}$ is given in \eqref{def:R3_T}. 
\end{theorem}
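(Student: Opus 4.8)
The plan is to reduce both assertions to Theorem \ref{theorem:first} (or rather, to the expansion machinery behind it) applied to suitably standardized summands, using the triangle inequality for $\Delta_{\BCl}$ together with a Lindeberg-type swapping argument. For the first case ($\Var T_i=\Var X_i=\Sigma$): after the linear change of variables $x\mapsto \Sigma^{-1/2}x$, which maps Euclidean balls to ellipsoids — but note that $\Delta_{\BCl}$ is \emph{not} invariant under this map, so more care is needed — I would instead work directly with the signed-measure (Edgeworth-type) approximand $G_n$ built from the first three moments (Gaussian density corrected by the third-order polynomial term). Write $\Delta_{\BCl}(S_n,S_{T,n})\le \Delta_{\BCl}(S_n,G_n)+\Delta_{\BCl}(G_n,S_{T,n})$. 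Each of the two terms is controlled by the one-distribution bound underlying Theorem \ref{theorem:first}, with the crucial observation that when $X_i$ and $T_i$ share the same covariance $\Sigma$, the leading Gaussian term and the $d^2$-type remainder pieces are \emph{identical} for both and cancel in the difference — only the third-order polynomial discrepancy $P_3$ built from $\E(\Sigma^{-1/2}X_1)^{\otimes 3}-\E(\Sigma^{-1/2}T_1)^{\otimes 3}$ survives at order $n^{-1/2}$, which is exactly the term $(\sqrt6\beta^3)^{-1}\bar R_{3,T}n^{-1/2}$. The factor $\sqrt8$ (versus $2$) and the appearance of $\bar V_4=\E\|\Sigma^{-1/2}X_1\|^4+\E\|\Sigma^{-1/2}T_1\|^4$ and $2d^2+4d$ reflect summing the two Berry--Esseen-type remainders and using $(a+b)^{1/2}\le a^{1/2}+b^{1/2}$ crudely, or rather $\sqrt{2}(\sqrt a+\sqrt b)\ge \sqrt{a}+\sqrt b$ combined with $\sqrt{a+b}\le\sqrt 2\max$. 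The bound $|\bar R_{3,T}|\le\|\cdot\|_{\mathrm F}\le\|\cdot\|\,d$ and the sparsity refinement $m_{3,T}\sqrt{N_T}$ follow verbatim from the corresponding estimate for $R_3$ in Theorem \ref{theorem:first}, since $\bar R_{3,T}$ is defined by the same sublinear functional applied to the difference tensor.

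For the second case ($\Sigma\neq\SigmaT$): I would insert an intermediate random vector. Let $Z_\Sigma\sim\mathcal N(0,\Sigma)$ and $Z_{\SigmaT}\sim\mathcal N(0,\SigmaT)$, and write
\[
\Delta_{\BCl}(S_n,S_{T,n})\le \Delta_{\BCl}(S_n,Z_\Sigma)+\Delta_{\BCl}(Z_\Sigma,Z_{\SigmaT})+\Delta_{\BCl}(Z_{\SigmaT},S_{T,n}).
\]
The first and third terms are each handled by Theorem \ref{theorem:first} applied to $X_i$ (resp.\ $T_i$); here the standardized fourth moments get replaced by unstandardized ones at the cost of powers of $\lmin^{-1}$, namely $\E\|\Sigma^{-1/2}X_1\|^4\le\lmin^{-4}\E\|X_1\|^4$, $\|\Sigma^{-1}\|\le\lmin^{-2}$, $\|\Sigma\|\le v_4^{1/2}$ roughly, which produces the $\lmin^{-2}$, $\lmin^{-4}$ prefactors and the $v_4$, $V_4$ quantities. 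The middle term $\Delta_{\BCl}(Z_\Sigma,Z_{\SigmaT})$ is a purely Gaussian comparison: I would bound it by a smooth-function / interpolation argument (Gaussian interpolation $\Sigma_t=(1-t)\SigmaT+t\Sigma$, differentiate in $t$, and integrate the second-derivative bound against an anti-concentration estimate for balls — the latter available from Section \ref{sect:cited}), yielding something of order $\lmin^{-2}\|\Sigma-\SigmaT\|_{\mathrm F}$; the constant $(\sqrt2\beta^2\lmin^2)^{-1}$ and the $\beta$-dependence come from using the same mollification parameter as in the rest of the proof so the pieces combine. The third-order discrepancy term $R_{3,T}$ is then built from $\E(X_1^{\otimes3})-\E(T_1^{\otimes3})$ after the $\lmin^{-3}$ rescaling, matching the stated bound.

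The main obstacle I anticipate is \textbf{bookkeeping the cancellation cleanly in case 1} and \textbf{the Gaussian-to-Gaussian comparison over balls in case 2}. For case 1 the delicate point is that one cannot literally "subtract two Edgeworth expansions" at the level of probabilities of balls unless the \emph{same} mollified test function and the \emph{same} smoothing scale are used on both sides, so the argument must be organized so that $G_n$ is a single fixed signed measure against which both $S_n$ and $S_{T,n}$ are compared, with all $O(d^2/n)$ remainder terms tracked with matching constants — this is where the factor-of-$2$-to-$\sqrt8$ inflation is unavoidable. For case 2, a naive bound on $\Delta_{\BCl}(Z_\Sigma,Z_{\SigmaT})$ would pick up a dimension factor; getting the clean $\|\Sigma-\SigmaT\|_{\mathrm F}$ (Frobenius, no extra $d$) requires pairing the Hessian of the ball indicator's mollification — which has a trace structure — against the increment $\Sigma-\SigmaT$ via the Cauchy--Schwarz inequality in the right (Frobenius) pairing, and then invoking the sharp ball anti-concentration bound from Section \ref{sect:cited} to kill the smoothing error. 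Once these two points are set up, the rest is the same optimization over $\beta$ and the same sublinearity/sparsity estimates for $R_{3,T}$ already proved for $R_3$.
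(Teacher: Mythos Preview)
Your plan has a structural gap that affects both cases. In case~2 you split through two \emph{different} Gaussian intermediaries $Z_\Sigma$ and $Z_{\SigmaT}$ and apply Theorem~\ref{theorem:first} to $\Delta_{\BCl}(S_n,Z_\Sigma)$ and $\Delta_{\BCl}(Z_{\SigmaT},S_{T,n})$ separately. That produces two third-order contributions $R_3(X)$ and $R_3(T)$, one from each application, and there is no mechanism to combine them into the single $R_{3,T}$ depending on the \emph{difference} $\E(X_1^{\otimes 3})-\E(T_1^{\otimes 3})$: each $R_3$ is a supremum over balls, so the triangle inequality only adds the two bounds. The same issue undermines your case-1 plan via a fixed signed measure $G_n$; your claim that ``the $d^2$-type remainder pieces are identical for both and cancel in the difference'' cannot be realized by summing two upper bounds on $\Delta_{\BCl}$.

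The paper's route is different and sidesteps this. It introduces two auxiliary sums $\St_n=n^{-1/2}\sum_i Y_i$ and $\St_{T,n}=n^{-1/2}\sum_i Y_{T,i}$ with $Y_i=Z_i+U_i$, $Y_{T,i}=Z_i'+U_{T,i}$, where the Gaussian parts $Z_i,Z_i'$ have the \emph{same} covariance ($\beta^2\Sigma$ in case~1, $\beta^2\lmin^2 I_d$ in case~2), and $U_i,U_{T,i}$ are built (see \eqref{def:Ui_T}) so that $Y_i,Y_{T,i}$ match the first three moments of $X_i,T_i$ respectively. One then writes
\[
\Delta_{\BCl}(S_n,S_{T,n})\le \Delta_{\BCl}(S_n,\St_n)+\Delta_{\BCl}(S_{T,n},\St_{T,n})+\Delta_{\BCl}(\St_n,\St_{T,n}),
\]
bounds the first two terms by Theorem~\ref{theorem:BE16} (this is where the $C_{B,4}$ pieces come from), and handles the last term directly. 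Because $\St_n$ and $\St_{T,n}$ share a common Gaussian component, both probabilities over a ball $B$ can be written as $\E\int_{B'}\phid(t-\sum_l\Ut_l)\,dt$ and $\E\int_{B'}\phid(t-\sum_l\Ut_{T,l})\,dt$ with the same smooth kernel, so a single Lindeberg swap $\Ut_l\leftrightarrow\Ut_{T,l}$ followed by Taylor expansion yields, at order $j$, the moment \emph{difference} $\E\Ut_l^{\otimes j}-\E\Ut_{T,l}^{\otimes j}$. The order-$2$ term gives $(\sqrt{2}\beta^2\lmin^2)^{-1}\|\Sigma-\SigmaT\|_{\mathrm F}$ via Lemma~\ref{lemma:symm}, the order-$3$ term gives $R_{3,T}$, and the order-$4$ remainder gives the $n^{-1}$ piece---all from one expansion. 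No separate Gaussian-to-Gaussian interpolation $\Delta_{\BCl}(Z_\Sigma,Z_{\SigmaT})$ is ever performed; the covariance-difference term is simply the $j=2$ coefficient in the same telescoping sum that produces $R_{3,T}$ at $j=3$. This shared-Gaussian-component construction is the idea missing from your plan.
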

Below we consider the uniform distance between the probability distributions of  $S_{n}$ and $\Zsigma$ over the set of all half-spaces in $\R^{d}$:
\begin{align}
\label{def:distH}
 \Delta_{\HCl}(S_{n},\Zsigma)&\define \sup\limits_{ x\in\R, \gamma \in\R^{d}}\left|\P( \gamma^{T}S_{n} \leq x)-\P(\gamma^{T}\Zsigma\leq x )\right|.
\end{align}
Denote $h_{3}(\beta)\define3\beta^{-4}\{1-(1-\beta^{2})^{2}\}$ for $\beta\in(0,1)$ (similarly to $h_{1},h_{2}$ introduced in \eqref{def:h1h2}).
\begin{theorem}
\label{theorem:first_HS}
Given the conditions of Theorem \ref{theorem:first}, it holds $\forall\beta\in(0,1)$
\begin{align*}
 \Delta_{\HCl}(S_{n},\Zsigma)
&\leq 
(\sqrt{6}\beta^{3})^{-1} \| \E (\Sigma^{-1/2} X_{1})^{\otimes 3}\| n^{-1/2} 
\\
&\quad +C_{H,4} \bigl\{ (h_{1}(\beta)+\beta^{-4})\| \E (\Sigma^{-1/2} X_{1})^{\otimes 4}\|+ h_{3}(\beta)
\bigr\}^{1/2}n^{-1/2}
\\&\quad
 +  
   (2\sqrt{6})^{-1}\{
h_{1}(\beta)\| \E (\Sigma^{-1/2} X_{1})^{\otimes 4}\| +3h_{2}(\beta)
 \}n^{-1},
\end{align*}
where $C_{H,4}\geq 9.5$ is a constant independent from $d,n$, and a probability distribution of $X_{i}$ (see the definition of $C_{H,4}$ in the proof after formula \eqref{ineq:Delta_th_2}).
\end{theorem}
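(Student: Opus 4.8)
\emph{Strategy.} Unlike Euclidean balls, a half-space in $\R^{d}$ is determined by a single linear functional, so $\Delta_{\HCl}(S_{n},\Zsigma)$ reduces to a one-dimensional Kolmogorov distance taken uniformly over projection directions. The plan is (i) to carry out this reduction, (ii) to run, in dimension one and for the half-line indicator, the smoothing/Taylor-expansion scheme behind Theorem \ref{theorem:first} (outlined in Section \ref{sect:proof_contrib}), which in $\R^{1}$ produces no dimension-dependent terms, and (iii) to optimize over the direction. For $\gamma\neq0$ put $v\define\Sigma^{1/2}\gamma$, $u\define v/\|v\|$; with $W\define\Sigma^{-1/2}S_{n}=n^{-1/2}\sum_{i}\Sigma^{-1/2}X_{i}$ and $\tilde Z\define\Sigma^{-1/2}\Zsigma\sim\mathcal{N}(0,\Id_{d})$ one has $\gamma^{T}S_{n}=\|v\|\,u^{T}W$ and $\gamma^{T}\Zsigma=\|v\|\,u^{T}\tilde Z$, so $\{\gamma^{T}S_{n}\le x\}=\{u^{T}W\le x/\|v\|\}$ and likewise for $\Zsigma$. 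As $\Sigma$ is p.d., $u$ runs over the entire unit sphere when $\gamma$ runs over $\R^{d}\setminus\{0\}$, hence
\begin{equation*}
\Delta_{\HCl}(S_{n},\Zsigma)=\sup_{\|u\|=1}\ \sup_{t\in\R}\bigl|\P(u^{T}W\le t)-\P(u^{T}\tilde Z\le t)\bigr|.
\end{equation*}

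\emph{One-dimensional expansion.} Fix a unit vector $u$ and set $\xi_{i}\define u^{T}\Sigma^{-1/2}X_{i}$: i.i.d.\ real variables with $\E\xi_{i}=0$, $\E\xi_{i}^{2}=1$, $\E\xi_{i}^{3}=\inner{\E(\Sigma^{-1/2}X_{1})^{\otimes3}}{u^{\otimes3}}=:a_{3,u}$ and $\E\xi_{i}^{4}=\inner{\E(\Sigma^{-1/2}X_{1})^{\otimes4}}{u^{\otimes4}}=:a_{4,u}\ge0$, finite because $\E|X_{i}^{\otimes4}|<\infty$; moreover $u^{T}W=n^{-1/2}\sum_{i}\xi_{i}$ and $u^{T}\tilde Z\sim\mathcal{N}(0,1)$, and by definition of the operator norm $|a_{3,u}|\le\|\E(\Sigma^{-1/2}X_{1})^{\otimes3}\|$, $0\le a_{4,u}\le\|\E(\Sigma^{-1/2}X_{1})^{\otimes4}\|$. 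Now apply the scheme of Theorem \ref{theorem:first} with $d=1$: approximate $\Ind\{\,\cdot\le t\}$ by a Gaussian-smoothed surrogate whose smoothing scale is governed by $\beta\in(0,1)$, Taylor-expand $\E$ of the surrogate at $n^{-1/2}\sum_{i}\xi_{i}$ to third order (using independence and the common law of the $\xi_{i}$), and compare with the standard normal: the first two moments agree, the third moment of the sum is $a_{3,u}n^{-1/2}$ against $0$, giving the leading term $(\sqrt6\beta^{3})^{-1}|a_{3,u}|n^{-1/2}$ (the $\sqrt{3!}=\sqrt6$ from the third-order term, $\beta^{-3}$ from the rescaled kernel); the fourth-order remainder gives the $n^{-1}$ term $(2\sqrt6)^{-1}\{h_{1}(\beta)a_{4,u}+3h_{2}(\beta)\}$; and the cost of de-smoothing — a probability of $u^{T}W$ (resp.\ a standard normal) lying in a thin slab — is controlled by Gaussian anti-concentration times a crude one-dimensional Berry--Esseen bound of order $\sqrt{a_{4,u}}\,n^{-1/2}$ (using $\E|\xi_{1}|^{3}\le(\E\xi_{1}^{4})^{1/2}=\sqrt{a_{4,u}}$), yielding after balancing the slab width the middle term $C_{H,4}\{(h_{1}(\beta)+\beta^{-4})a_{4,u}+h_{3}(\beta)\}^{1/2}n^{-1/2}$ with $C_{H,4}\ge9.5$ collecting the anti-concentration and Berry--Esseen constants.

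\emph{Conclusion.} Summing the three contributions bounds $\sup_{t}|\P(u^{T}W\le t)-\P(u^{T}\tilde Z\le t)|$ by the right-hand side of the theorem with $\|\E(\Sigma^{-1/2}X_{1})^{\otimes3}\|$ and $\|\E(\Sigma^{-1/2}X_{1})^{\otimes4}\|$ replaced by $|a_{3,u}|$ and $a_{4,u}$. Every term there is nondecreasing in $|a_{3,u}|$ and in $a_{4,u}\ge0$, so the operator-norm bounds from the previous paragraph permit taking $\sup_{\|u\|=1}$ and recover exactly the stated inequality, for all $\beta\in(0,1)$.

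\emph{Main obstacle.} The reduction and the optimization over $u$ are routine; the work is the one-dimensional expansion, executed with fully explicit constants so that $h_{1},h_{2},h_{3}$ (note $h_{3}(\beta)=6\beta^{-2}-3$) and $C_{H,4}$ come out exactly as claimed. The delicate points are: choosing the smoothing so that the third-order (cumulant) term is isolated while the fourth-order remainder stays controlled by $a_{4,u}$ with the correct $\beta$-weights, and pairing the sharp one-dimensional anti-concentration constant with a Berry--Esseen estimate of the right order so that the square-root middle term carries constant $C_{H,4}$ rather than something cruder. Here the leading $n^{-1/2}$ cumulant term and the $n^{-1}$ remainder coincide with the $d=1$ case of Theorem \ref{theorem:first}, while the middle term genuinely reflects the half-line (rather than interval) geometry — this is why it is stated separately — and all the $d^{2}+2d$ terms of Theorem \ref{theorem:first} disappear.
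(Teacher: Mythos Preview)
Your high-level plan---reduce to a one-dimensional projection and then run the Theorem~\ref{theorem:first} machinery in $d=1$---is exactly what the paper does, and your reduction via $\xi_i = u^{T}\Sigma^{-1/2}X_i$ together with the bounds $|a_{3,u}|\le\|\E(\Sigma^{-1/2}X_1)^{\otimes3}\|$, $a_{4,u}\le\|\E(\Sigma^{-1/2}X_1)^{\otimes4}\|$ is correct. However, your description of what that machinery \emph{is} does not match the paper's technique and, as written, would not produce the stated constants.

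The paper does not ``smooth the indicator and then de-smooth.'' Instead (Section~\ref{sect:proof_contrib} and the proof of Theorem~\ref{theorem:first}) it introduces the auxiliary sum $\tilde S_n = n^{-1/2}\sum_i Y_i$ with $Y_i = Z_i + \alpha_i\tilde X_i$, where $Z_i\sim\mathcal N(0,\beta^{2}\Sigma)$ and the scalar $\alpha_i$ satisfies $\E\alpha_i=0$, $\E\alpha_i^{2}=1-\beta^{2}$, $\E\alpha_i^{3}=1$, $\E\alpha_i^{4}=(1-\beta^{2})^{2}+(1-\beta^{2})^{-1}$, and then uses the triangle inequality $\Delta_\HCl(S_n,\Zsigma)\le\Delta_\HCl(S_n,\tilde S_n)+\Delta_\HCl(\tilde S_n,\Zsigma)$. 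For the second piece the built-in Gaussian component $Z_i$ makes $\P(\gamma^{T}\tilde S_n\le x)$ a genuinely smooth function of the remaining randomness---no de-smoothing step is ever needed---and the telescoping/Taylor argument (against the one-dimensional density $\varphi_1$) delivers the first and third lines of the bound, with $h_1,h_2$ arising directly from $\E\alpha_i^{4}$. The middle term is $\Delta_\HCl(S_n,\tilde S_n)$, bounded via the higher-order Berry--Esseen inequality of Theorem~\ref{theorem:BE16} with $K=4$, which exploits that $X_i$ and $Y_i$ share \emph{three} moments; the quantity under the square root is $\E(\gamma^{T}\Sigma^{-1/2}X_1)^{4}+\E(\gamma^{T}\Sigma^{-1/2}Y_1)^{4}$, and expanding the second summand via the moments of $\alpha_i$ is precisely what produces $(h_1(\beta)+\beta^{-4})\|\E(\Sigma^{-1/2}X_1)^{\otimes4}\|+h_3(\beta)$. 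Your proposed route---classical one-dimensional Berry--Esseen (two matching moments only) combined with Gaussian anti-concentration to pay for a de-smoothing slab---would yield \emph{some} $n^{-1/2}$ term, but not this specific square-root structure nor the weights $h_1,h_2,h_3$; those are artifacts of the $\alpha_i$ construction, which your sketch omits entirely.
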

\begin{corollary}
\label{corollary:theorem_H1}
Let $\beta=0.829$, close to the local minimum of $h_{1}(\beta)$, then
\begin{align*}
 \Delta_{\HCl}(S_{n},\Zsigma)
&\leq
  0.717\| \E (\Sigma^{-1/2} X_{1})^{\otimes 3}\| n^{-1/2} 
\\&\quad+ 
C_{H,4}\bigl\{  
9.10\| \E (\Sigma^{-1/2} X_{1})^{\otimes 4}\| +5.731 \bigr\}^{1/2}n^{-1/2}
\\&\quad+  
   \{
1.425\| \E (\Sigma^{-1/2} X_{1})^{\otimes 4}\| +0.127%
 \}n^{-1}.
\end{align*}
\end{corollary}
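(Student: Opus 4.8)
The plan is to derive Corollary~\ref{corollary:theorem_H1} as an immediate specialization of Theorem~\ref{theorem:first_HS}. Since that theorem is valid for \emph{every} $\beta\in(0,1)$, it suffices to set $\beta=0.829$ and to bound from above the finitely many numerical coefficients it produces, namely $(\sqrt6\,\beta^{3})^{-1}$, $h_{1}(\beta)+\beta^{-4}$, $h_{3}(\beta)$, $(2\sqrt6)^{-1}h_{1}(\beta)$ and $(2\sqrt6)^{-1}\cdot 3h_{2}(\beta)$, by the rounded constants appearing in the statement.

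Concretely, I would first record the elementary quantities at $\beta=0.829$: $\beta^{2}=0.687241$, $1-\beta^{2}=0.312759$, $\beta^{4}\approx0.472301$, hence $\beta^{-4}\approx2.1173$ and $(\sqrt6\,\beta^{3})^{-1}\approx0.71657\le0.717$. From the definitions \eqref{def:h1h2} this gives $h_{2}(\beta)=(1-\beta^{2})^{2}\beta^{-4}\approx0.20711$ and $h_{1}(\beta)=h_{2}(\beta)+(1-\beta^{2})^{-1}\beta^{-4}\approx0.20711+6.7697\approx6.9768$, and from $h_{3}(\beta)=3\beta^{-4}\{1-(1-\beta^{2})^{2}\}$ one gets $h_{3}(\beta)\approx5.7307\le5.731$. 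Substituting these into the three terms of Theorem~\ref{theorem:first_HS}: the leading term has coefficient $(\sqrt6\,\beta^{3})^{-1}\le0.717$; the square-root term has coefficient $C_{H,4}\{(h_{1}(\beta)+\beta^{-4})\|\E(\Sigma^{-1/2}X_{1})^{\otimes4}\|+h_{3}(\beta)\}^{1/2}$ with $h_{1}(\beta)+\beta^{-4}\approx9.0941\le9.10$ and $h_{3}(\beta)\le5.731$; and the $n^{-1}$ term has coefficient $(2\sqrt6)^{-1}h_{1}(\beta)\approx1.4240\le1.425$ in front of $\|\E(\Sigma^{-1/2}X_{1})^{\otimes4}\|$ and $(2\sqrt6)^{-1}\cdot 3h_{2}(\beta)\approx0.12683\le0.127$ for the free term. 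Collecting these yields precisely the displayed inequality.

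There is no real obstacle here; the only point requiring mild care is controlling the direction of the decimal rounding, so that each constant quoted in the statement is genuinely an upper bound for the exact expression it replaces. The parenthetical assertion that $\beta=0.829$ is ``close to the local minimum of $h_{1}$'' is not needed for the inequality itself; if one wishes to justify it, it is enough to observe that $h_{1}'(\beta)$ changes sign near $\beta=0.829$ (e.g.\ by evaluating $h_{1}$ at $0.82$, $0.829$, $0.84$ and comparing), which also explains why this particular value is chosen.
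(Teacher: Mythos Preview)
Your proposal is correct and is exactly the approach the paper takes: Corollary~\ref{corollary:theorem_H1} is stated immediately after Theorem~\ref{theorem:first_HS} without a separate proof, being understood as the direct specialization $\beta=0.829$ with the resulting numerical coefficients rounded up. Your arithmetic checks are accurate, and the care you take with the rounding direction is the only point of substance.
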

\begin{remark}
The inequalities that we establish for the class $\HCl$ are \emph{dimension-free}. Indeed, the approximation errors in Theorems \ref{theorem:first_HS}, \ref{theorem:HS_var} and Corollary \ref{corollary:theorem_H1} depend only on numerical constants, on $n^{-1/2}, n^{-1}$, and on the operator norms of the 3-d and the 4-th moments of $\Sigma^{-1/2} X_{1}$: 
$$\| \E (\Sigma^{-1/2} X_{1})^{\otimes j}\| = \sup\limits_{\gamma\in \R^{d}, \|\gamma\|=1} \E (\gamma^{T} \Sigma^{-1/2} X_{1})^{j}\quad \text{for }j=3,4.$$
\end{remark}
\begin{remark}
\label{remark:HS_4}
Recalling the arguments in Remark \ref{remark:BE2}, $\|\E (\Sigma^{-1/2} X_{1})^{\otimes 3}\|$ in the latter statement depends on the third moment of $X_{1}$ sublinearly.  Furthermore, the classical Berry--Esseen theorem  by \citet{Berry1941accuracy} and \citet{Esseen1942Liapounoff} (that requires $\E|X_{i}^{\otimes 3}|<\infty$) gives an error term $\leq c \| \E (\Sigma^{-1/2} X_{1})^{\otimes 4}\|^{3/4}n^{-1/2}$ which is $\geq \sqrt{\| \E (\Sigma^{-1/2} X_{1})^{\otimes 4}\|/n}$ because $ \| \E (\Sigma^{-1/2} X_{1})^{\otimes 4}\|\geq 1$. This justifies that Theorem \ref{theorem:first_HS} can have a better accuracy than the result for $ \Delta_{\HCl}$ implied by the classical Berry--Esseen inequality when, for example, $\E X_{1}^{\otimes 3}=0$ and $\| \E (\Sigma^{-1/2} X_{1})^{\otimes 4}\|$ is rather big  (e.g., for the logistic or von Mises distributions).
\end{remark}

The following statement extends Theorem \ref{theorem:first_HS} to the case of a general (not necessarily normal) approximating distribution, similarly to Theorem \ref{theorem:multivar_gen}. Recall that $v_{4}= \|\Sigmax\|^{2}+\|\SigmaT\|^{2}$ and denote $$\bar{V}_{T,4}\define\| \E (\Sigma^{-1/2} X_{1})^{\otimes 4}\|+\| \E (\Sigma^{-1/2} T_{1})^{\otimes 4}\|,\quad  {V}_{T,4}\define\| \E (X_{1}^{\otimes 4})\|+\| \E (T_{1}^{\otimes 4})\|.$$ 
\begin{theorem}
\label{theorem:HS_var}Given the conditions of Theorem \ref{theorem:multivar_gen}, it holds $\forall \beta\in(0,1)$
\begin{align*}
 \Delta_{\HCl}(S_{n},S_{T,n})
&\leq
(\sqrt{6}\beta^{3})^{-1} \|\E (\Sigma^{-1/2} X_{1})^{\otimes 3} -\E (\Sigma^{-1/2} T_{1})^{\otimes 3}\| n^{-1/2} 
\\&\quad+ 
C_{H,4}\bigl\{  
(h_{1}(\beta)+\beta^{-4})\bar{V}_{T,4}+ 2h_{3}(\beta)
\bigr\}^{1/2}n^{-1/2}
\\&\quad+  
   (2\sqrt{6})^{-1}
h_{1}(\beta)\bar{V}_{T,4}n^{-1}.
\end{align*}

Consider the case when $\Var X_{i}=\Sigma$ and $\Var T_{i}=\SigmaT$ are not necessarily equal to each other. Let  $\lmin^{2}>0$ denote the minimum of the smallest eigenvalues of $\Sigma$ and $\SigmaT$.  It holds $\forall\beta\in(0,1)$
\begin{align*}
\Delta_{\HCl}(S_{n},S_{T,n})
&\leq (\sqrt{2}\beta^{2}\lmin^{2})^{-1}\|\Sigmax-\SigmaT\|
\\&\quad+(\sqrt{6}\beta^{3}\lmin^{3})^{-1}\|\E (X_{1}^{\otimes 3})-\E (T_{1}^{\otimes 3})\|n^{-1/2}
\\
\nonumber
&  
\quad
+  4\sqrt{2}C_{H,4}\lmin^{-2}\bigl\{  
h_{1}(\beta)V_{T,4}
+3(v_{4}+1/2)
\bigr\}^{1/2}n^{-1/2}
\\&\quad+   2(\sqrt{6}\lmin^{4})^{-1}\bigl\{h_{1}(\beta) V_{T,4} +3v_{4} \bigr\}n^{-1}.
\end{align*}
\end{theorem}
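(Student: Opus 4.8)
The plan is to run the argument that takes Theorem~\ref{theorem:first_HS} to a general approximating summand, in exact parallel with the way Theorem~\ref{theorem:multivar_gen} generalizes Theorem~\ref{theorem:first}. The feature special to the class $\HCl$ is that the whole problem collapses to one dimension: for a unit $\gamma\in\R^d$ and $x\in\R$, $\P(\gamma^T S_n\le x)$ only involves the scalar i.i.d.\ sum $n^{-1/2}\sum_i\gamma^TX_i$, so every moment tensor that shows up is contracted only against $\gamma^{\otimes k}$ and hence enters through its \emph{operator} norm, with no $d$-dependence. First I would treat the equal-covariance case: put $\bar X_i\define\Sigma^{-1/2}X_i$, $\bar T_i\define\Sigma^{-1/2}T_i$; since $z\mapsto\Sigma^{-1/2}z$ is a bijection of $\HCl$ onto itself, $\Delta_{\HCl}(S_n,S_{T,n})=\Delta_{\HCl}(\bar S_n,\bar S_{T,n})$ and now $\Var\bar X_i=\Var\bar T_i=\Id_d$.

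Next I would introduce the smoothing exactly as in Section~\ref{sect:proof_contrib}: split the index set into blocks of sizes $\approx(1-\beta^2)n$ and $\approx\beta^2 n$, so that $\bar S_n\overset{d}{=}\sqrt{1-\beta^2}\,A_X+\beta\,B_X$ with $A_X,B_X$ independent normalized i.i.d.\ sums, and likewise $\bar S_{T,n}$ with $A_T,B_T$. Conditioning on the $A$-block and replacing $B_X,B_T$ by $\mathcal N(0,\Id_d)$ at a cost controlled by the cited Berry--Esseen inequalities of Section~\ref{sect:cited}, the functional $\P(\gamma^T\bar S_n\le x)$ becomes, up to that cost, $\E\,\Phi^\beta_{\gamma,x}(\sqrt{1-\beta^2}\,A_X)$, where $\Phi^\beta_{\gamma,x}=\Ind_{\{\gamma^Tz\le x\}}*\mathcal N(0,\beta^2\Id_d)$ has $\|(\Phi^\beta_{\gamma,x})^{(s)}\|=O(\beta^{-s})$ and, being a function of $\gamma^Tz$ alone, has rank-one derivative tensors $(\Phi^\beta_{\gamma,x})^{(s)}(z)=g^{(s)}(\gamma^Tz)\,\gamma^{\otimes s}$.

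Then I would compare $\E\Phi^\beta(\sqrt{1-\beta^2}A_X)$ with $\E\Phi^\beta(\sqrt{1-\beta^2}A_T)$ by a Lindeberg telescoping of the $A$-summands $\bar X_i\to\bar T_i$, one at a time; doing this paired comparison \emph{directly} (rather than routing through a Gaussian) is what makes the leading term carry the \emph{difference} $\E\bar X_1^{\otimes3}-\E\bar T_1^{\otimes3}$ instead of a sum of two third moments. Since the first two moments match, the order $0,1,2$ increments cancel; the order-$3$ increment is a multiple of $n^{-3/2}\E[g^{(3)}(\cdot)]\langle\gamma^{\otimes3},\E\bar X_1^{\otimes3}-\E\bar T_1^{\otimes3}\rangle$ with $\|g^{(3)}\|=O(\beta^{-3})$, and summing the $\approx(1-\beta^2)n$ copies against the $\sqrt{1-\beta^2}$ rescaling yields $(\sqrt6\beta^3)^{-1}\|\E(\Sigma^{-1/2}X_1)^{\otimes3}-\E(\Sigma^{-1/2}T_1)^{\otimes3}\|n^{-1/2}$; the order-$4$ remainder, with $\|g^{(4)}\|=O(\beta^{-4})$ and the $(1-\beta^2)^2$ from the block rescaling, produces $(2\sqrt6)^{-1}h_1(\beta)\bar V_{T,4}n^{-1}$; and the Berry--Esseen cost of the $B$-replacement together with the cost of $A$ not being Gaussian inside $\Phi^\beta$ assembles into the middle term $C_{H,4}\{(h_1(\beta)+\beta^{-4})\bar V_{T,4}+2h_3(\beta)\}^{1/2}n^{-1/2}$, with the same $C_{H,4}$ as in Theorem~\ref{theorem:first_HS} and the factor-$2$/$h_3$ shape reflecting that both $X$- and $T$-sides are now general.

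Finally, for $\Sigma\ne\SigmaT$ one cannot whiten, so I would run the same split and swap on the raw $X_i,T_i$ at mollification scale $\asymp\beta\lmin$, each missing $\Sigma^{-1/2}$ producing a power of $\lmin^{-1}$ (hence the $\lmin^{-2},\lmin^{-3},\lmin^{-4}$ prefactors); the genuinely new term is the order-$2$ Lindeberg increment, which no longer cancels — its sum is $\tfrac12\E[g^{(2)}(\cdot)]\langle\gamma^{\otimes2},\Sigma-\SigmaT\rangle$ up to lower order and, by the rank-one structure and the bandwidth $\beta\lmin$, is bounded by $(\sqrt2\beta^2\lmin^2)^{-1}\|\Sigma-\SigmaT\|$, exactly the stand-alone leading term, with the rest coming out as before with $V_{T,4}$ and $v_4$ in place of the whitened quantities. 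The main obstacle will not be any single estimate but the quantitative bookkeeping: verifying that the mollifier derivative bounds $O(\beta^{-s})$, the block-rescaling factors $(1-\beta^2)$, and the cited Berry--Esseen constants recombine into precisely $h_1(\beta),h_2(\beta),h_3(\beta)$ and the already-fixed $C_{H,4}$; ensuring the mixed partial sums in the telescope obey a moment bound uniform in the swap position so the per-term estimates sum to the stated $n^{-1/2}$ and $n^{-1}$ rates; and, in the unequal-covariance case, placing every $\lmin^{-k}$ correctly while keeping the covariance-mismatch term in the operator norm $\|\Sigma-\SigmaT\|$ rather than a $d$-dependent norm.
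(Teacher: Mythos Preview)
Your high-level architecture (triangle inequality, introduce a Gaussian component for smoothing, Lindeberg swap on the remaining sum, exploit the rank-one structure over $\HCl$ so everything contracts to operator norms) is exactly the paper's, but the smoothing mechanism you describe is \emph{not} the one the paper uses, and this creates a genuine gap in the step that is supposed to produce the middle term with $C_{H,4}$.

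You propose to split the index set into two blocks and replace the $\beta$-block $B_X$ by a Gaussian. But the intermediate sum $\hat S_n\define\sqrt{1-\beta^2}\,A_X+\beta Z$ does not have the same third moment as $\bar S_n$: a direct computation gives $\E\hat S_n^{\otimes 3}=(1-\beta^2)\,n^{-1/2}\E\bar X_1^{\otimes 3}$ versus $\E\bar S_n^{\otimes 3}=n^{-1/2}\E\bar X_1^{\otimes 3}$. Consequently, when you appeal to ``the cited Berry--Esseen inequalities of Section~\ref{sect:cited}'' to control $\Delta_{\HCl}(\bar S_n,\hat S_n)$, Theorem~\ref{theorem:BE16} only applies with $K=3$, not $K=4$; you obtain a bound of order $\E|\gamma^T\bar X_1|^3\,(\beta^2 n)^{-1/2}$, not the stated $C_{H,4}\{(h_1(\beta)+\beta^{-4})\bar V_{T,4}+2h_3(\beta)\}^{1/2}n^{-1/2}$. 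Your claim that this cost ``assembles into the middle term'' cannot go through as written.

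What the paper actually does (and what Section~\ref{sect:proof_contrib} describes) is different: it constructs $\tilde S_n=n^{-1/2}\sum_i Y_i$ with $Y_i=Z_i+\alpha_i\tilde X_i$, where $Z_i\sim\mathcal N(0,\beta^2\Sigma)$ and the scalar $\alpha_i$ is chosen with $\E\alpha_i=0$, $\E\alpha_i^2=1-\beta^2$, $\E\alpha_i^3=1$ (and $\E\alpha_i^4=(1-\beta^2)^2+(1-\beta^2)^{-1}$, which is where $h_1,h_2$ come from). This makes $\E Y_i^{\otimes j}=\E X_i^{\otimes j}$ for $j=1,2,3$ \emph{and} gives $\tilde S_n$ a built-in Gaussian component of variance $\beta^2\Sigma$. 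The third-moment match is precisely what licenses the $K=4$ application of Theorem~\ref{theorem:BE16} to $\Delta_{\HCl}(S_n,\tilde S_n)$, yielding the $C_{H,4}\{\cdots\}^{1/2}n^{-1/2}$ term; the Gaussian component then drives the smoothing in the comparison $\Delta_{\HCl}(\tilde S_n,\tilde S_{T,n})$ (resp.\ $\Delta_{\HCl}(\tilde S_n,Z_\Sigma)$) via the 1-d integral representation and Lindeberg, as in the proof of Theorem~\ref{theorem:first_HS}. For the unequal-covariance case one uses $Z_i\sim\mathcal N(0,\beta^2\lmin^2\Id_d)$ and adds an extra Gaussian piece to $U_i$, $U_{T,i}$ to make up the covariance, exactly as in \eqref{def:Ui_T}. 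If you replace your block-split by this $\alpha_i$-construction, the rest of your outline (the 1-d reduction, the order-$2$ term carrying $\|\Sigma-\Sigma_T\|$, the operator-norm control throughout, the $\lmin^{-k}$ bookkeeping) is correct and matches the paper.
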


\section{New proof technique}
\label{sect:proof_contrib}

In this section we describe the key steps and ideas that we develop in the proofs of Theorems \ref{theorem:first} and \ref{theorem:multivar_gen} for the class $\BCl$. Theorems \ref{theorem:first_HS} and \ref{theorem:HS_var} about half-spaces are derived in an analogous way.

First we use the triangle inequality 
\begin{align}
\label{ineq: triangle_intro}
\Delta_{\BCl}(S_{n},\Zsigma)&\leq \Delta_{\BCl}(S_{n},\St_{n})+\Delta_{\BCl}(\St_{n},\Zsigma),
\end{align}
where $\St_{n}=n^{-1/2}\sum\nolimits_{i=1}^{n}Y_{i}$ for i.i.d. random summands $Y_{i}$ constructed in a special way (see definitions \eqref{def:Y_i}, \eqref{eq:Y_moments}). We define $Y_{i}$ such that 
they have matching moments of orders $1,2,3$ with the original random vectors $X_{i}$, and at the same time they have a normal component which plays crucial role in the smoothing technique that we describe below. 

 To the term $\Delta_{\BCl}(S_{n},\St_{n})$ in \eqref{ineq: triangle_intro} we apply the Berry--Esseen type inequality from \citep{Zhilova2020} (this result is discussed in more detail in Section \ref{sect:cited}, 
 which yields 
 \begin{align}
\nonumber
 \Delta_{\BCl}(S_{n},\St_{n})&
 \\&\hspace{-0.8cm}
 \leq2C_{B,4} \|\Sigma^{-1}\|\|\Sigma\|
\sqrt{
 \bigl\{  
(h_{1}(\beta)+0.25/\beta^{4})
\E\| \Sigma^{-1/2} X_{1}\|^{4}+d^{2}+2d
\bigr\}/n
}.
\label{ineq:BEbound}
\end{align}
 Here the error rate $C \sqrt{d^2/n}$ comes from the higher-order moment matching property between the random summands of $S_{n}$  and $\St_{n}$, which improves the ratio $C \sqrt{d^3/n}$ between dimension $d$ and sample size $n$ in the classical Berry--Esseen result by \cite{Bentkus2003BE} (in the classical Berry--Esseen theorem one uses, in general, only first two matching moments which is smaller than first three moments). 
Also the square root in this expression naturally comes from the smoothing argument used for derivation of the Berry--Esseen inequality with the best-known rate w.r.t. $d$ and $n$, and it is unavoidable for the distance $\Delta_{\BCl}(S_{n},\St_{n})$ under the mild conditions on $X_{i}$ imposed here. The proof of the result in \citep{Zhilova2020} is based on an extension of the proof of the Berry--Esseen inequaliy by \cite{Bentkus2003BE}. 

 For the term $\Delta_{\BCl}(\St_{n},\Zsigma)$ in \eqref{ineq: triangle_intro}, we exploit the structure of $\St_{n}$ in order to construct the higher-order expansion that allows to compare moments of $X_{i}$ and $\Zsigma$. It holds 
 \begin{equation}
 \St_{n} \overset{d}{=} \tilde{Z}+n^{-1/2}\sum\nolimits_{i=1}^{n}U_{i},
 \label{eq:gpart}
\end{equation}
  where $\tilde{Z} \sim \mathcal{N}(0,\beta^{2}  \Sigma)$ for $\beta\in (0,1)$ that enters the resulting bounds as a free parameter and can be used for optimizing the approximation error terms w.r.t. it. Random vectors $\{U_{i}\}_{i=1}^{n}$ are i.i.d. and independent from $\tilde{Z}$, and $\{X_{i}\}_{i=1}^{n}$, hence the expression in \eqref{eq:gpart} has multivariate normal distribution, conditionally on $\{U_{i}\}$.  Also 
 \begin{align}
 \label{eq:moments3}
 \E U_{i}= \E X_{i} =0, \quad \Var U_{i}= (1-\beta^{2}) \Sigma,  \quad \E U_{i}^{\otimes 3} = \E X_{i}^{\otimes 3}. 
 \end{align}
We introduce the following representation of the probability distribution functions of $\St_{n}$ and $\Zsigma$. Let $B\in \BCl$ and $B^{\prime}\define \{x\in \R^{d}: \beta\Sigma^{1/2} x\in B\}$, and $Z_{0}\define \beta^{-1} \Sigma^{-1/2} \tilde{Z} \sim \mathcal{N}(0,\Id_{d})$, then it holds
 \begin{align}
 \label{eq:gpart_2}
 \P(\St_{n} \in B) &= \E\bigl\{ \P\bigl(\tilde{Z}+n^{-1/2}\tsum_{i=1}^{n}U_{i}\in B \vert \{U_{i}\}_{i=1}^{n}\bigr)\bigr\}
 \\&= \E\bigl\{ \P\bigl(Z_{0}+n^{-1/2}\beta^{-1}\tsum_{i=1}^{n}\Sigma^{-1/2}U_{i}\in B^{\prime} \vert \{U_{i}\}_{i=1}^{n}\bigr)
 \bigr\}
\nonumber
\\&= \E \int\nolimits_{B^{\prime}}\varphi(t-n^{-1/2}\beta^{-1}\tsum_{i=1}^{n}\Sigma^{-1/2}U_{i}) dt,
\nonumber
\end{align}
for $\varphi(t)$ denoting the p.d.f. of $Z_{0}$. In this way, \emph{we use the normal component} of $\St_{n}$ to represent $\P(\St_{n} \in B)$ \emph{as an expectation of a smooth function} of the sum if i.i.d random vectors $n^{-1/2}\beta^{-1}\tsum_{i=1}^{n}\Sigma^{-1/2}U_{i}$ that have \emph{matching moments} with the original samples $X_{i}$. The same representation holds for the approximating distribution $\Zsigma$. Let $Z_{i}\sim \mathcal{N}(0, (1-\beta^{2})\Sigma)$ be i.i.d., independent from all other random vectors with the same first two moments as $U_{i}$, then
$\Zsigma\overset{d}{=}\tilde{Z}+n^{-1/2}\sum_{i=1}^{n}Z_{i}$,
 \begin{align}
  \label{eq:gpart_3}
  \P(\Zsigma \in B)&= \E\Bigl\{ \P\bigl(\tilde{Z}+n^{-1/2}\tsum_{i=1}^{n}Z_{i}\in B \vert \{Z_{i}\}_{i=1}^{n}\bigr)\Bigr\}
   \\&= \E\Bigl\{ \P\bigl(Z_{0}+n^{-1/2}\beta^{-1}\tsum_{i=1}^{n}\Sigma^{-1/2}Z_{i}\in B^{\prime} \vert \{Z_{i}\}_{i=1}^{n}\bigr)\Bigr\}
   \nonumber
   \\&= \E \int\nolimits_{B^{\prime}}\varphi(t-n^{-1/2}\beta^{-1}\tsum_{i=1}^{n}\Sigma^{-1/2}Z_{i}) dt.
\nonumber
\end{align}
Now we represent the difference 
$\P(\St_{n} \in B)-\P(\Zsigma \in B)$
as the following telescoping sum (the general telescopic sum principle or the swapping method is due to \cite{Lindeberg1922neue}, see also \cite{Trotter1959elementary} and \cite{Chatterjee2006generalization}):
 \begin{align}
 \nonumber
& \P(\St_{n} \in B)-\P(\Zsigma \in B)
\\&=\sum\nolimits_{i=1}^{n}\E \int\nolimits_{B^{\prime}}
\bigl\{\varphi(t-(n^{1/2}\beta\Sigma^{1/2})^{-1}U_{i}- s_{i})
 \label{tel_sum}
\\&\hspace{2.75cm} -\varphi(t-(n^{1/2}\beta\Sigma^{1/2})^{-1}Z_{i}- s_{i}) \bigr\} dt,
 \nonumber
 \end{align}
where $s_{i}=n^{-1/2}\beta^{-1}\sum\nolimits_{k=1}^{i-1}\Sigma^{-1/2} Z_{k}+n^{-1/2}\beta^{-1}\sum\nolimits_{k=i+1}^{n}\Sigma^{-1/2}U_{k}$ for $i=1,\dots,n$, the sums are taken equal zero if index $k$ runs beyond the specified range.  Random vectors $s_{i}$ are independent from $U_{i}$ and $Z_{i}$ which is used in the proof together with the Taylor expansion of $\varphi(t)$
and the matching moments property \eqref{eq:moments3}. Further details of the calculations are available in Section \ref{sect:proof_2}. 
The resulting error bound 
 \begin{align*}
\Delta_{\BCl}(\St_{n},\Zsigma)&
\leq (\sqrt{6}\beta^{3})^{-1}R_{3}n^{-1/2}
\\
   \nonumber
   &\quad+
   (2\sqrt{6})^{-1} \bigl\{
h_{1}(\beta) \E \|  \Sigma^{-1/2} X_{1}\|^4
   +h_2(\beta)(d^{2}+2d)\bigr\}n^{-1},
 \end{align*}
is fully explicit, nonasymptotic, and is analogous to the terms in the classical Edgeworth series. 

The proof of Theorem \ref{theorem:multivar_gen} uses an analogous approach. First we write the triangle inequality
\begin{align}
\label{ineq:triangle_intro_T}
\Delta_{\BCl}(S_{n},S_{T,n})&\leq \Delta_{\BCl}(S_{n},\St_{n})+ \Delta_{\BCl}(S_{T,n}, \tilde{S}_{T,n}) +\Delta_{\BCl}(\St_{n}, \tilde{S}_{T,n}),
\end{align}
where $\tilde{S}_{T,n}= n^{-1/2}\sum\nolimits_{i=1}^{n}Y_{T,i}$ is constructed similarly to the approximating sum $\St_{n}$ (see \eqref{eq:Stn_def} -- \eqref{def:Ui_T} for their explicit definitions). The terms  $\Delta_{\BCl}(S_{n},\St_{n})$, $\Delta_{\BCl}(S_{T,n}, \tilde{S}_{T,n})$ are bounded similarly to  $\Delta_{\BCl}(S_{n},\St_{n})$ in \eqref{ineq:BEbound}, and the term $\Delta_{\BCl}(\St_{n}, \tilde{S}_{T,n})$ is expanded in the same way as $\Delta_{\BCl}(\St_{n},\Zsigma)$ using the smooth normal components, the telescoping sum representations, and the Taylor series expansions.

Let us emphasize that the proposed proof technique is much more simple than many existing methods of deriving rates of convergence in the normal approximation. Furthermore, it is not restricted to the case when an approximation distribution is normal and it allows to obtain explicit error terms and constants under very mild conditions. To the best of our knowledge, this is a novel technique  and it has not been used in earlier literature.

\begin{remark}
I submitted the first version of this paper containing the new proof technique to \emph{The Annals of Statistics} on June 5, 2020 (submission number AOS2006-011). In about 3 months after that \cite{lopes2020} (see \url{https://arxiv.org/abs/2009.06004v1}) used a very similar approach and labeled it as \enquote{implicit Gaussian smoothing}. \cite{lopes2020} considered the problem of bounding the distance 
\begin{equation}
\label{dist_rect}
\Delta_{\mathscr{R}}(S_{n},Z_{\Sigma})=\sup\limits_{R\in \mathscr{R}}\abs{ \P(S_{n}\in R)-\P(Z_{\Sigma}\in R)},
\end{equation}
where $ \mathscr{R}$ is a set of all hyperrectagles in $\R^{d}$.  
  The approach used by \cite{lopes2020} has major similarities with the approach that we presented earlier in this section. These ideas play crucial role for my solution in the present paper as well as for the proofs in \cite{lopes2020}. Below we describe these similarities:
\begin{enumerate}[label={(\roman*)}]
\item 
\cite{lopes2020} uses the normal part of a random sum similarly to \eqref{eq:gpart} in order to represent its probability distribution of a sum of independent random vectors as an expected value of a smooth function (via the Gaussian distribution) similarly to \eqref{eq:gpart_2}, \eqref{eq:gpart_3}. 
\item The smooth function obtained via the Gaussian distribution in part (i) is expanded using the Taylor series as in \eqref{tel_sum} (see also  \eqref{eq:tel}--\eqref{eq:proof1}
 in Section \ref{sect:proof_2}; 
 \cite{lopes2020} uses the 2-nd order Taylor expansion with the remainder in the same form as here (see \eqref{eq:R4_in}, \eqref{eq:Taylor}). The Taylor expansion  is applied to the differences in the telescoping sum similarly to \eqref{tel_sum} and \eqref{eq:tel}--\eqref{eq:proof1}.
 
\item  The expansion in (ii) allows to apply the moment matching strategy as in \eqref{eq:moments_1}, \eqref{eq:moments_2}.
\item \cite{lopes2020} uses the induction technique based on the proof by \cite{Bentkus2003BE}, similarly to our approach for \eqref{ineq:BEbound}. 
\end{enumerate}
 I would like to emphasize that the approach presented here as well as the ideas from our 2016 paper \citep*{Zhilova2020} are new and first appeared in my work, previous to \cite{lopes2020}.
\end{remark}

\section{Approximation bounds for symmetric distributions and optimality of the error rate}
\label{sect:symm}
In this section we consider the case when the probability distribution of $X_{i}-\E X_{i}$ has some symmetry properties this  assumption can be formulated in terms of the condition  \eqref{condit:poly}  on moments of $X_{i}$.
Suppose for  i.i.d. $\R^{d}$-valued random vectors $\{X_{i}\}_{i=1}^{n}$ that $\E |X_{i}^{\otimes 6}|<\infty$ and their covariance matrix $\Sigma\define \Var (X_{i})$ is p.d. Without loss of generality, assume that $\E X_{i}=0$. Let $X=(x_{1},\dots, x_{d})$ be an i.i.d. copy of $X_{i}$, we assume that 
\begin{align}
\label{condit:poly}
\E p(x_{1},\dots,x_{d})=0
\end{align}
for any monomial $p: \R^{d}\mapsto \R$ that has degree $\leq 5$ and contains an odd power of $x_{j}$ for at least one  $j\in\{1,\dots,d\}$.  In addition, we assume that there exist a random vector $U_{L}$ in $\R^{d}$ with $\E |U_{L}^{\otimes 6}|<\infty$ and a p.d. covariance matrix $\Sigma_{L}\in \R^{d\times d}$ such that the following moment matching property holds for $Z_{\Sigma, L}\sim \mathcal{N}(0, \Sigma_{L})$ independent from $U_{L}$ and  $L\define Z_{\Sigma_{L}}+U_{L}$:
\begin{align}
\label{condit:L_moments}
 \E(L^{\otimes j})& = \E(X_{i}^{\otimes j})\quad \forall j \in \{1,\dots, 5\}.
\end{align}
We introduced this condition  in earlier paper \citep{Zhilova2020}; Lemmas 3.1, 3.2 in that paper show that under certain conditions on the support of $X_{i}$ there exists a probability distribution $\mathcal{L}(L)$ which complies with these conditions (see Lemma \ref{lemma:moment_conditions} in Section \ref{sect:cited} for further details). Also, because of property \eqref{condit:poly}, it is sufficient to assume that there exist only 6 finite absolute moments (instead of 7 finite absolute moments as stated in Lemma 3.1 in \citep{Zhilova2020} for the general case).
\begin{theorem}
\label{theorem:symm}
Let $\{X_{i}\}_{i=1}^{n}$ follow the conditions above, take $\lminz^{2}>0$ equal to the smallest eigenvalue of $\Sigma_{L}$, and  $\Zsigma\sim \mathcal{N}(0,\Sigma)$ in $\R^{d}$, then it holds %
\begin{align*} 
\Delta_{\BCl}(S_{n},\Zsigma)&
\leq C_{B,6}\bigl\{ 
\lminz^{-6} 
\E(\|X_{1}\|^{6}+\|L_{1}\|^{6})
\bigr\}^{1/4}n^{-1/2}
\\& \quad+
 (4!)^{-1/2}\lminz^{-4}\|\E (X_{1}^{\otimes 4}) - \E(\Zsigma^{\otimes 4}) \|_{\mathrm{F}}  n^{-1} 
 \\&\quad+  ({6!})^{-1/2}\lminz^{-6}\bigl\{  \E \|U_{L}\|^6+\E\|\Zsigma\|^{6}\bigr\}n^{-2},
\end{align*}
where  $C_{B,6}=2.9 C_{\ell_{2}}C_{\phi,6}\geq 2.9 $ is a constant independent from $d,n$, and probability distribution of $X_{i}$ (it is discussed in detail in Section \ref{sect:cited}). 
Let $m_{6,sym}$ denote the maximum of the 6-th moments of the coordinates of $X_{1},L_{1}, \Zsigma$, then the above inequality implies
\begin{align}
\nonumber 
\Delta_{\BCl}(S_{n},\Zsigma)&\leq  (4!)^{-1/2}\lminz^{-4}\|\E (X_{1}^{\otimes 4}) - \E(\Zsigma^{\otimes 4}) \|_{\mathrm{F}}  n^{-1} 
 \\
 \nonumber
 &\quad 
 +
 C_{B,6}(
\lminz^{-6} 
m_{6,sym}
)^{1/4}d^{3/4}n^{-1/2}+
 (6!)^{-1/2}(
\lminz^{-6} 
m_{6,sym}
)d^{3}n^{-2}
\\
\label{ineq:kappa4_sym}
&\leq   8^{-1/2} \lminz^{-4}\|\E (X_{1}^{\otimes 4}) - \E(\Zsigma^{\otimes 4})\|_{\max} d n^{-1} 
 \\
 \nonumber
 &\quad 
+C_{B,6}(
\lminz^{-6} 
m_{6,sym}
)^{1/4}d^{3/4}n^{-1/2}+(6!)^{-1/2}(
\lminz^{-6} 
m_{6,sym}
)d^{3}n^{-2}.
\end{align}
\end{theorem}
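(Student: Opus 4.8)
The plan is to follow the two-step strategy used for Theorem~\ref{theorem:first} and explained in Section~\ref{sect:proof_contrib}, but to push the moment matching and the Taylor expansion two orders further, exploiting that conditions \eqref{condit:poly}--\eqref{condit:L_moments} force the moment tensors of $X_{i}$ of orders $1,3,5$ to vanish (in particular, applying \eqref{condit:poly} to the monomial $x_{a}x_{b}$ with $a\neq b$ gives $\E(x_{a}x_{b})=0$, so $\Sigma$ is diagonal). Let $L_{i}$ be i.i.d.\ copies of $L=Z_{\Sigma_{L}}+U_{L}$ from \eqref{condit:L_moments} and $\St_{n}\define n^{-1/2}\tsum_{i=1}^{n}L_{i}$; since $\Var(L_{i})=\E(L^{\otimes 2})=\E(X_{i}^{\otimes 2})=\Sigma=\Sigma_{L}+\Var(U_{L})\succeq\Sigma_{L}$, the triangle inequality $\Delta_{\BCl}(S_{n},\Zsigma)\leq\Delta_{\BCl}(S_{n},\St_{n})+\Delta_{\BCl}(\St_{n},\Zsigma)$ is the point of departure.

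For $\Delta_{\BCl}(S_{n},\St_{n})$ I would apply the nonclassical Berry--Esseen inequality of \citep{Zhilova2020} collected in Section~\ref{sect:cited}: $X_{i}$ and $L_{i}$ share their moments of orders $1,\dots,5$ and have six finite moments, so it yields a bound of the claimed form $C_{B,6}\{\lminz^{-6}\E(\|X_{1}\|^{6}+\|L_{1}\|^{6})\}^{1/4}n^{-1/2}$ after using $\|\Sigma^{-1/2}\cdot\|\leq\lminz^{-1}\|\cdot\|$, which is legitimate because $\lambda_{\min}(\Sigma)\geq\lambda_{\min}(\Sigma_{L})=\lminz^{2}$. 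What has to be checked here is only that the hypotheses of that lemma hold --- existence of $L$ (Lemma~\ref{lemma:moment_conditions}, where thanks to \eqref{condit:poly} six finite absolute moments suffice), the covariance domination, and the sixth-moment finiteness --- and that its constant is $C_{B,6}=2.9\,C_{\ell_{2}}C_{\phi,6}$.

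For $\Delta_{\BCl}(\St_{n},\Zsigma)$ I would use the smoothing/telescoping device of Section~\ref{sect:proof_contrib} carried to the sixth order. Write $\St_{n}\overset{d}{=}\Zt+n^{-1/2}\tsum_{i}U_{L,i}$ with $\Zt\sim\mathcal{N}(0,\Sigma_{L})$, and $\Zsigma\overset{d}{=}\Zt+n^{-1/2}\tsum_{i}Z_{i}$ with $Z_{i}\sim\mathcal{N}(0,\Sigma-\Sigma_{L})$ i.i.d.; here $\Sigma-\Sigma_{L}=\Var(U_{L})$ is p.s.d., so $Z_{i}$ and $U_{L,i}$ have matching mean and covariance. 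For $B\in\BCl$, with $B^{\prime}\define\Sigma_{L}^{-1/2}B$ and $\varphi$ the $\mathcal{N}(0,\Id_{d})$ density, one has $\P(\St_{n}\in B)=\E\int_{B^{\prime}}\varphi(t-n^{-1/2}\tsum_{i}\Sigma_{L}^{-1/2}U_{L,i})\,dt$ and the same with $U_{L,i}$ replaced by $Z_{i}$ for $\Zsigma$. Writing the difference as the Lindeberg telescoping sum over the $n$ summands and Taylor-expanding $\varphi$ to degree $5$ with integral remainder: the degree-$0,1,2$ terms cancel by matching mean and covariance; the degree-$3$ and degree-$5$ terms cancel because $\E U_{L}^{\otimes j}=\E L^{\otimes j}=\E X_{i}^{\otimes j}=0$ for $j=3,5$ (by \eqref{condit:poly}) while the odd Gaussian moments vanish; and only the degree-$4$ term survives, where, using $\Sigma=\Sigma_{L}+\Var(U_{L})$ and Wick's formula, $\E U_{L}^{\otimes 4}-\E Z_{i}^{\otimes 4}=\E(X_{1}^{\otimes 4})-\E(\Zsigma^{\otimes 4})=:A_{0}$. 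Hence the degree-$4$ contribution is, for every $B\in\BCl$ and every value of the Lindeberg remainders, at most $\tfrac{1}{4!}n^{-1}\int_{\R^{d}}\varphi|P|$, where $P$ is the degree-$4$ polynomial obtained by contracting the symmetric tensor $(\Sigma_{L}^{-1/2})^{\otimes 4}A_{0}$ with the fourth Hermite tensor $\nabla^{\otimes 4}\varphi/\varphi$; symmetry of that tensor collapses all $4!$ pairings of the two Hermite tensors, giving $\int_{\R^{d}}\varphi P^{2}=4!\,\|(\Sigma_{L}^{-1/2})^{\otimes 4}A_{0}\|_{\mathrm{F}}^{2}$, so by Cauchy--Schwarz $\tfrac{1}{4!}\int_{\R^{d}}\varphi|P|\leq(4!)^{-1/2}\lminz^{-4}\|A_{0}\|_{\mathrm{F}}$ --- precisely the middle term of the theorem, times $n^{-1}$. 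The two degree-$6$ (integral-form) remainders are each $\leq\tfrac{1}{6!}\|h\|^{6}\int_{\R^{d}}|\partial_{1}^{6}\varphi|\leq(6!)^{-1/2}\|h\|^{6}$ (using $\int_{\R^{d}}|\partial_{1}^{6}\varphi|=\E|\mathrm{He}_{6}(\xi)|\leq(\E\,\mathrm{He}_{6}(\xi)^{2})^{1/2}=(6!)^{1/2}$, $\xi\sim\mathcal{N}(0,1)$), with $\|h\|\leq n^{-1/2}\lminz^{-1}\|U_{L,i}\|$ (resp.\ $\|Z_{i}\|$); summing over $i$ and using $\E\|Z_{1}\|^{6}\leq\E\|\Zsigma\|^{6}$ (Jensen, since $\Sigma-\Sigma_{L}\preceq\Sigma$) gives the last term $(\sqrt{6!})^{-1}\lminz^{-6}\{\E\|U_{L}\|^{6}+\E\|\Zsigma\|^{6}\}n^{-2}$.

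For the inequalities \eqref{ineq:kappa4_sym}, I would bound $\E\|X_{1}\|^{6}=\E(\tsum_{j}x_{j}^{2})^{3}\leq d^{3}m_{6,sym}$ by H\"older coordinatewise (and likewise $\E\|L_{1}\|^{6}$, $\E\|\Zsigma\|^{6}$, and $\E\|U_{L}\|^{6}\leq\E\|L\|^{6}$ by Jensen on $L=U_{L}+Z_{\Sigma_{L}}$), and estimate $\|A_{0}\|_{\mathrm{F}}\leq\sqrt{3}\,d\,\|A_{0}\|_{\max}$ using that, $\Sigma$ being diagonal, $A_{0}=\E(X_{1}^{\otimes 4})-\E(\Zsigma^{\otimes 4})$ has nonzero entries only at index quadruples whose multiset is $\{a,a,a,a\}$ or $\{a,a,b,b\}$, at most $3d^{2}$ of them, so that $(4!)^{-1/2}\sqrt{3}=8^{-1/2}$ gives the stated constant. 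The main obstacle is the precise bookkeeping of the higher-order telescoping expansion --- above all the tensor identity $A_{0}=\E U_{L}^{\otimes 4}-\E Z_{i}^{\otimes 4}$ and the collapse $\int_{\R^{d}}\varphi P^{2}=4!\|A_{0}\|_{\mathrm{F}}^{2}$ forced by symmetry, which is exactly what makes the $n^{-1}$ and $n^{-2}$ coefficients dimension-free --- together with verifying the hypotheses and the exact constant of the cited Berry--Esseen inequality.
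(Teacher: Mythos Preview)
Your proposal is correct and follows essentially the same route as the paper's proof: the triangle inequality via $\St_{n}=n^{-1/2}\sum_{i}L_{i}$, Theorem~\ref{theorem:BE16} with $K=6$ for $\Delta_{\BCl}(S_{n},\St_{n})$, and the Lindeberg telescoping with a fifth-order Taylor expansion (bounded via Lemma~\ref{lemma:symm}) for $\Delta_{\BCl}(\St_{n},\Zsigma)$, the paper's WLOG reduction $\Sigma_{L}=\lminz^{2}\Id_{d}$ playing the role of your $\Sigma_{L}^{-1/2}$-standardization. Your tensor identity $\E U_{L}^{\otimes 4}-\E Z_{i}^{\otimes 4}=\E(X_{1}^{\otimes 4})-\E(\Zsigma^{\otimes 4})$ and the $\leq 3d^{2}$ nonzero-entry count for $A_{0}$ (yielding the constant $8^{-1/2}$) make explicit steps that the paper leaves implicit.
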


Below we consider the example by  \citet{Portnoy1986central}  (Theorem 2.2 in \citep{Portnoy1986central}), using the notation in the present paper, and we derive a lower bound for $\Delta_{\BCl}(S_{n},\Zsigma)$ with the ratio between $d$ and $n$ similar to the error term in Theorem \ref{theorem:symm}. Proposition \ref{prop:example_d} and Lemma \ref{lemma:appl} imply that for $\{X_{i}\}_{i=1}^{n}$, $Z$  as in Theorem \ref{theorem:Portnoy}, and for sufficiently large $d,n$ 
$$ C d^{3/2}/n \leq \Delta_{\BCl}(S_{n},Z)  \leq  C(d^{3/2}/n)^{1/2}.$$
\begin{theorem}[\citet{Portnoy1986central}]
\label{theorem:Portnoy}
Let i.i.d.  random vectors $X_{i}$ have the following mixed normal distribution
\begin{align*}
X_{i}\rvert \{Z_{i}\} &\sim \mathcal{N}(0,Z_{i}Z_{i}^{T})\quad \text{for i.i.d. } Z_{i}\sim \mathcal{N}(0,\Id_{d}).
\end{align*}
Let also $S_{n}=n^{-1/2}\sum_{i=1}^{n} X_{i}$, $Z\sim \mathcal{N}(0,\Id_{d})$. If $d\to \infty$ such that $d/n\to 0$ as $n\to \infty$, then
\begin{gather*}
 \|S_{n}\|^{2}=\|Z\|^{2}  +D_{n},\quad
D_{n}=O_{p}(d^{2}/n).
\end{gather*}
\end{theorem}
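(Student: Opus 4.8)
The plan is to use the rank-one structure of the mixing matrices $Z_{i}Z_{i}^{T}$ to realize $S_{n}$ as a Gaussian quadratic form, couple $S_{n}$ to $Z$ through the conditional covariance, and then control the remainder by a second-moment computation. First I would note that, since $Z_{i}Z_{i}^{T}$ is a.s.\ of rank one, the conditional law $X_{i}\mid Z_{i}\sim\mathcal{N}(0,Z_{i}Z_{i}^{T})$ is realized by $X_{i}=g_{i}Z_{i}$ with $g_{i}\sim\mathcal{N}(0,1)$ i.i.d.\ and independent of $\{Z_{j}\}_{j=1}^{n}$. Consequently $S_{n}=n^{-1/2}\tsum_{i=1}^{n}g_{i}Z_{i}$, and conditionally on $\{Z_{i}\}_{i=1}^{n}$ the vector $S_{n}$ is centered Gaussian with covariance $\widehat{\Sigma}_{n}\define n^{-1}\tsum_{i=1}^{n}Z_{i}Z_{i}^{T}$. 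Since $d/n\to0$, for all large $n$ one has $n\geq d$, so $\widehat{\Sigma}_{n}$ is a.s.\ positive definite and $\widehat{\Sigma}_{n}^{-1/2}$ is well defined.

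Next I would construct the coupling by setting $Z\define\widehat{\Sigma}_{n}^{-1/2}S_{n}$. Conditionally on $\{Z_{i}\}_{i=1}^{n}$ this is $\mathcal{N}(0,\Id_{d})$; since the conditional law does not depend on $\{Z_{i}\}_{i=1}^{n}$, the vector $Z$ is unconditionally $\mathcal{N}(0,\Id_{d})$ and independent of $\widehat{\Sigma}_{n}$. Then $\|S_{n}\|^{2}=Z^{T}\widehat{\Sigma}_{n}Z=\|Z\|^{2}+D_{n}$ with $D_{n}\define Z^{T}(\widehat{\Sigma}_{n}-\Id_{d})Z$, which is precisely the asserted decomposition; it remains to pin down the order of $D_{n}$.

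I would then bound $D_{n}$ through its second moment, using the independence of $Z$ and $\widehat{\Sigma}_{n}$. Writing $M\define\widehat{\Sigma}_{n}-\Id_{d}$ (a symmetric random matrix), conditioning on $M$ and applying the Gaussian identity $\E[(Z^{T}MZ)^{2}\mid M]=(\tr M)^{2}+2\|M\|_{\mathrm{F}}^{2}$ gives $\E D_{n}=\E\,\tr M=0$ and $\E D_{n}^{2}=\E(\tr M)^{2}+2\,\E\|M\|_{\mathrm{F}}^{2}$. Since $Z_{1}Z_{1}^{T}-\Id_{d},\dots,Z_{n}Z_{n}^{T}-\Id_{d}$ are i.i.d.\ and centered, $\E(\tr M)^{2}=\Var(\tr\widehat{\Sigma}_{n})=n^{-1}\Var(\|Z_{1}\|^{2})=2d/n$, and $\E\|M\|_{\mathrm{F}}^{2}=n^{-1}\E\|Z_{1}Z_{1}^{T}-\Id_{d}\|_{\mathrm{F}}^{2}=n^{-1}\bigl(\E\|Z_{1}\|^{4}-2d+d\bigr)=(d^{2}+d)/n$, using $\E\|Z_{1}\|^{4}=d^{2}+2d$ for the $\chi^{2}_{d}$ law. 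Hence $\E D_{n}^{2}=(2d^{2}+4d)/n$, which is $O(d^{2}/n)$ in the regime $d\to\infty$, $d/n\to0$, and the order bound $D_{n}=O_{p}(d^{2}/n)$ asserted in the statement follows.

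There is no deep obstacle once the coupling is in place; the two points requiring care are (i) the a.s.\ invertibility of $\widehat{\Sigma}_{n}$, which is automatic for $n\geq d$ and hence for all large $n$ since $d=o(n)$, and (ii) the Wishart-type fluctuation estimate $\E\|\widehat{\Sigma}_{n}-\Id_{d}\|_{\mathrm{F}}^{2}\asymp d^{2}/n$, which is exactly what governs the order of $D_{n}$. The only mildly delicate step is the verification that $Z=\widehat{\Sigma}_{n}^{-1/2}S_{n}$ is genuinely independent of $\widehat{\Sigma}_{n}$ (equivalently of $\{Z_{i}\}_{i=1}^{n}$), which, as indicated above, holds because its conditional distribution given $\{Z_{i}\}_{i=1}^{n}$ is the fixed law $\mathcal{N}(0,\Id_{d})$ irrespective of the value of $\{Z_{i}\}_{i=1}^{n}$.
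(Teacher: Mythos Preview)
The paper does not prove this theorem; it is quoted from \citet{Portnoy1986central} and then used as a black box in the proof of Proposition~\ref{prop:example_d}. So there is no in-paper argument to compare against, and your proposal has to be assessed on its own.

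Your coupling is correct and clean: the rank-one representation $X_{i}=g_{i}Z_{i}$, the conditional Gaussianity of $S_{n}$ with covariance $\widehat{\Sigma}_{n}=n^{-1}\tsum_{i}Z_{i}Z_{i}^{T}$, the definition $Z\define\widehat{\Sigma}_{n}^{-1/2}S_{n}$, and the independence of $Z$ from $\widehat{\Sigma}_{n}$ are all justified exactly as you say. The second-moment computation $\E D_{n}^{2}=(2d^{2}+4d)/n$ is also right.

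The gap is in the last inference. From $\E D_{n}^{2}=O(d^{2}/n)$, Chebyshev's inequality yields
\[
D_{n}=O_{p}\bigl((d^{2}/n)^{1/2}\bigr)=O_{p}(d/\sqrt{n}),
\]
\emph{not} $D_{n}=O_{p}(d^{2}/n)$. The two rates agree only when $d^{2}/n$ is bounded away from zero; under the sole hypothesis $d/n\to0$ the regime $d^{2}/n\to0$ is permitted, and there $d/\sqrt{n}\gg d^{2}/n$. In fact your own variance computation shows that $D_{n}$ genuinely fluctuates on the scale $d/\sqrt{n}$ (since $\Var D_{n}\asymp d^{2}/n$), so $D_{n}=O_{p}(d^{2}/n)$ cannot hold in that regime, and no refinement of the second-moment argument will recover it. Either the statement as transcribed carries an implicit restriction from Portnoy's setting (for instance $d^{2}/n$ bounded below, which is the regime relevant to the lower bound in Proposition~\ref{prop:example_d}), or the intended conclusion is $D_{n}=O_{p}((d^{2}/n)^{1/2})$; in any case your argument establishes the latter, not the former.
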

\begin{proposition}
\label{prop:example_d}
Let $\{X_{i}\}_{i=1}^{n}$ and $Z$ be as in Theorem \ref{theorem:Portnoy}. If $d\to \infty$ such that $d/n\to 0$ as $n\to \infty$, then 
$$\Delta_{\BCl}(S_{n},Z)  \geq  \Delta_{L}( \|S_{n}\|^{2}, \|Z\|^{2}) = O (d^{3/2}/n),$$
where  $\Delta_{L}(X,Y)\define \inf \{\varepsilon>0: G(x-\varepsilon)-\varepsilon\leq F(x)\leq G(x+\varepsilon)+\varepsilon \text{ for all } x\in \R\}$ denotes the  L\'evy distance between the c.d.f.-s of $X$ and $Y$, equal $F(x)$ and $G(x)$ respectively.
\end{proposition}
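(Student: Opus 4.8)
The plan is to prove the two halves of the statement separately. For the inequality $\Delta_{\BCl}(S_n,Z)\ge\Delta_L(\|S_n\|^2,\|Z\|^2)$ I would restrict the supremum in the definition \eqref{def:distB} of $\Delta_{\BCl}$ to balls centered at the origin ($t=0$): since $\{\|x\|\le r\}=\{\|x\|^2\le r^2\}$ and $r\mapsto r^2$ is a bijection of $[0,\infty)$ onto itself, while $\|S_n\|^2$ and $\|Z\|^2$ are supported on $[0,\infty)$, this gives $\Delta_{\BCl}(S_n,Z)\ge\sup_{s\in\R}|\P(\|S_n\|^2\le s)-\P(\|Z\|^2\le s)|$, the Kolmogorov distance between $\|S_n\|^2$ and $\|Z\|^2$; and the Kolmogorov distance always dominates the L\'evy distance. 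This step is routine and in fact yields the stronger lower bound in terms of the Kolmogorov distance.

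For the order estimate I would combine Portnoy's Theorem~\ref{theorem:Portnoy} --- which supplies a coupling with $\|S_n\|^2=\|Z\|^2+D_n$, $D_n=O_{p}(d^{2}/n)$ --- with the anti-concentration of $\|Z\|^2\sim\chi^2_d$: its density $f_{\chi^2_d}$ is maximized at the mode $d-2$, and a Stirling estimate gives $\sup_x f_{\chi^2_d}(x)\le C d^{-1/2}$. Splitting on the event $\{|D_n|\le\delta\}$ yields, for every $\delta>0$,
\begin{align*}
\Delta_L(\|S_n\|^2,\|Z\|^2)\le\sup_{s\in\R}\bigl|\P(\|S_n\|^2\le s)-\P(\|Z\|^2\le s)\bigr|&\le\delta\,\sup_x f_{\chi^2_d}(x)+\P(|D_n|>\delta)\\
&\le C\delta/\sqrt d+\P(|D_n|>\delta),
\end{align*}
because on $\{|D_n|\le\delta\}$ the two squared norms differ by at most $\delta$, and shifting the c.d.f. of $\chi^2_d$ by $\delta$ moves it by at most $C\delta/\sqrt d$. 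Choosing $\delta\asymp d^{2}/n$, the natural order of $D_n$, makes the first term $\asymp d^{3/2}/n$; the extra factor $\sqrt d$ gained over the size $d^{2}/n$ of $D_n$ is precisely the width $\asymp\sqrt d$ of the window over which $\chi^2_d$ is spread out.

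The main obstacle is controlling the residual probability $\P(|D_n|>\delta)$ for $\delta\asymp d^{2}/n$, since the bare statement $D_n=O_{p}(d^{2}/n)$ only guarantees that this quantity is eventually small, not that it is $O(d^{3/2}/n)$. To handle it I would exploit the explicit structure behind Portnoy's construction: writing $X_i=\eta_i Z_i$ with $\eta_i\sim\mathcal N(0,1)$, conditionally on $(\eta_1,\dots,\eta_n)$ one has $S_n\sim\mathcal N(0,\overline{\eta^2}\,I_d)$ with $\overline{\eta^2}\define n^{-1}\sum_{i=1}^n\eta_i^2$, so $\|S_n\|^2\overset{d}{=}\overline{\eta^2}\,\chi^2_d$ with the two factors independent and $n\,\overline{\eta^2}\sim\chi^2_n$; one may then take $D_n=(\overline{\eta^2}-1)\,\chi^2_d$, whose tails are controlled by the sub-exponential concentration of $\overline{\eta^2}$ around $1$ and of $\chi^2_d$ around $d$. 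Moreover this representation lets one bypass Portnoy altogether and obtain a cleaner (indeed sharper) bound: from $\P(\|S_n\|^2\le s)=\E[F_{\chi^2_d}(s/\overline{\eta^2})]$, a Taylor expansion in $\overline{\eta^2}-1$ has vanishing first-order term because $\E\,\overline{\eta^2}=1$, and the surviving second-order contribution is $O\bigl(n^{-1}\sup_s|s^{2}f_{\chi^2_d}'(s)|\bigr)=O(d/n)$. I would present whichever of these two routes is shorter in context; either way one then feeds the resulting order bound into the inequality established in the first paragraph.
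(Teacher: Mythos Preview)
Your first step---restricting to centred balls to bound $\Delta_{\BCl}$ below by the Kolmogorov distance between $\|S_n\|^2$ and $\|Z\|^2$, and then by the L\'evy distance---is exactly what the paper does.

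The second step goes in the wrong direction. Although the notation ``$=O(d^{3/2}/n)$'' in the statement reads like an upper bound, the proposition's role (made explicit in the discussion preceding it and in the paper's own proof, which ends with the chain $\Delta_{\BCl}\ge\Delta_L\ge Cd^{3/2}/n$) is to furnish a \emph{lower} bound on $\Delta_{\BCl}$. What you establish is $\Delta_L\le C\delta/\sqrt d+\P(|D_n|>\delta)$, an \emph{upper} bound on $\Delta_L$; combined with $\Delta_{\BCl}\ge\Delta_L$ this says nothing about $\Delta_{\BCl}$ from below. All the subsequent effort---the tail control of $D_n$, the explicit representation $\|S_n\|^2\overset{d}{=}\overline{\eta^2}\chi^2_d$, the second-order Taylor route that would even give $O(d/n)$---only sharpens an upper bound that is irrelevant to the intended claim. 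The paper's argument for the lower direction runs the anti-concentration estimate the other way: if $\Delta_L=\varepsilon$, then the $\varepsilon$-shift in the L\'evy inequality can move $G$ by at most $O(\varepsilon/\sqrt d)+\varepsilon$, and this must absorb the displacement $D_n$ of order $d^2/n$, forcing $\varepsilon\gtrsim d^{3/2}/n$. You have the right ingredients but need to assemble them to exhibit a point where $|F(x)-G(x)|$ is large, not to show it is uniformly small.
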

\begin{lemma}
\label{lemma:appl}
$\{X_{i}\}_{i=1}^{n}$  in Theorem \ref{theorem:Portnoy}  satisfy conditions of Theorem \ref{theorem:symm} for $\lminz=(1-\sqrt{2/5})^{1/2}$.
\end{lemma}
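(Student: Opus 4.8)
The plan is to recognize the law of $X_{i}$ as a rotation-invariant scale mixture of isotropic Gaussians, read off the distributional facts required by Theorem~\ref{theorem:symm} directly, and then build the auxiliary law $L$ from a Laplace scale mixture, with the Laplace kurtosis producing the number $1-\sqrt{2/5}$. First I would rewrite $X_{i}$: since $\mathcal{N}(0,vv^{T})$ is the law of $g\,v$ for $g\sim\mathcal{N}(0,1)$, conditioning on $Z_{i}$ shows $X_{i}\overset{d}{=}\eta_{i}Z_{i}$ with $\eta_{i}\sim\mathcal{N}(0,1)$ independent of $Z_{i}\sim\mathcal{N}(0,\Id_{d})$, i.e. $X_{i}\mid\eta_{i}\sim\mathcal{N}(0,\eta_{i}^{2}\Id_{d})$. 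From this: $\E X_{i}=0$; $\Var(X_{i})=\E[\eta_{i}^{2}]\,\Id_{d}=\Id_{d}$, so $\Sigma=\Id_{d}$ is p.d.\ and $\Zsigma\sim\mathcal{N}(0,\Sigma)=\mathcal{N}(0,\Id_{d})$ agrees with $Z$ in Theorem~\ref{theorem:Portnoy}; all absolute moments of $X_{i}$ are finite, in particular $\E|X_{i}^{\otimes6}|<\infty$; writing $\nu_{j}\define\E(Z^{\otimes j})$ for $Z\sim\mathcal{N}(0,\Id_{d})$, independence gives $\E(X_{i}^{\otimes j})=\E[\eta_{i}^{j}]\,\nu_{j}$, which is $0$ for odd $j$ and $(j-1)!!\,\nu_{j}$ for even $j$, so $\E X_{i}^{\otimes2}=\Id_{d}$ and $\E X_{i}^{\otimes4}=3\nu_{4}$; and, since the coordinates of $Z_{i}$ are i.i.d.\ symmetric and independent of $\eta_{i}$, the law of $X_{i}$ is invariant under a sign change of any single coordinate. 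Condition~\eqref{condit:poly} is then immediate: for a monomial $x_{1}^{a_{1}}\cdots x_{d}^{a_{d}}$ with some $a_{j}$ odd, flipping the sign of coordinate $j$ multiplies its expectation by $(-1)^{a_{j}}=-1$ while fixing the law of $X_{i}$, so the expectation vanishes (for every degree, in particular $\le5$), and likewise $\E X_{i}^{\otimes5}=0$.

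The core step is the construction of $\Sigma_{L}$ and $U_{L}$. I would take $\sigma^{2}\define1-\sqrt{2/5}\in(0,1)$, $\Sigma_{L}\define\sigma^{2}\Id_{d}$ (p.d., with smallest eigenvalue $\sigma^{2}$, hence $\lminz=(1-\sqrt{2/5})^{1/2}$), and $U_{L}\define\tau\zeta$, where $\zeta\sim\mathcal{N}(0,\Id_{d})$ is independent of a centered Laplace variable $\tau$ normalized so that $\E\tau^{2}=\sqrt{2/5}$. Because the Laplace law has kurtosis $6$ and all moments finite, $\E\tau^{4}=6(\E\tau^{2})^{2}=12/5$ and $\E|U_{L}^{\otimes6}|<\infty$, with $\Var(U_{L})=\E[\tau^{2}]\,\Id_{d}=\sqrt{2/5}\,\Id_{d}$. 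Put $L\define Z_{\Sigma_{L}}+U_{L}$ with $Z_{\Sigma_{L}}\sim\mathcal{N}(0,\Sigma_{L})$ independent of $U_{L}$; conditionally on $\tau$, $L\sim\mathcal{N}(0,(\sigma^{2}+\tau^{2})\Id_{d})$, so $\E(L^{\otimes j})=\E[(\sigma^{2}+\tau^{2})^{j/2}]\,\nu_{j}$ for even $j$ and $\E(L^{\otimes j})=0$ for odd $j$. Since these are scalar multiples of the same tensors $\nu_{j}$ appearing in $\E(X_{i}^{\otimes j})$, matching reduces to scalars: $j=1,3,5$ are trivial; $j=2$ gives $\E[\sigma^{2}+\tau^{2}]=\sigma^{2}+\sqrt{2/5}=1$, matching $\E X_{i}^{\otimes2}=\Id_{d}$; and, with $a\define\E\tau^{2}=\sqrt{2/5}$ and $\sigma^{2}=1-a$, $\E[(\sigma^{2}+\tau^{2})^{2}]=\sigma^{4}+2\sigma^{2}a+\E\tau^{4}=(1-a)^{2}+2(1-a)a+6a^{2}=1+5a^{2}=3$, matching $\E X_{i}^{\otimes4}=3\nu_{4}$. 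Hence $\E(L^{\otimes j})=\E(X_{i}^{\otimes j})$ for all $j\in\{1,\dots,5\}$, and all hypotheses of Theorem~\ref{theorem:symm} hold with $\lminz=(1-\sqrt{2/5})^{1/2}$; alternatively, one could check the (trivial here, since $X_{i}$ has full support) support hypotheses of Lemma~\ref{lemma:moment_conditions} and read off the same $\lminz$.

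I expect the only genuinely nonroutine point to be spotting the right auxiliary variable: the last display shows that if $\tau$ has kurtosis $\kappa$ and $\E\tau^{2}=1-\sigma^{2}$, then the $j=4$ match forces $(\kappa-1)(1-\sigma^{2})^{2}=2$, so a Gaussian $\tau$ ($\kappa=3$) would require $\sigma^{2}=0$, which is inadmissible; a heavier-than-Gaussian tail is needed, and the Laplace ($\kappa=6$) is the natural choice, pinning $\sigma^{2}=1-\sqrt{2/5}$. The ``$6$ finite absolute moments'' clause causes no trouble, since $X_{i}$ and $U_{L}$ have moments of all orders. Everything else — the Gaussian moment-tensor bookkeeping, the sign-flip argument for~\eqref{condit:poly}, and the conditioning computation for $\E(L^{\otimes j})$ — is routine.
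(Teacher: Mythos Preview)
Your proof is correct and follows the same overall strategy as the paper's: check condition~\eqref{condit:poly} via coordinate-wise sign symmetry, then build $L=Z_{\Sigma_{L}}+U_{L}$ with $\Sigma_{L}=(1-\sqrt{2/5})\,\Id_{d}$ and a Laplace-based $U_{L}$ so that $\E(L^{\otimes j})=\E(X_{i}^{\otimes j})$ for $j\le 5$. The only genuine difference is the specific construction of $U_{L}$. The paper sets $U_{L}=(2/5)^{1/4}\{YY^{T}\}^{1/2}Z$ with $Y$ a $d$-vector of i.i.d.\ standardized double-exponential coordinates and $Z\sim\mathcal{N}(0,\Id_{d})$, and then verifies the fourth-moment match entrywise. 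You instead take $U_{L}=\tau\zeta$ with a \emph{scalar} Laplace $\tau$ (variance $\sqrt{2/5}$) and $\zeta\sim\mathcal{N}(0,\Id_{d})$; this keeps $L\mid\tau\sim\mathcal{N}(0,(\sigma^{2}+\tau^{2})\Id_{d})$ isotropic, so that $\E(L^{\otimes j})$ and $\E(X_{i}^{\otimes j})$ are both scalar multiples of the same Gaussian moment tensor $\nu_{j}$ and the entire fourth-moment match collapses to the single identity $\E[(\sigma^{2}+\tau^{2})^{2}]=1+5(\E\tau^{2})^{2}=3$. Your variant is a bit cleaner and makes the role of the Laplace kurtosis~$6$ in forcing $\sigma^{2}=1-\sqrt{2/5}$ fully explicit; the paper's version stays closer to the rank-one covariance structure $Z_{i}Z_{i}^{T}$ of Portnoy's example.
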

\section{Bootstrap approximation}
\label{section:ES_bootstr}
Here we consider the nonparametric or Efron's bootstrapping scheme for $S_{n}$ (by \citet{Efron1979bootstrap,Efron1994introduction})  and study its accuracy in the framework of Theorems \ref{theorem:first} and \ref{theorem:multivar_gen}. 
Let $\{X_{i}\}_{i=1}^{n}$ be i.i.d. $\R^{d}$-valued random vectors with $\E |X_{i}^{\otimes 4}|<\infty$, p.d. $\Sigma\define\Var X_{i}$ and $\mu\define \E X_{i}$. Introduce resampled variables $\Xs_{1},\dots,\Xs_{n}$ with zero mean, according to the nonparametric bootstrapping scheme: 
\begin{equation}
\label{eq:EB}
\Pb(\Xs_{i}=X_{j}-\Xmean)=1/n\quad \forall i,j=1,\dots,n,
\end{equation}
 where $\Xmean=n^{-1}\sum_{i=1}^{n}X_{i}$ and  $\Pb(\cdot)\define\P(\cdot\vert \{X_{i}\}_{i=1}^{n})$, $\Es(\cdot)\define \E(\cdot\vert \{X_{i}\}_{i=1}^{n})$. Hence $\{\Xs_{j}\}_{j=1}^{n}$ are i.i.d. and $$\Eb(\Xs_{j})=\E(X_{i}-\mu)=0,\quad  
\Es({\Xs_{j}}^{\otimes k})=n^{-1}{{\textstyle\sum\nolimits_{i=1}^{n}}} (X_{i}- \Xmean)^{\otimes k} \approx_{\P} \E (X_{i}-\mu)^{\otimes k}$$ for $k\geq 1$; the sign $\approx_{\P}$ denotes \enquote{closeness} with high probability. Denote the centered sum $S_{n}$ and its bootstrap approximation as follows
$$\Sbar\define n^{-1/2}\sum\nolimits_{i=1}^{n} (X_{i}-\mu),\quad \Snast  \define  n^{-1/2}\sum\nolimits_{i=1}^{n} \Xs_{i}.$$
In order to quantify the accuracy of the bootstrap approximation of the probability distribution of $\Sbar$, we compare the empirical moments $n^{-1}{{\textstyle\sum\nolimits_{i=1}^{n}}} (X_{i}- \Xmean)^{\otimes k}$ and the population moments $\E (X_{i}-\mu)^{\otimes k}$ for $k=2,3$, using exponential concentration bounds. For this purpose we introduce condition \eqref{condit:expm1} on tail behavior of coordinates of $X_{i}-\mu$. Here we follow the notation from Section 2.3 by \cite{BouchLug2013Conc}. A random real-valued variable $x$ belongs to class $\mathcal{G}(\sigma^{2})$ of sub-Gaussian random variables with variance factor $\sigma^{2}>0$ if
\begin{align}
\label{condit:expm1}
\E \bigl\{\exp (\gamma x)\bigr\}&\leq \exp\bigl(\gamma^{2}\sigma^{2}/2\bigr) \quad \forall \gamma \in \R.
\end{align}
We assume that every coordinate of random vectors $X_{i}-\mu$, $i=1,\dots,n$ belongs to the class $\mathcal{G}(\sigma^{2})$ for some $\sigma^{2}>0$. 
Let also 
\begin{gather}
\label{def:tast_Cast}
\begin{split}
C_{1}(t) \define 2 \{4\sqrt{2t} + 3tn^{-1/2}\}, \quad C_{2}(t)\define 4\sqrt{2}(\sqrt{8}t+t^{3/2}n^{-1/2}),
\\
 t_{\ast}\define \log n+\log(2 dn +d^{2}+3d), \quad C_{j \ast}\define C_{j}(t_{\ast}) \text{ for } j=1,2.
 \end{split}
\end{gather}
$\lambda_{\min}(\Sigma)>0$ denotes the smallest eigenvalue of the covariance matrix $\Sigma$. We consider  $d$ and $n$ such that
\begin{gather}
\label{condit:lambda_n}
\sigma^{2}C_{1\ast}d/\sqrt{n}
< \lambda_{\min}(\Sigma).
\end{gather}
This condition allows to ensure that the approximation bound in Theorem \ref{theorem:bootst} holds with high probability. Recall that $h_{1}(\beta)= 
(1-\beta^{2})^{2}\beta^{-4}+
 (1-\beta^{2})^{-1}\beta^{-4}$ for $\beta\in (0,1)$
\begin{theorem}
\label{theorem:bootst}
If the conditions introduced above are fulfilled, then it holds with probability $\geq 1-n^{-1}$ 
\begin{align}
\nonumber
\Delta_{\BCl}(\Sbar,\Snast)
&\leq \delta_{\BCl},
\end{align}
where $\delta_{\BCl} =\delta_{\BCl}(d,n, \mathcal{L}(X_{i}))$ is defined as follows
\begin{align}
\label{ineq:deltaB_2}
\delta_{\BCl} &
\define 
(\sqrt{2}\beta^{2}\lmin^{2})^{-1}\bigl\{  \sigma^{2}C_{1\ast} d/\sqrt{n}\bigr\}
\\
\label{ineq:deltaB_3_1}
&\quad
+(\sqrt{6}\beta^{3} \lmin^{3})^{-1}
\Bigl[ 
4\sigma \sqrt{2 dn^{-2} t_{\ast}}
\{ \|\Sigma\|_{\mathrm{F}}+\sigma^{2}t_{\ast}d/n\}
\\
\label{ineq:deltaB_3_2}
&\quad+
\sigma^{2}d^{3/2}n^{-1} C_{2\ast}\{1+3n^{-1/2}\} +\|\E( X_{1}-\mu)^{\otimes 3}\|_{\mathrm{F}} n^{-1/2}
\Bigr]
\\
\nonumber
&  
\quad+  4\sqrt{2}C_{B,4}\lmin^{-2}\Bigl\{  
h_{1}(\beta) \bigl[ \E \|X_{1}-\mu\|^{4}+  8(1+n^{-2})\{2\sigma^{2}t_{\ast}d/n \}^{2} \bigr]
\\
\nonumber
&
\quad+(d^{2}+2d)( 3\|\Sigma\|^{2} + 2 \bigl\{ \sigma^{2}C_{1\ast}d/\sqrt{n}\bigr\}^{2}+1/2)
\Bigr\}^{1/2}n^{-1/2}
\\
\nonumber
&
\quad+    2(\sqrt{6}\lmin^{4})^{-1}\Bigl\{h_{1}(\beta)\bigl[ \E \|X_{1}-\mu\|^{4}+8(1+n^{-2})\{2\sigma^{2} t_{\ast} d/n \}^{2} \bigr]
\\
\nonumber
& 
\quad+(d^{2}+2d)\bigl[ 3\|\Sigma\|^{2} +  2 \bigl\{ \sigma^{2}C_{1\ast}d/\sqrt{n}\bigr\}^{2}\bigr] \Bigr\}n^{-1}
\end{align}
 for arbitrary $\beta\in (0,1)$ and for $\lmin^{2}\define  \lambda_{\min}(\Sigma)- \sigma^{2}C_{1\ast}d/\sqrt{n}$. 
 \end{theorem}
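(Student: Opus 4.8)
The plan is to apply the general-covariance half of Theorem \ref{theorem:multivar_gen} conditionally on the data $\{X_i\}_{i=1}^n$, with $X_i$ there replaced by the centered original summands $X_i-\mu$ (whose covariance is $\Sigma$ and whose third moment is $\E(X_1-\mu)^{\otimes 3}$) and $T_i$ there replaced by the bootstrap summands $\Xs_i$ (whose conditional covariance is the empirical covariance $\Sigmah\define n^{-1}\sum_{i=1}^n (X_i-\Xmean)^{\otimes 2}$ and whose conditional third moment is the empirical third moment $n^{-1}\sum_{i=1}^n(X_i-\Xmean)^{\otimes 3}$). Since $\Delta_{\BCl}$ is defined pathwise, the bound from Theorem \ref{theorem:multivar_gen} applies verbatim to $\sup_{B\in\BCl}|\P(\Sbar\in B)-\Ps(\Snast\in B)|$ once we verify its hypotheses, in particular that $\lmin^2>0$ where $\lmin^2$ is now the minimum of the smallest eigenvalues of $\Sigma$ and $\Sigmah$; this is exactly what condition \eqref{condit:lambda_n} buys us on the high-probability event, via a bound $\|\Sigmah-\Sigma\|\le \sigma^2(d/\sqrt n)C_{1\ast}$.

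The bulk of the argument is therefore the stochastic control of the empirical quantities. First I would establish, on an event of probability $\ge 1-n^{-1}$, the three ingredients that enter \eqref{ineq:deltaB_2}--\eqref{ineq:deltaB_3_2}: (i) $\|\Sigmah-\Sigma\|_{\mathrm F}\le \sigma^2(d/\sqrt n)C_{1\ast}$, giving the leading term $(\sqrt 2\beta^2\lmin^2)^{-1}\sigma^2(d/\sqrt n)C_{1\ast}$ from the covariance-mismatch term $(\sqrt 2\beta^2\lmin^2)^{-1}\|\Sigmax-\SigmaT\|_{\mathrm F}$ in Theorem \ref{theorem:multivar_gen}; (ii) a bound on $\|n^{-1}\sum(X_i-\Xmean)^{\otimes 3}-\E(X_1-\mu)^{\otimes 3}\|_{\mathrm F}$, decomposed into the fluctuation of $n^{-1}\sum(X_i-\mu)^{\otimes 3}$ around its mean plus the correction coming from recentering by $\Xmean-\mu$ rather than $0$ — this is where the two summands in \eqref{ineq:deltaB_3_1}--\eqref{ineq:deltaB_3_2}, with the factors $4\sigma\sqrt{2dn^{-2}t_\ast}\{\|\Sigma\|_{\mathrm F}+\sigma^2(d/n)t_\ast\}$ and $\sigma^2 d^{3/2}n^{-1}C_{2\ast}\{1+3n^{-1/2}\}$, come from; and (iii) bounds on the bootstrap fourth-moment surrogates $\Es\|\Xs_1\|^4$ and $\|\Sigmah\|^2$ feeding the $V_4$ and $v_4$ terms. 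For (i)–(iii) the tool is the same: each coordinate of $X_i-\mu$ is sub-Gaussian with variance factor $\sigma^2$ by \eqref{condit:expm1}, so products of two, three, or four coordinates have sub-exponential / higher-order sub-Weibull tails, and a union bound over the $\le 2dn+d^2+3d$ relevant scalar averages at level $t_\ast=\log n+\log(2dn+d^2+3d)$ (note $e^{-t_\ast}=n^{-1}(2dn+d^2+3d)^{-1}$) yields the stated exponential deviation inequalities with the explicit constants packaged into $C_1(t),C_2(t)$ and into the sub-Gaussian Bernstein-type estimate for sums of squared coordinates. The recentering corrections are handled by the elementary identity $n^{-1}\sum(X_i-\Xmean)^{\otimes k}=\sum_{j}\binom{k}{j}\big(n^{-1}\sum(X_i-\mu)^{\otimes(k-j)}\big)\otimes(\mu-\Xmean)^{\otimes j}$ together with the already-controlled deviation of $\Xmean-\mu$ (itself sub-Gaussian coordinatewise with variance factor $\sigma^2/n$), which accounts for the $\sigma^2(d/n)t_\ast$-type terms appearing inside the $V_4$ and third-moment brackets.

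Finally I would substitute these high-probability bounds into the general-covariance inequality of Theorem \ref{theorem:multivar_gen}: the third-moment term $(\sqrt 6\beta^3\lmin^3)^{-1}|R_{3,T}|n^{-1/2}$ with $|R_{3,T}|\le \lmin^{-3}\|\E(X_1-\mu)^{\otimes 3}-\Es\Xs_1{}^{\otimes 3}\|_{\mathrm F}$ produces lines \eqref{ineq:deltaB_3_1}--\eqref{ineq:deltaB_3_2} after inserting the split from (ii) (the population third moment $\|\E(X_1-\mu)^{\otimes 3}\|_{\mathrm F}n^{-1/2}$ is the residual term there), and the $n^{-1/2}$ and $n^{-1}$ blocks with the $h_1(\beta)V_4+(d^2+2d)(v_4+1/2)$ structure become the last two brackets in $\delta_{\BCl}$ once we bound $\E\|X_1-\mu\|^4+\Es\|\Xs_1\|^4$ by $\E\|X_1-\mu\|^4$ plus the $8(1+n^{-2})\{2\sigma^2(d/n)t_\ast\}^2$ correction and $\|\Sigma\|^2+\|\Sigmah\|^2$ by $3\|\Sigma\|^2+2\{\sigma^2(d/\sqrt n)C_{1\ast}\}^2$. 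The main obstacle is bookkeeping rather than conceptual: tracking all the recentering-by-$\Xmean$ correction terms through the third- and fourth-moment estimates while keeping every numerical constant explicit and making sure the union-bound budget (the $2dn+d^2+3d$ scalar statistics) matches the definition of $t_\ast$ exactly, so that the total failure probability is at most $n^{-1}$.
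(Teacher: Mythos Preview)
Your proposal is correct and follows essentially the same route as the paper's proof: apply the second (unequal-covariance) part of Theorem \ref{theorem:multivar_gen} with $T_i=\Xs_i$ conditionally on $\{X_i\}$, then control $\|\Sigmah-\Sigma\|_{\mathrm F}$, the third-moment discrepancy, $\Es\|\Xs_1\|^4$, and $\|\Sigmah\|^2$ via sub-Gaussian/Bernstein concentration (the paper invokes Theorem \ref{theorem:conc_book} and Lemma \ref{lemma:tail_ineq}) plus a union bound at level $t_\ast$, and plug the resulting high-probability bounds back in. Your reading of the $\|\E(X_1-\mu)^{\otimes 3}\|_{\mathrm F}n^{-1/2}$ term as a residual arising from the paper's particular third-moment decomposition (centering $n^{-1}\sum X_i^{\otimes 3}$ against $n^{-1}\sum X_i\otimes\Sigma$ rather than against its own mean) is exactly right, and your remark that the difficulty is bookkeeping rather than conceptual matches the paper's presentation.
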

\begin{remark}
\label{remark:delta}
The explicit approximation error $\delta_{\BCl}$ in Theorem \ref{theorem:bootst} allows to evaluate accuracy of the bootstrap in terms of $d,n, \sigma^{2}$, and moments of $X_{i}$. In general, $\delta_{\BCl} \leq C_{\ast} \{\sqrt{d^2/n} +d^2/n\}$
(for $C_{\ast}$ depending on the $\log$-term $t_{\ast}$ and moments of $X_{i}$), however the expression for  $\delta_{\BCl}$  provides a much more detailed and accurate characterization of the approximation error.  The proof of this result is based on the second statement in Theorem \ref{theorem:multivar_gen} (for $\Sigma$ and $\SigmaT$ not necessarily equal to each other). The first term on the right-hand side of \eqref{ineq:deltaB_2} and  the summands in \eqref{ineq:deltaB_3_1}, \eqref{ineq:deltaB_3_2}  characterize the distances between the population moments $ \E (X_{i}-\mu)^{\otimes k}$ and their consistent estimators $\Es({\Xs_{j}}^{\otimes k})$ (for $k=2$ and 3 respectively). The rest of the summands in the expression for $\delta_{\BCl}$  correspond to the higher-order remainder terms which leads to smaller error terms for a sufficiently large $n$. 
\end{remark}

\section{Elliptic confidence sets}
\label{sect:ellip}
 An elliptic confidence set is one of the major types of confidence regions in statistical theory and applications. They are commonly constructed for parameters of (generalized) linear regression models, in ANOVA methods, and in various parametric models where a multivariate statistic is asymptotically normal. As for example, in the case of the score function considered in Section \ref{sect:score}. See, for instance, \cite{Friendly2013elliptical} for an overview of statistical problems and applications involving elliptic confidence regions.

In this section we construct confidence regions for an expected value of i.i.d. random vectors $\{X_{i}\}_{i=1}^{n}$,  using the bootstrap-based quantiles. %
Since the considered set-up is rather general, the provided results can be used for various applications, where one is interested in estimating a mean of an observed sample in a nonasymptotic and multivariate setting. See, for example, \cite{ArlotBlanch2010}, where the authors constructed nonasymptotic confidence bounds in $\ell_{r}$-norm for the mean value of high dimensional random vectors and considered a number of important practical applications.

Let $W \in \R^{d\times d}$ be a p.d. symmetric matrix. $W$ is supposed to be known, it defines the quadratic form of an elliptic confidence set:
$B_{W}(x_{0},r)\define \{x \in \R^{d}:  (x-x_{0})^{T}W(x-x_{0})\leq r\},$ 
for $x_{0}\in \R^{d}, r \geq 0$.
There are various ways of how one can interpret and use $W$ in statistical models. For example, $W$ can serve for weighting an impact of residuals in linear regression models in the presence of errors' heteroscedasticity (cf. weighted least squares estimation); for  regularized least squares estimators in the linear regression model (for example, ridge regression) $W$ denotes a regularized covariance matrix of the LSE; see \citep*{Friendly2013elliptical} for further examples.

In Proposition \ref{prop:ellip2} below we construct an elliptic confidence region for $\E X_{1}$ based on the bootstrap approximation established in  Section \ref{section:ES_bootstr}.  Let $\Xbar^{\ast}\define n^{-1}\sum_{j=1}^{n}\Xs_{j}$ for the i.i.d. bootstrap sample $\{\Xs_{j}\}_{j=1}^{n}$ generated from the empirical distribution of $\{X_{i}-\Xbar\}_{i=1}^{n}$ for $\Xbar= n^{-1}\sum_{i=1}^{n}X_{i}$. Let  also
$$q^{\ast}_{\alpha}\define \inf\{t>0: (1-\alpha) \leq \Pb( n^{1/2}\|W^{1/2} \Xbar^{\ast} \| \leq t)\}$$ denote $(1-\alpha)$-quantile of the bootstrap statistic $n^{1/2}\|W^{1/2} \Xbar^{\ast} \|$ for arbitrary $\alpha \in (0,1)$.  
We assume that  coordinates of vectors $\{W^{1/2}(X_{i}-\E X_{i})\}_{i=1}^{n}$ are sub-Gaussian with variance factor $\sigma_{W}^{2}>0$ (i.e. condition \eqref{condit:expm1} is fulfilled).  %
Let also $d,n$ be such that
$ \sigma_{W}^{2}C_{1\ast}d/\sqrt{n}
< \lambda_{\min}(W^{1/2}\Sigma W^{1/2})$ (for $C_{1\ast}$ defined in \eqref{def:tast_Cast}). Theorem \ref{theorem:bootst} implies the following statement
\begin{proposition}
\label{prop:ellip2}
If the conditions above are fulfilled, it holds
\begin{align*}
\bigl|\P\bigl( n^{1/2}\|W^{1/2} (\Xbar - \E X_{1})\|  \leq q^{\ast}_{\alpha} \bigr) - (1-\alpha)\bigr| &\leq \delta_{W}.
\end{align*}
$\delta_{W}$ is analogous to $\delta_{\BCl}$, where we take $\Sigma \define W^{1/2} \Sigma W^{1/2}$, $\sigma^{2}\define \sigma_{W}^{2}$, etc. A detailed definition of $\delta_{W}$ is given in \eqref{def:deltaW}, see also Remark \ref{remark:delta} for the discussion about its dependence on $d$ and n.
\end{proposition}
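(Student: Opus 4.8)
The plan is to reduce Proposition \ref{prop:ellip2} directly to Theorem \ref{theorem:bootst} by a change of variables. First I would observe that the event $\{n^{1/2}\|W^{1/2}(\Xbar-\E X_{1})\|\le q^{\ast}_{\alpha}\}$ is exactly $\{\Sbar_{W}\in B_{W}\}$, where $\Sbar_{W}\define n^{-1/2}\sum_{i=1}^{n}W^{1/2}(X_{i}-\mu)$ and $B_{W}\define\{x\in\R^{d}:\|x\|\le q^{\ast}_{\alpha}\}$ is a centered $\ell_{2}$-ball, hence a member of $\BCl$. Likewise the bootstrap statistic satisfies $n^{1/2}\|W^{1/2}\Xbar^{\ast}\|=\|\Snast_{W}\|$ with $\Snast_{W}\define n^{-1/2}\sum_{i=1}^{n}W^{1/2}\Xs_{i}$, so that $q^{\ast}_{\alpha}$ is the $(1-\alpha)$-quantile of $\|\Snast_{W}\|$ and, by definition of the quantile, $\Ps(\Snast_{W}\in B_{W})\ge 1-\alpha$ (with equality up to the atom of the discrete bootstrap distribution at $q^{\ast}_{\alpha}$, which only helps the bound). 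The key point is that the transformed summands $W^{1/2}(X_{i}-\mu)$ have mean zero, covariance $W^{1/2}\Sigma W^{1/2}$ (p.d. since $W$ and $\Sigma$ are p.d.), finite fourth moments, and sub-Gaussian coordinates with variance factor $\sigma_{W}^{2}$ by assumption; and the nonparametric bootstrap commutes with the linear map $x\mapsto W^{1/2}x$, so $\{W^{1/2}\Xs_{i}\}$ is exactly the Efron bootstrap sample drawn from $\{W^{1/2}(X_{i}-\Xbar)\}$.

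Next I would apply Theorem \ref{theorem:bootst} verbatim to the sample $\{W^{1/2}X_{i}\}_{i=1}^{n}$, with the substitutions $\Sigma\rightsquigarrow W^{1/2}\Sigma W^{1/2}$, $\sigma^{2}\rightsquigarrow\sigma_{W}^{2}$, $\mu\rightsquigarrow W^{1/2}\mu$, and the moments $\E\|X_{1}-\mu\|^{4}$, $\|\E(X_{1}-\mu)^{\otimes 3}\|_{\mathrm F}$, $\|\Sigma\|$, $\lambda_{\min}(\Sigma)$ replaced by their $W^{1/2}$-transformed analogues. The hypothesis $\sigma_{W}^{2}(d/\sqrt{n})C_{1\ast}<\lambda_{\min}(W^{1/2}\Sigma W^{1/2})$ is precisely condition \eqref{condit:lambda_n} for the transformed problem, so all hypotheses of Theorem \ref{theorem:bootst} are met. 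This yields, with probability $\ge 1-n^{-1}$,
\begin{align*}
\sup\nolimits_{B\in\BCl}\bigl|\P(\Sbar_{W}\in B)-\Ps(\Snast_{W}\in B)\bigr|\le\delta_{W},
\end{align*}
where $\delta_{W}$ is the right-hand side of \eqref{ineq:deltaB_2}--\eqref{ineq:deltaB_3_2} with the above substitutions; I would record this explicit formula as \eqref{def:deltaW}. Specializing the supremum to the single ball $B_{W}$ and using $\Ps(\Snast_{W}\in B_{W})\ge 1-\alpha$ gives $\P(\Sbar_{W}\in B_{W})\ge 1-\alpha-\delta_{W}$; the matching lower-sided inequality $\P(\Sbar_{W}\in B_{W})\le 1-\alpha+\delta_{W}$ follows by the same argument applied with the ball of radius $q^{\ast}_{\alpha}$ and noting that the bootstrap c.d.f. of $\|\Snast_{W}\|$ cannot exceed $1-\alpha$ by more than its jump at $q^{\ast}_{\alpha}$, which is itself $\le\delta_{W}$ after an anti-concentration step (Lemma in Section \ref{sect:cited}); in fact the cleanest route is to absorb the quantile-discreteness gap into the constant, since Theorem \ref{theorem:bootst} already controls $\sup_{B}$, and the atom of $\|\Snast_{W}\|$ at any point is bounded by the Gaussian anti-concentration bound over $\BCl$ plus $2\delta_{W}$.

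The main obstacle is the quantile-to-probability conversion: unlike a statement about $\sup_{B\in\BCl}$, the proposition is about a single ball whose radius $q^{\ast}_{\alpha}$ is itself a (random, discrete) functional of the bootstrap distribution, so one must control both the bootstrap atom at $q^{\ast}_{\alpha}$ and the possibility that the Gaussian (or true) distribution places non-negligible mass in the $\delta_{W}$-neighborhood of the sphere $\{\|x\|=q^{\ast}_{\alpha}\}$. Both are handled by the anti-concentration inequality for $\BCl$ quoted in Section \ref{sect:cited} (whose sharpness is also asserted there), which bounds $\sup_{r\ge 0}\P(|\,\|\Zsigma\|-r\,|\le\varepsilon)$ linearly in $\varepsilon$; combined with the transfer bound $\delta_{W}$ this shows the relevant shell has probability $O(\delta_{W})$, so the two-sided gap is $\le\delta_{W}$ up to adjusting the numerical constants inside $\delta_{W}$. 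Everything else is bookkeeping: checking that linear images of Efron's bootstrap are again Efron's bootstrap, that sub-Gaussianity and the moment quantities transform as claimed, and that \eqref{condit:lambda_n} becomes the stated spectral condition on $W^{1/2}\Sigma W^{1/2}$.
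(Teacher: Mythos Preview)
Your approach is the same as the paper's: transform by $W^{1/2}$ and invoke Theorem~\ref{theorem:bootst}. The paper's proof is literally one sentence (``Proposition~\ref{prop:ellip2} follows from Theorem~\ref{theorem:bootst}, with the error term $\delta_{W}$''), so you are supplying considerably more detail than the paper does on the quantile-to-probability conversion; the anti-concentration discussion in your last paragraph is not spelled out in the paper at all.

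One concrete point you miss, however: the paper's $\delta_{W}$ in \eqref{def:deltaW} is not merely $\delta_{\BCl}$ with the $W$-substitutions---it carries an extra additive $+\,n^{-1}$ at the end. This term is what converts the \emph{high-probability} conclusion of Theorem~\ref{theorem:bootst} (which holds only on an event of probability $\ge 1-n^{-1}$) into the \emph{unconditional} coverage statement of the proposition: on the complementary event one simply bounds the indicator by $1$ and pays $n^{-1}$. Your definition of $\delta_{W}$ (``the right-hand side of \eqref{ineq:deltaB_2}--\eqref{ineq:deltaB_3_2} with the above substitutions'') omits this, so as written your final inequality would only hold with probability $\ge 1-n^{-1}$, not deterministically. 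This is an easy fix---just add $n^{-1}$ to your $\delta_{W}$---but it is the one step the paper does record and you do not.
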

\section{Score tests}
\label{sect:score}
Let $y=(y_{1},\dots,y_{n})$ be an i.i.d. sample from a p.d.f. or a p.m.f. $p(x)$. Let also $\mathcal{P}\define\{p(x;\theta): \theta \in \Theta \subseteq \R^{d}\}$ denote a known parametric family of  probability distributions. The unknown function $p(x)$ does not necessarily belong to the parametric family $\mathcal{P}$, in other words the parametric model can be misspecified. Following the renown aphorism of \cite{Box1976} \enquote{All models are wrong, but some are useful}, it is widely recognized that in general a (parametric) statistical model cannot be considered exactly correct. See, for example, \cite{White82,Gustafson2001,Wit2012all}, \textsection 1.1.4 by \cite{GLMbook1983}, and p. 2 by \cite{Bickel2015mathematical_II}. Hence it is of particular importance to design methods of statistical inference that are \emph{robust to model misspecification}. In this section we propose \emph{a bootstrap score test procedure} which is valid even in case when the parametric model $\mathcal{P}$ is misspecified.

Let $s(\theta)=s(\theta,y)$ and $I(\theta)$ denote the score function and the Fisher information matrix corresponding to the introduced parametric model
\begin{align*}
s(\theta)\define \sum\nolimits_{i=1}^{n}{\partial} \log p(y_{i};\theta)/{\partial \theta} ,\quad
I(\theta)\define \Var\{ s(\theta)\}.
\end{align*}
We suppose that the standard regularity conditions on the parametric family $\mathcal{P}$ are fulfilled. Let $\theta_{0}\define\mathrm{argmin}_{\theta\in \Theta}\E\log (p(y_{i})/p(y_{i};\theta))$ denote the parameter which corresponds to the  projection of $p(x)$ on the parametric family $\mathcal{P}$ w.r.t. the Kullback-Leibler divergence (also known as the relative entropy).

Consider a simple hypothesis $H_{0}: \theta_{0}=\theta^{\prime}$.  Rao's score test (by \cite{Rao1948large}) for testing $H_{0}$ is based on the following test statistic and its convergence in distribution to $\chi^{2}_{d}$
\begin{equation}
\label{eq:Rtest}
R(\theta^{\prime})\define s(\theta^{\prime})^{T}\{I(\theta^{\prime})\}^{-1} s(\theta^{\prime}) \overset {d \vert H_{0}}{\to} \chi^{2}_{d},\quad n\to \infty,
\end{equation}
provided that matrix $I(\theta^{\prime})$ is p.d. The sign $\overset {d \vert H_{0}}{\to}$ denotes convergence in distribution under $H_{0}$.  
  Matrix $I(\theta^{\prime})$ can be calculated explicitly for a known $\theta^{\prime}$ if one assumes that $p(x)\in \mathcal{P}$, i.e. if the parametric model is correct. However, if $p(x)$ does not necessarily belong to the considered parametric class $\mathcal{P}$, then neither $I(\theta^{\prime})$ nor the probability distribution of $s(\theta^{\prime})$ can be calculated in an explicit way under the general assumptions considered here. In this case, the Fisher information matrix $I(\theta)$ is typically estimated using the Hessian of the log-likelihood function $\sum_{i=1}^{n}\log p(y_{i};\theta)$. However the standardization with an empirical Fisher information may considerably reduce the power of the score test for a small sample size $n$ (see \cite{Rao2005} and \cite{Freedman2007can}).

Below we consider {a bootstrap score test} for testing simple hypothesis $H_{0}$, under possible misspecification of the parametric model. Denote 
$$\tilde{R}(\theta^{\prime})\define \|s(\theta^{\prime})/\sqrt{n}\|^{2}.$$
 One can consider $s(\theta^{\prime})=\sum\nolimits_{i=1}^{n}X_{i},$ where random vectors $X_{i}\define \partial \log p(y_{i};\theta^{\prime})/{\partial \theta^{\prime}}$ are i.i.d. with $\E X_{i} =0$ under $H_{0}$. Introduce the bootstrap approximations of $s(\theta^{\prime})$ and $\tilde{R}(\theta^{\prime})$:
$$s^{\ast}(\theta^{\prime})\define \sum\nolimits_{i=1}^{n}\Xs_{i},\quad   R^{\ast}(\theta^{\prime})\define \|s^{\ast}(\theta^{\prime})/\sqrt{n}\|^{2},$$
where $\{\Xs_{i}\}_{i=1}^{n}$ are sampled according to Efron's bootstrap scheme \eqref{eq:EB}. Let also 
$$t^{\ast}_{\alpha}\define \inf\{t>0: (1-\alpha) \leq \Pb( R^{\ast}(\theta^{\prime}) \leq t)\}$$ denote $(1-\alpha)$-quantile of the bootstrap score statistic for arbitrary $\alpha \in (0,1)$. Suppose that coordinates of vectors $X_{i}= \partial \log p(y_{i};\theta^{\prime})/{\partial \theta^{\prime}}$ satisfy condition \eqref{condit:expm1} with variance factor $\sigma^{2}_{s}>0$. Let also $d$ and $n$ be such that $
\sigma_{s}^{2}C_{1\ast} d/\sqrt{n} <  \lambda_{\min}(I(\theta^{\prime}))/n$.
 Then Theorem \ref{theorem:bootst} implies the following statement which characterizes accuracy of the bootstrap score test under $H_{0}$. 
\begin{theorem}[Bootstrap score test]
\label{theorem:score1}
If the conditions above are fulfilled, it holds
\begin{align*}
\bigl|\P_{H_{0}}\bigl(\tilde{R}(\theta^{\prime}) > t^{\ast}_{\alpha} \bigr) -\alpha\bigr| &\leq \delta_{R},
\end{align*}
where $\delta_{R}$ is analogous to $\delta_{\BCl}$ up to the terms $\sigma^{2}$ and $\Sigma$. A detailed definition of $\delta_{R}$ is given in \eqref{def:deltaR}, see also Remark \ref{remark:delta} for the discussion about its dependence on $d$ and n.
\end{theorem}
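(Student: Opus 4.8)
The plan is to reduce the statement about the bootstrap score test to a direct application of Theorem \ref{theorem:bootst}, since the score statistic $\tilde R(\theta')=\|s(\theta')/\sqrt n\|^2$ is, up to a squaring, the squared Euclidean norm of the centered sum of the i.i.d.\ summands $X_i=\partial\log p(y_i;\theta')/\partial\theta'$. First I would record that under $H_0$ we have $\E X_i=0$ (this is the standard score identity, valid at the Kullback--Leibler projection parameter and, in particular, when $\theta_0=\theta'$), so that $s(\theta')=\sum_{i=1}^n X_i=n^{1/2}\Sbar$ with $\Sbar$ as in Section \ref{section:ES_bootstr}. Consequently $\tilde R(\theta')=\|\Sbar\|^2$, and likewise $R^\ast(\theta')=\|\Snast\|^2$ with $\Snast=n^{-1/2}\sum_{i=1}^n \Xs_i$ the Efron bootstrap sum. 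The events $\{\tilde R(\theta')>t\}$ and $\{\|\Sbar\|> \sqrt t\}$ coincide for $t\ge 0$, and the bootstrap quantile $t^\ast_\alpha$ of $R^\ast(\theta')$ equals the square of the $(1-\alpha)$-quantile of $\|\Snast\|$; so passing between the two formulations is purely a change of variable.

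Next I would translate the event $\{\tilde R(\theta')>t^\ast_\alpha\}$ into the ball-class language. Writing $r^\ast_\alpha\define (t^\ast_\alpha)^{1/2}$, we have $\P_{H_0}(\tilde R(\theta')>t^\ast_\alpha)=1-\P_{H_0}(\|\Sbar\|\le r^\ast_\alpha)$, and by construction of the bootstrap quantile $\Ps(\|\Snast\|\le r^\ast_\alpha)\ge 1-\alpha$ (with the reverse inequality holding up to the at most one atom of the bootstrap distribution, which costs nothing here because we only need a one-sided comparison absorbed into $\delta_{\BCl}$). Therefore
\begin{align*}
\bigl|\P_{H_0}(\tilde R(\theta')>t^\ast_\alpha)-\alpha\bigr|
&\le \bigl|\P_{H_0}(\|\Sbar\|\le r^\ast_\alpha)-\Ps(\|\Snast\|\le r^\ast_\alpha)\bigr|
+\bigl|\Ps(\|\Snast\|\le r^\ast_\alpha)-(1-\alpha)\bigr|.
\end{align*}
The second term is at most the size of the largest bootstrap atom, which is itself dominated by $\sup_{B\in\BCl}|\P(\Sbar\in B)-\Ps(\Snast\in B)|$ via a standard anti-concentration-free argument (or can simply be folded into the bound). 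The first term is bounded, on the high-probability event from Theorem \ref{theorem:bootst}, by $\sup_{B\in\BCl}|\P(\Sbar\in B)-\Ps(\Snast\in B)|\le \delta_{\BCl}$, since $\{x:\|x\|\le r^\ast_\alpha\}\in\BCl$ and $r^\ast_\alpha$ is measurable with respect to the bootstrap sample. Here one must apply Theorem \ref{theorem:bootst} with the substitutions dictated by the present context: the role of $\Sigma$ is played by $I(\theta')/n=\Var(X_i)$, the sub-Gaussian variance factor is $\sigma^2_s$, and the condition $\sigma_s^2(d/\sqrt n)C_{1\ast}<\lambda_{\min}(I(\theta'))/n$ is exactly condition \eqref{condit:lambda_n} after this substitution. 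This yields the asserted bound with $\delta_R$ obtained from $\delta_{\BCl}$ by the same substitutions, which is what the statement claims; the explicit formula \eqref{def:deltaR} is just \eqref{ineq:deltaB_2}--\eqref{ineq:deltaB_3_2} rewritten with $\sigma^2\rightsquigarrow\sigma_s^2$ and $\Sigma\rightsquigarrow I(\theta')/n$.

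The only genuinely delicate point, and the step I expect to require the most care, is the handling of the randomness of $r^\ast_\alpha$: it depends on $\{X_i\}_{i=1}^n$ through the bootstrap law, so the comparison $|\P(\|\Sbar\|\le r)-\Ps(\|\Snast\|\le r)|\le\delta_{\BCl}$ must be applied with a data-dependent radius. This is legitimate because Theorem \ref{theorem:bootst} controls the supremum over all balls $B\in\BCl$ simultaneously, so conditionally on the data one may take $r=r^\ast_\alpha$; one just has to be careful that $\P_{H_0}(\|\Sbar\|\le r^\ast_\alpha)$ means $\E[\P(\|\Sbar\|\le r)\big|_{r=r^\ast_\alpha}]$ is not what appears, but rather the unconditional probability, and to bound it one conditions on $\{X_i\}$, applies the uniform-in-$B$ bound, and then takes expectations over the high-probability event (losing the residual $n^{-1}$ which is already subsumed in the failure probability of Theorem \ref{theorem:bootst}). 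Everything else is bookkeeping of constants, which is why the theorem is phrased as ``$\delta_R$ is analogous to $\delta_{\BCl}$''.
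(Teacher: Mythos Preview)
Your approach is correct and coincides with the paper's: the paper's entire proof is the single sentence ``Theorem \ref{theorem:score1} follows from Theorem \ref{theorem:bootst}'' together with the explicit formula \eqref{def:deltaR}, and your reduction---identifying $X_i=\partial\log p(y_i;\theta')/\partial\theta'$, noting $\E X_i=0$ under $H_0$, rewriting $\tilde R(\theta')=\|\Sbar\|^2$ and $R^\ast(\theta')=\|\Snast\|^2$, and invoking Theorem \ref{theorem:bootst} with $\Sigma=I(\theta')/n$, $\sigma^2=\sigma_s^2$---is exactly this. One small refinement to your last paragraph: the resolution you sketch (``condition on $\{X_i\}$, apply the uniform bound, take expectations'') does not work literally, since conditioning on the data makes $\|\Sbar\|$ deterministic; the clean way to handle the data-dependent radius is the standard quantile-sandwich argument---on the high-probability event of Theorem \ref{theorem:bootst}, $r^\ast_\alpha$ is trapped between two \emph{deterministic} quantiles $q_\pm$ of the true law $F(r)=\P(\|\Sbar\|\le r)$ at levels $1-\alpha\pm\delta_{\BCl}$, whence $\P(\|\Sbar\|\le r^\ast_\alpha)$ is squeezed between $F(q_-)-\P(E^c)$ and $F(q_+)+\P(E^c)$---which yields the stated bound (and explains the extra $+n^{-1}$ visible in the analogous $\delta_W$ of \eqref{def:deltaW}).
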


The following statement provides a finite sample version of Rao's score test based on \eqref{eq:Rtest} for testing simple hypothesis $H_{0}: \theta_{0}=\theta^{\prime}$. Here we require also the finite 4-th moment of the score in order to apply the higher-order approximation from Theorem \ref{theorem:first}.  
\begin{theorem}[Nonasymptotic version of Rao's score test]
\label{theorem:score2}
Suppose that $p(x)\equiv p(x,\theta_{0})$ for some $\theta_{0}\in \Theta$, i.e. there is no misspecification in the considered parametric model. 
Let also $\tilde{X}_{i}\define \sqrt{n} \{I(\theta^{\prime})\}^{-1/2}\partial \log p(y_{i};\theta^{\prime})/\partial \theta^{\prime}$ denote the marginal standardized score for the $i$-th observation and $\tilde{\Sigma}\define n^{-1} I(\theta^{\prime})$.  Suppose that $\E |\tilde{X}_{i}^{\otimes 4}|<\infty$, 
then the asymptotic poperty \eqref{eq:Rtest}
 for testing $H_{0}: \theta_{0}=\theta^{\prime}$ can be represented in the finite sample form as follows
\begin{align*}
&\sup\nolimits_{\alpha\in(0,1)}\bigl|\P_{H_{0}}\bigl(R(\theta^{\prime}) > q(\alpha;\chi^{2}_{d}) \bigr) -\alpha\bigr| \leq 
 (\sqrt{6}\beta^{3})^{-1} \|\E (\tilde{X}_{1}^{\otimes 3})\|_{\mathrm{F}}n^{-1/2}
\\
\nonumber
&  
\quad
+  2C_{B,4} \|\tilde{\Sigma}^{-1}\|\|\tilde{\Sigma}\|\bigl\{  
(h_{1}(\beta)+(4\beta^{4})^{-1})
\E\|\tilde{X}_{1}\|^{4}+d^{2}+2d
\bigr\}^{1/2}n^{-1/2}
   \\
   \nonumber
   &\quad+
   (2\sqrt{6})^{-1} \bigl\{
h_{1}(\beta) \E \|\tilde{X}_{1}\|^4
   +h_2(\beta)(d^{2}+2d)\bigr\}n^{-1},
\end{align*}
where $q(\alpha;\chi^{2}_{d})$ denotes the $(1-\alpha)$-quantile of $\chi^{2}_{d}$ distribution. The inequality holds for any $\beta\in  (0,1)$, functions $h_{1},h_{2}$ are defined in \eqref{def:h1h2}, constant $C_{B,4}\geq 9.5$ is described in the statement of Theorem \ref{theorem:first}. 
\end{theorem}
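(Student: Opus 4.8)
The plan is to reduce the two‑sided deviation of the rejection probability from its nominal level to the ball–distance $\Delta_{\BCl}$ between a suitably standardized sum and a standard Gaussian vector, and then to invoke Theorem~\ref{theorem:first}.

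First I would record what correct specification together with $H_{0}$ gives. Write $X_{i}\define \partial\log p(y_{i};\theta^{\prime})/\partial\theta^{\prime}$; these are i.i.d.\ (a fixed function of the i.i.d.\ observations $y_{i}$), and under $H_{0}$, since $\theta^{\prime}=\theta_{0}$ is then the true parameter, the score identity yields $\E X_{i}=0$, while by definition $I(\theta^{\prime})=\Var\{s(\theta^{\prime})\}=\Var\bigl(\sum_{i=1}^{n}X_{i}\bigr)=n\Var(X_{1})$, so $\Var(X_{1})=n^{-1}I(\theta^{\prime})=\tilde{\Sigma}$, which is p.d.\ by assumption. Hence the standardized scores $\tilde{X}_{i}=\sqrt{n}\,\{I(\theta^{\prime})\}^{-1/2}X_{i}=\tilde{\Sigma}^{-1/2}X_{i}$ are i.i.d.\ with $\E\tilde{X}_{i}=0$, covariance $\tilde{\Sigma}^{-1/2}\tilde{\Sigma}\tilde{\Sigma}^{-1/2}=\Id_{d}$, and $\E|\tilde{X}_{i}^{\otimes 4}|<\infty$. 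Putting $\tilde{S}_{n}\define n^{-1/2}\sum_{i=1}^{n}\tilde{X}_{i}=\{I(\theta^{\prime})\}^{-1/2}s(\theta^{\prime})$, we get the identity $R(\theta^{\prime})=\|\tilde{S}_{n}\|^{2}$.

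Next I would pass to $\Delta_{\BCl}$. Let $\Zsigma\sim\mathcal{N}(0,\Id_{d})$, so $\|\Zsigma\|^{2}\sim\chi^{2}_{d}$; because the $\chi^{2}_{d}$ c.d.f.\ is continuous, $\P(\|\Zsigma\|^{2}>q(\alpha;\chi^{2}_{d}))=\alpha$ exactly. Taking complements and rewriting each event as a ball centered at the origin,
\[
\bigl|\P_{H_{0}}\bigl(R(\theta^{\prime})>q(\alpha;\chi^{2}_{d})\bigr)-\alpha\bigr|
=\bigl|\P\bigl(\|\tilde{S}_{n}\|\le \sqrt{q(\alpha;\chi^{2}_{d})}\bigr)-\P\bigl(\|\Zsigma\|\le \sqrt{q(\alpha;\chi^{2}_{d})}\bigr)\bigr|
\le \Delta_{\BCl}(\tilde{S}_{n},\Zsigma),
\]
since $\{x:\|x\|\le r\}\in\BCl$ for every $r\ge 0$ (take center $t=0$); the right-hand side is independent of $\alpha$, so the inequality survives taking $\sup_{\alpha\in(0,1)}$ on the left. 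Finally I would apply Theorem~\ref{theorem:first} to the i.i.d.\ summands $\tilde{X}_{i}$, whose covariance is $\Id_{d}$, with $\Zsigma$ as the Gaussian approximant: the first-order term is the sublinear functional $R_{3}$ of $\E(\Id_{d}^{-1/2}\tilde{X}_{1})^{\otimes 3}=\E\tilde{X}_{1}^{\otimes 3}$, which I bound by $\|\E\tilde{X}_{1}^{\otimes 3}\|_{\mathrm{F}}$; the Berry--Esseen-type term carries the factor $\|\Id_{d}^{-1}\|\,\|\Id_{d}\|=1$, which I majorize by $\|\tilde{\Sigma}^{-1}\|\,\|\tilde{\Sigma}\|\ge 1$ to present it in the stated form; and the $n^{-1}$ term is verbatim the one in Theorem~\ref{theorem:first} with $\E\|\tilde{X}_{1}\|^{4}$. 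Combining these gives exactly the asserted inequality, for every $\beta\in(0,1)$.

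I do not expect a substantive obstacle: the argument is a change of variables followed by a direct appeal to Theorem~\ref{theorem:first}. The two points that must be handled carefully are (i) the identification of $\{R(\theta^{\prime})>q(\alpha;\chi^{2}_{d})\}$ with the complement of an origin-centered ball, which requires the $\chi^{2}_{d}$ quantile to be attained exactly (continuity of its c.d.f.), and (ii) the verification that under correct specification and $H_{0}$ the standardized scores have identity covariance, which is precisely where the information structure $I(\theta^{\prime})=n\Var(X_{1})$ is used.
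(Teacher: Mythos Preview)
Your proposal is correct and follows exactly the approach indicated in the paper, which simply states that Theorem~\ref{theorem:score2} follows from Theorem~\ref{theorem:first}. You have merely spelled out the details: identifying $R(\theta')=\|\tilde S_n\|^2$ for the standardized sum with identity covariance, reducing the quantile comparison to $\Delta_{\BCl}(\tilde S_n,\Zsigma)$, and then invoking Theorem~\ref{theorem:first} with the Frobenius bound on $R_3$ and the harmless replacement of $\|\Id_d^{-1}\|\|\Id_d\|=1$ by $\|\tilde\Sigma^{-1}\|\|\tilde\Sigma\|\geq 1$.
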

%

%
\section{Statements used in the proofs}
\label{sect:cited}
Theorem \ref{theorem:BE16} and Lemma \ref{lemma:AC_GSA} are used in the proofs of the main results; these statements had been derived in our earlier paper \citep{Zhilova2020} (its first version appeared in 2016). Here we provide improved lower bounds for constant $M$ (in Theorem \ref{theorem:BE16}), and we also describe the constants in the error term which appear in the main results. In Remark \ref{ref:AC_opt} we show optimality (with respect to $d$) of the Gaussian anti-concentration bound over set $\BCl$. Lemma \ref{lemma:moment_conditions} is a concise version of Lemmas 3.1 and 3.2 in \citep{Zhilova2020}, which provide conditions on distribution $X_{i}$ that are sufficient for fulfilling condition \eqref{eq:moments_2} of Theorem \ref{theorem:BE16} (as well as condition \eqref{condit:L_moments} of Theorem \ref{theorem:symm}).

Let random vectors $\{X_{i}\}_{i=1}^{n}$ in $\R^{d}$ be i.i.d. and such that $\E |X_{i}^{\otimes K}|<\infty$ for some integer $K\geq 3$,  and   $\Var(X_{i}) = \Sigma$ is p.d. Suppose that there exist i.i.d. approximating random vectors $\{Y_{i}\}_{i=1}^{n}$ such that $\E |Y_{i}^{\otimes K}|<\infty$,
\begin{equation}
\label{eq:moments_2}
\begin{gathered}
\E (X_{i}^{\otimes j})=\E (Y_{i}^{\otimes j})\ \forall j=1,\dots,K-1,\text{ and}\\
 Y_{i}=\Zy_{i}+\Uy_{i} \text{ for independent r.v. $\Zy_{i},\Uy_{i}\in\R^{d}$ s.t.,}\\ 
\text{$\Zy_{i}\sim \mathcal{N}(0,\Sigmaz)$ for a p.d. $\Sigmaz$.}
\end{gathered} 
\end{equation}
Denote $S_{n}\define n^{-1/2}{ \sum\nolimits_{i=1}^{n}} X_{i}$, $\St_{n}\define n^{-1/2}{{\textstyle \sum\nolimits_{i=1}^{n}}} Y_{i},$  and let $\lminz^{2}>0$ be equal to the smallest eigenvalue of the covariance matrix $\Sigmaz$.

\begin{theorem}
\label{theorem:BE16}
Let $\{X_{i}\}_{i=1}^{n}$ and $\{Y_{i}\}_{i=1}^{n}$ meet the conditions above. Suppose, without loss of generality, that $\E X_{i}=\E Y_{i}=0$, then it holds
\begin{align*}
\Delta_{\BCl}(S_{n},\tilde{S}_{n})
&\leq  C_{B,K}{ \left\{\lminz^{-K}\E\left(\|X_{1}\|^{K}+\|Y_{1}\|^{K}\right)\right\}^{{1}/{(K-2)}}}{n^{-1/2}},
\end{align*}
where constant $C_{B,K}=M(K) C_{\ell_{2}}C_{\phi,K}$ depends only on $K$. For the quantity $M(K)$, one can take $M(3)=54.1$, $M(4)=9.5$, $M(6)=2.9$.
\end{theorem}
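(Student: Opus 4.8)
Theorem \ref{theorem:BE16} is the higher-order multivariate Berry--Esseen inequality of \citep{Zhilova2020}, built on the technique of \citet{Bentkus2003BE}; the plan is to recall the structure of that proof and to point to where the sharper admissible values of $M(K)$ are produced. The argument begins with a smoothing reduction adapted to the class $\BCl$ of Euclidean balls: for a scale $\tau>0$, with an absolute $c$ and some $\kappa=\kappa(c)\in(0,1)$,
\begin{align*}
\Delta_{\BCl}(S_{n},\tilde S_{n})\;\leq\;c\Bigl[\sup\nolimits_{B\in\BCl}\bigl|\E h_{B,\tau}(S_{n})-\E h_{B,\tau}(\tilde S_{n})\bigr|+\sup\nolimits_{B\in\BCl}\P\bigl(\tilde S_{n}\in(\partial B)^{c\tau}\bigr)\Bigr]+\kappa\,\Delta_{\BCl}(S_{n},\tilde S_{n}),
\end{align*}
where $h_{B,\tau}=\Ind_{B}\ast\gamma$ with $\gamma$ the density of $\mathcal{N}(0,\tau^{2}\Sigmaz)$; the third term arises because the $c\tau$-shell probability of $S_{n}$ is itself bounded by that of $\tilde S_{n}$ up to $2\Delta_{\BCl}(S_{n},\tilde S_{n})$, and it is absorbed on the left for $\tau$ small. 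Smoothing by a rescaling of exactly the Gaussian component $\Sigmaz$ that is built into $Y_{i}=\Zy_{i}+\Uy_{i}$ is what makes $h_{B,\tau}$ a $C^{\infty}$ function with dimension-free derivative constants, and what makes the smoothing error a spherical-shell probability of $\tilde S_{n}$, which carries the summand $n^{-1/2}\sum_{i}\Zy_{i}\sim\mathcal{N}(0,\Sigmaz)$.

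The smoothing error is then controlled by the Gaussian anti-concentration inequality over balls, Lemma \ref{lemma:AC_GSA}: writing $\tilde S_{n}=G+n^{-1/2}\sum_{i}\Uy_{i}$ with $G\sim\mathcal{N}(0,\Sigmaz)$ independent of the $\Uy_{i}$, and noting that a shifted $c\tau$-shell around a sphere is again such a shell, one has $\sup_{B\in\BCl}\P(\tilde S_{n}\in(\partial B)^{c\tau})\leq\sup_{r\geq0,\,t}\P\bigl(\bigl|\,\|G-t\|-r\,\bigr|\leq c\tau\bigr)\leq C_{\ell_{2}}c\tau/\lminz$, uniformly in $d$. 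The smooth-function term is treated by the Lindeberg telescoping swap, replacing $X_{i}$ by $Y_{i}$ one summand at a time: with $W_{i}$ the partial sum with the $i$-th term deleted, $W_{i}$ is independent of $(X_{i},Y_{i})$, so a Taylor expansion of $h_{B,\tau}(W_{i}+\cdot)$ to order $K-1$ combined with the moment-matching hypothesis \eqref{eq:moments_2} cancels every term of order $\leq K-1$, and the $i$-th residual is $\leq(K!)^{-1}\|h_{B,\tau}^{(K)}\|_{\infty}\,\E(\|X_{i}\|^{K}+\|Y_{i}\|^{K})\,n^{-K/2}$. Summing the $n$ residuals gives a bound of the form $C_{\phi,K}(\tau\lminz)^{-K}\,\E(\|X_{1}\|^{K}+\|Y_{1}\|^{K})\,n^{1-K/2}$.

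Combining, for each admissible $\tau$ one obtains $\Delta_{\BCl}(S_{n},\tilde S_{n})\lesssim C_{\phi,K}(\tau\lminz)^{-K}\varrho\,n^{1-K/2}+C_{\ell_{2}}\tau/\lminz$ with $\varrho=\E(\|X_{1}\|^{K}+\|Y_{1}\|^{K})$. A naive optimization in $\tau$ does not recover the $n^{-1/2}$ rate; one gets it by iterating this estimate through an induction on the sample size (running the swap through blocks of the sum, in the style of \citet{Bentkus2003BE}), whose fixed point is precisely $C_{B,K}\{\lminz^{-K}\varrho\}^{1/(K-2)}n^{-1/2}$ with $C_{B,K}=M(K)C_{\ell_{2}}C_{\phi,K}$. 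The exponent $1/(K-2)$ is the unique one for which $(n^{1-K/2})^{1/(K-2)}=n^{-1/2}$, i.e. $K-1$ matching moments promote the exponent of the ``remainder'' $\varrho\,n^{1-K/2}$ to exactly $-1/2$.

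The step I expect to be the genuine obstacle is keeping the whole chain dimension-free up to the moments $\E\|X_{1}\|^{K}$, $\E\|Y_{1}\|^{K}$: the contractions $h_{B,\tau}^{(K)}(W_{i})(X_{i}^{\otimes K})$ must not be bounded by the crude product of the tensor operator norm with $\|X_{i}\|^{K}$, which would introduce extra powers of $d$, but estimated by exploiting that $\partial B$ is a sphere (an integration by parts turns $h_{B,\tau}^{(K)}$ into derivatives of a surface convolution governed by the one-dimensional normal direction) and then matched with the dimension-free Gaussian anti-concentration over $\BCl$, whose optimality in $d$ is recorded in Remark \ref{ref:AC_opt}. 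Once the structure is fixed, what remains — and what is new here relative to \citep{Zhilova2020} — is purely quantitative: tracking $C_{\ell_{2}}$, the constants $C_{\phi,K}$, the binomial factors in the Taylor expansions, and the geometric loss in the recursion, so as to certify the explicit admissible values $M(3)=54.1$, $M(4)=9.5$, $M(6)=2.9$.
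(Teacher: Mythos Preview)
Your outline captures the overall architecture correctly---Lindeberg telescoping with $K{-}1$ matching moments, a Bentkus-style recursion on the sample size to upgrade the $n^{1-K/2}$ remainder to $n^{-1/2}$, and the Gaussian anti-concentration of Lemma~\ref{lemma:AC_GSA} to control shell probabilities. However, the smoothing mechanism is not the Gaussian convolution $h_{B,\tau}=\Ind_{B}\ast\gamma$ you propose. The proof in \citep{Zhilova2020}, and hence the definition of $C_{\phi,K}$ given immediately after the theorem, uses the radial smoothing $\psi(x;B)=\phi\bigl((\|x-x_{0}\|^{2}-r^{2})/\tilde\epsc\bigr)$ with $\phi$ a smooth univariate step approximation. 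Because this is essentially a one-dimensional construction, the directional-derivative bounds $|\psi^{(j)}(x;B)h^{j}|\leq C_{\phi,1,j}\|h\|^{j}(\epsc/2)^{-j}\Ind\{x\in B^{\epsc}\setminus B\}$ are dimension-free automatically, with no integration-by-parts argument required. Your Gaussian-convolution smoothing does not directly yield such bounds, and the ``surface convolution governed by the one-dimensional normal direction'' fix you gesture at is neither what the cited proof does nor obviously sufficient to recover the same constants. The Gaussian component $\Sigmaz$ of $Y_{i}$ enters not through the smoothing function but only to ensure that $\tilde S_{n}$ has a nondegenerate Gaussian convolution factor, so that Lemma~\ref{lemma:AC_GSA} applies conditionally on the $\Uy_{i}$.

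Second, the paper does not rederive the theorem; it cites \citep{Zhilova2020} and records only the improvement in $M(K)$. That improvement is obtained by re-optimizing, over free parameters $(\mathrm{a},\mathrm{b},M)$, the closing inequality of the proof of Theorem~2.1 in \citep{Zhilova2020supp},
\[
\frac{\mathrm{a}}{M}+\frac{1.5(\mathrm{a}/2)^{-(K-2)}}{(K-2)!}\,\frac{2M+\mathrm{a}}{M}+\frac{4\sqrt{2}\,\mathrm{b}}{(\mathrm{a}/2)^{K-1}}\,\frac{2M+\mathrm{a}}{M\,K!}+\frac{2\sqrt{2}}{\sqrt{K!}\,\mathrm{b}^{K-2}}+2^{(K-3)/(K-2)}\,\frac{2.6}{M^{K-2}}\leq 1,
\]
and exhibiting the admissible triples $(K,M,\mathrm{a},\mathrm{b})\in\{(3,54.1,27.46,14),\,(4,9.5,6.33,8.5),\,(6,2.9,2.07,8.5)\}$. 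You are right that the novelty is purely quantitative, but the concrete mechanism is this parametric optimization of an already-assembled inequality, not a fresh tracking of every constant through the argument.
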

\noindent
 $C_{\ell_{2}}\define\max\{1,\aClbt\}$ for a numeric constant $\aClbt$ given in Lemma \ref{lemma:AC_GSA} further in this section.  
 $C_{\phi,K}\define\max\{1,{C}_{\phi,1,K-2}, {C}_{\phi,1,K-1}\}$ is specified as follows: fix $\epsc>0$ and a Euclidean ball $B= B(x_{0},r)\define \left\{x \in \R^{\dimp}: \|x-x_{0}\|\leq r\right\}$ in $\R^{d}$, let function $\psi: \R^{d}\mapsto \R$ be defined as $\psi(x;B)\define \phi(\tilde{\rho}(x;B)/\tilde{\epsc})$, where $\tilde{\rho}(x;B)= \{\|x-x_{0}\|^{2}-r^{2}\}\Ind\{x\notin B\}$, $\tilde{\epsc}=\epsc^{2}+2r\epsc$, and $\phi(x)$ is a sufficiently smooth approximation of a step function 
 \begin{gather*}
0\leq \phi(x)\leq 1, \quad 
\phi(x)=
\begin{cases}
1,& x\leq 0;\\
0,& x\geq 1.
\end{cases}
 \end{gather*}
 Constants ${C}_{\phi,1,j}>0$ for $j=K-2,K-1$ are such that $\forall B\in \BCl$, $\forall\,x,h\in \R^{d}$ 
$$\bigl|\psi^{(j)}(x;B)h^{j}\bigr|\leq {{C}_{\phi,1,j} \|h\|^{j}}{(\epsc/2)^{-j}}\Ind\{x\in B(x,r+\epsc)\setminus B\}.$$ 
Further details about these terms are provided in Lemma A.3 in  \citep{Zhilova2020supp}.

The smaller values of $M=M(K)$ (compared to $M\geq 72.5$ in  \citep{Zhilova2020}) are obtained by optimizing the following expression w.r.t.  $(\adj,\cmKK,M)$ (this expression is contained in the last inequality in the end of the proof of Theorem 2.1 in \citep{Zhilova2020supp}):
\begin{align*}
 \frac{\adj}{\cM}
 +
 \frac{1.5 (\adj/2)^{-(K-2)} }{(K-2)!}\frac{(2\cM+\adj)}{\cM}
 \nonumber
+
 \frac{4\sqrt{2}\cmKK }{(\adj/2)^{(K-1)}} 
\frac{(2\cM+\adj)}{\cM K! }&
\\+\frac{2\sqrt{2} }{\sqrt{K!}\cmKK^{K-2}}
+  2^{(K-3)/(K-2)}\frac{ 2.6}{M^{K-2}}
& \leq 1,
\end{align*}
we take $(K,M,\adj,\cmKK)\in\{(3, 54.1, 27.46, 14), (4,9.5, 6.33, 8.5), (6,2.9, 2.07, 8.5)\}$.

In the following lemma we study the anti-concentration properties of $Z\sim \mathcal{N}(0,\Sigma)$ over the class $\BCl$. Similar bounds for the standard normal distribution were considered by \cite{Sazonov1972,Ball1993Reverse,Klivans2008lGaussSurface}). 
Let $B=B(x_{0},r)$ and $\epsc>0$. Denote  $B^{\epsc}\define B(x_{0},r+\epsc).$
\begin{lemma}[Anti-concentration inequality for $\ell_{2}$-balls in $\R^{d}$]
\label{lemma:AC_GSA}
Let $Z\sim \mathcal{N}(\mu,\Sigma)$ for arbitrary $\mu\in\R^{d}$ and p.d. $\Sigma$ with the smallest eigenvalue $\lmin^{2}>0$. It holds for any $\epsc>0$ and for a numeric constant $\aClbt>0$
\begin{align}
\nonumber
\sup\nolimits_{B\in\BCl}
\P\bigl(Z\in B^{\epsc}\setminus B\bigl)
&\leq   \epsc\aClbt/\lmin.
\end{align}
\end{lemma}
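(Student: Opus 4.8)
The plan is to reduce the inequality, in two steps, to a dimension-free bound on the density of a single nonnegative random variable. Since $\lmin^{2}=\lambda_{\min}(\Sigma)$, we have $\Sigma\succeq\lmin^{2}\Id$, hence $\Sigma_{1}\define\Sigma-\lmin^{2}\Id$ is positive semidefinite and $Z\overset{d}{=}\mu+\lmin\xi+\Sigma_{1}^{1/2}\eta$ for independent $\xi,\eta\sim\mathcal{N}(0,\Id)$. Writing $B=B(x_{0},r)$ we have $\{Z\in B^{\epsc}\setminus B\}=\{r<\|Z-x_{0}\|\leq r+\epsc\}$, so by the tower property it is enough to bound, uniformly over $w\in\R^{d}$ and $r\geq 0$, the conditional probability $\P\bigl(r<\|w+\lmin\xi\|\leq r+\epsc\bigr)$; rescaling by $\lmin^{-1}$ this equals $\P\bigl(\rho<\|w/\lmin+\xi\|\leq\rho+\epsc/\lmin\bigr)$ with $\rho\define r/\lmin$. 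Thus the lemma follows once we show that for every $v\in\R^{d}$ the random variable $\|v+\xi\|$ (with $\xi\sim\mathcal{N}(0,\Id)$) has a Lebesgue density bounded by an absolute constant $\aClbt$, independent of $d$ and $v$: integrating that density over the interval $(\rho,\rho+\epsc/\lmin]$ then gives $\sup_{B\in\BCl}\P(Z\in B^{\epsc}\setminus B)\leq\epsc\aClbt/\lmin$.

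To establish the density bound I would rotate coordinates so that $v=s\,e_{1}$ with $s=\|v\|\geq 0$. By the coarea formula, the density of $\|v+\xi\|$ at $t>0$ equals the Gaussian surface measure $\int_{\{\|y\|=t\}}(2\pi)^{-d/2}e^{-\|y-v\|^{2}/2}\,d\sigma(y)$; equivalently, writing $\|v+\xi\|^{2}=(s+\xi_{1})^{2}+Q$ with $Q\sim\chi^{2}_{d-1}$ independent of $\xi_{1}$, it is the image under $u\mapsto\sqrt{u}$ of the convolution of the law of $(s+\xi_{1})^{2}$ with the $\chi^{2}_{d-1}$-density. For $s=0$ this is exactly the $\chi_{d}$-density, whose supremum over $t>0$ and $d\geq 1$ is finite by Stirling's formula (it is largest at $d=1$, equal to $\sqrt{2/\pi}$, and tends to $\pi^{-1/2}$ as $d\to\infty$). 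For $s>0$ one argues from the unimodality of the noncentral $\chi$-density together with a direct estimate showing that its maximum stays below an absolute constant uniformly in $s\geq0$ and $d\geq 1$; this is precisely the dimension-free control of the Gaussian surface area of Euclidean balls, cf.\ \citet{Sazonov1972,Ball1993Reverse,Klivans2008lGaussSurface}.

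The delicate point, and the main obstacle, is this last estimate: bounding the density of $\|v+\xi\|$ by one numeric constant uniformly over \emph{all} noncentrality levels $s=\|v\|$ and \emph{all} dimensions. The naive pointwise bound $(2\pi)^{-d/2}e^{-\|y-v\|^{2}/2}\leq(2\pi)^{-d/2}e^{-(t-s)^{2}/2}$ on the sphere $\{\|y\|=t\}$ is far too lossy when $s$ is large, since it ignores that only a vanishingly small fraction of that sphere points toward $v$; one must instead estimate the surface integral (equivalently, the modified Bessel function entering the noncentral $\chi$-density) accurately, and it is from this estimate that the value of $\aClbt$ is read off. Granting it, the two reduction steps above are routine and complete the proof.
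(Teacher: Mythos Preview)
The paper does not prove this lemma here: Section~\ref{sect:cited} states that Lemma~\ref{lemma:AC_GSA} ``had been derived in our earlier paper \citep{Zhilova2020}'' and cites \cite{Sazonov1972,Ball1993Reverse,Klivans2008lGaussSurface} for the standard-normal case, so there is no in-paper proof to compare against.

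Your reduction is correct and is the natural one: split off the isotropic part $\lmin^{2}\Id_{d}$ from $\Sigma$, condition on the remaining Gaussian component to absorb $\mu$ and the anisotropy into a free shift $w$, and rescale to reduce to a uniform bound on the density of $\|v+\xi\|$ with $\xi\sim\mathcal{N}(0,\Id_{d})$. That last step --- the dimension-free Gaussian surface area bound for Euclidean spheres --- is exactly the content of the references the paper itself cites, and you are right to flag it as the nontrivial ingredient rather than try to reprove it. One small comment: you do not actually need unimodality of the noncentral $\chi$-density; the results of \cite{Ball1993Reverse} and \cite{Klivans2008lGaussSurface} give the uniform surface-area bound directly (Ball's inequality yields $\aClbt\leq 4d^{1/4}\cdot d^{-1/4}$-type control, and Klivans--O'Donnell--Servedio give an explicit $O(1)$ constant), so invoking those is already a complete argument.
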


\begin{remark}
\label{ref:AC_opt}
This inequality is sharp w.r.t. dimension $d$. Indeed, consider $Z\sim \mathcal{N}(0,\Id_{d})$, $B=B(0,r)$, and $\epsc>0$, Let $f_{\chi_{d}}(t)$ denote the p.d.f. of $\|Z\|\sim \chi_{d}$, then
$$\sup\nolimits_{r\geq 0} P(Z\in B^{\epsc}\setminus B)/\epsc = \sup\nolimits_{t\geq 0} f_{\chi_{d}}(t) +O(1)= O(1), \quad d\to \infty.$$
\end{remark}

\begin{lemma}
\label{lemma:moment_conditions}

\begin{enumerate}[wide, 
labelwidth=!, 
labelindent=0pt, 
label={\Roman*.}]
\item Let $K=4$ and $X_{i}$ be a random vector in $\R^{d}$ with $\E |X_{i}^{\otimes 4}|<\infty$ and p.d. $\Var X_{i}$. Then there exists  an approximating distribution $Y_{i}$ satisfying \eqref{eq:moments_2} such that the smallest eigenvalue of $\Sigmaz$ corresponding to the normal part of the convolution $Y_{i}$ equals to an arbitrary predetermined number between $0$ and the smallest eigenvalue of $\Var X_{i}$.
\item Now let $K$ be an arbitrary integer number $\geq 3$. 
Suppose that random vector $X_{i}$ is supported in a closed set $A\subseteq \R^{d}$. Let also $\E |X_{i}^{\otimes K+1}|<\infty$ and $\Var X_{i}$ be p.d. Then the existence of the an approximating distribution $Y_{i}$ satisfying \eqref{eq:moments_2} is guaranteed either by continuity of $X_{i}$ or by a sufficiently large cardinality of $X_{i}$'s support, namely, if $X_{i}$ has a discrete probability distribution supported on $M$ points in $\R^{d}$ such that each coordinate of $X_{i}$ is supported on at least $m$ points in $\R$, it is sufficient to require $M\geq 1+(K+1)m^{d-1}$.
\end{enumerate}
\end{lemma}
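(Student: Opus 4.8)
The statement to prove is Lemma~\ref{lemma:moment_conditions}, which asserts the existence of an approximating distribution $Y_{i}=\Zy_{i}+\Uy_{i}$ with $\Zy_{i}\sim\mathcal{N}(0,\Sigmaz)$ satisfying the moment-matching conditions \eqref{eq:moments_2}. Since this is stated in the excerpt to be a concise version of Lemmas~3.1 and 3.2 from \citep{Zhilova2020}, I will reconstruct the underlying construction. The key idea is to look for $\Uy_{i}$ as a discrete random vector supported on finitely many points, and to set up the moment-matching requirements as a linear system in the point masses; the normal summand $\Zy_{i}$ contributes only its (known) moments, so matching $\E(Y_{i}^{\otimes j})$ to $\E(X_{i}^{\otimes j})$ for $j=1,\dots,K-1$ reduces, via the multinomial/convolution formula $\E(Y_{i}^{\otimes j})=\sum_{\ell}\binom{j}{\ell}\E(\Zy_{i}^{\otimes\ell})\otimes_{\mathrm{sym}}\E(\Uy_{i}^{\otimes(j-\ell)})$, to prescribing the mixed moments of $\Uy_{i}$ up to order $K-1$.

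\textbf{Part I ($K=4$).} First I would fix a target value $\lambda\in(0,\lambda_{\min}(\Var X_{i}))$ for the smallest eigenvalue of $\Sigmaz$, and more generally choose a p.d. matrix $\Sigmaz\preceq\Var X_{i}$ with that smallest eigenvalue; set $\Sigma_{U}\define\Var X_{i}-\Sigmaz\succ 0$. The construction of $\Uy_{i}$ needs $\E\Uy_{i}=0$, $\Var\Uy_{i}=\Sigma_{U}$, and a prescribed third moment tensor so that $\E(Y_{i}^{\otimes 3})=\E(X_{i}^{\otimes 3})$; since $\Zy_{i}$ is centered Gaussian its third moment vanishes, so we simply need $\E(\Uy_{i}^{\otimes 3})=\E(X_{i}^{\otimes 3})$. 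Finiteness of $\E|X_{i}^{\otimes 4}|$ guarantees these target moments are finite. The cleanest route is to build $\Uy_{i}$ as a mixture of a Gaussian-type smoothing and a finitely-supported correction, or to invoke directly the known existence result: any finite collection of prescribed moments of orders $1,2,3$ compatible with a genuine p.d. covariance can be realized by \emph{some} probability distribution with a finite fourth moment (e.g. by a discrete distribution on sufficiently many points, solving a finite linear system with a strictly feasible interior point because $\Sigma_{U}$ is p.d.). I would carry this out by first matching the covariance with a symmetric two-point-per-axis construction after an affine change of coordinates diagonalizing $\Sigma_{U}$, then adding a small-mass asymmetric perturbation supported on additional points to install the third-moment tensor while keeping the mean and covariance fixed; smallness of the perturbation mass (controlled by the operator norm of $\E(\Sigma_U^{-1/2}X_i)^{\otimes 3}$) ensures the resulting signed measure is in fact a probability measure.

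\textbf{Part II (general $K\ge 3$, support constraints).} Here I would treat the moment-matching requirement $\E(\Uy_{i}^{\otimes j})$ prescribed for $j=1,\dots,K-1$ (with the prescribed tensors read off from $\E(X_i^{\otimes j})$ and the known Gaussian moments of $\Zy_i$, using the convolution identity above) as a finite-dimensional linear feasibility problem. For the discrete case: if $X_i$ is supported on $M$ points with each coordinate taking at least $m$ values, the number of monomials of degree $\le K-1$ in $d$ variables, after exploiting the product/tensor structure of the independent-coordinate scaffolding, is bounded by roughly $(K+1)m^{d-1}$, so the stated condition $M\ge 1+(K+1)m^{d-1}$ furnishes strictly more support points than linear constraints; a dimension/rank count (the empirical-moment map is onto a neighborhood of the true moment vector because the support is in ``general position'' along each coordinate) then yields a probability vector solving the system — this is where I would cite the Tchakaloff-type / Carathéodory argument for the existence of a quadrature/representing measure. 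For the continuous case one convolves $X_i$ with a small absolutely-continuous kernel and argues that the resulting family of moment vectors covers an open neighborhood, hence contains the required target after a matching adjustment.

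\textbf{Main obstacle.} The delicate point is Part II: showing that the prescribed moment vector lies in the \emph{interior} of the moment cone of measures supported on the allowed point configuration, so that a genuine (nonnegative) probability solution exists rather than merely a signed one. This requires a quantitative ``general position'' / full-rank argument for the Vandermonde-type moment map built from the coordinates' supports — precisely the role of the cardinality bound $M\ge 1+(K+1)m^{d-1}$ and of strict positive-definiteness of $\Var X_i$ (which keeps the target away from the boundary of the cone). Apart from this, the remaining steps — the convolution moment identity, the affine reductions, and bounding the perturbation mass in Part I — are routine, and in any case the full details are available in Lemmas~3.1–3.2 of \citep{Zhilova2020}, to which the argument here reduces.
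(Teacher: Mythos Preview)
The paper does not give a standalone proof of this lemma; it is explicitly presented as a concise restatement of Lemmas~3.1 and~3.2 of \citep{Zhilova2020}, and Part~II is deferred entirely to that reference (as you also do). However, the construction underlying Part~I is carried out in full inside the proof of Theorem~\ref{theorem:first}, and it is quite different from---and considerably simpler than---your proposal. Instead of building $\Uy_{i}$ as a discrete measure solving a moment feasibility system, the paper takes $\Uy_{i}=\alpha_{i}\tilde{X}_{i}$, where $\tilde{X}_{i}$ is an independent copy of $X_{i}$ and $\alpha_{i}$ is a \emph{scalar} random variable, independent of everything else, with $\E\alpha_{i}=0$, $\E\alpha_{i}^{2}=1-\beta^{2}$, $\E\alpha_{i}^{3}=1$. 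Then $Y_{i}=\Zy_{i}+\alpha_{i}\tilde{X}_{i}$ with $\Zy_{i}\sim\mathcal{N}(0,\beta^{2}\Sigma)$ automatically satisfies $\E(Y_{i}^{\otimes j})=\E(X_{i}^{\otimes j})$ for $j=1,2,3$, and $\Sigmaz=\beta^{2}\Sigma$ has smallest eigenvalue $\beta^{2}\lambda_{\min}(\Sigma)$, which ranges over $(0,\lambda_{\min}(\Sigma))$ as $\beta$ ranges over $(0,1)$. The existence of such $\alpha_{i}$ is a one-dimensional truncated moment problem, settled by checking positive semidefiniteness of a $3\times 3$ Hankel matrix (the Curto--Fialkow criterion); the paper even records the minimal admissible $\E\alpha_{i}^{4}$, which is what the downstream error bounds need.

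Your route for Part~I is in principle workable---any finite list of tensors consistent with a p.d.\ covariance is realized by some probability law---but the specific perturbation argument you sketch has a gap. You write that ``smallness of the perturbation mass (controlled by the operator norm of $\E(\Sigma_{U}^{-1/2}X_{i})^{\otimes 3}$) ensures the resulting signed measure is in fact a probability measure,'' but this operator norm is not a priori small; the third-moment tensor of $X_{i}$ can be arbitrarily large relative to $\Sigma_{U}$. To make the perturbation genuinely small you would have to place the correction atoms far from the origin (so that tiny mass carries large third moment with negligible effect on the covariance), and this additional step is missing from your outline. The paper's scalar-multiplier trick sidesteps the whole feasibility discussion, is fully constructive, and yields explicit control on $\E\|Y_{i}\|^{4}$ in terms of $\E\|X_{i}\|^{4}$---exactly what is used in \eqref{ineq:BE_bound}.
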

\section{Proofs for Sections \ref{sect:multivar} and  \ref{sect:symm}}
\label{sect:proof_2}

The following statement is used in the proofs of main results. Inequality \eqref{ineq:H_ortog} was also derived in our earlier paper \citep{Zhilova2020supp} (Lemma A.5).
\begin{lemma}
\label{lemma:symm}
Let $A=\{a_{i_{1},\dots, i_{k}}: 1\leq i_{1},\dots, i_{k}\leq d \} \in {\R^{d}}^{\otimes k}$ be a symmetric tensor, which means that elements $a_{i_{1},\dots, i_{k}}$ of $A$ are invariant with respect to permutations of indices $\{i_{1},\dots, i_{k}\}$. It holds
\begin{align}
\label{ineq:H_ortog_A}
\Bigl| \int\nolimits_{\R^{d}}\inner{\phid^{(k)}(\xv)}{A}
d\xv\Bigr|
&\leq  \sqrt{k!}\|A\|_{\mathrm{F}} \leq \sqrt{k!}\|A\| d^{(k-1)/2}.
\end{align}

For any integer $k\geq 0$
\begin{equation}
\label{ineq:H_ortog}
\Bigl| \int\nolimits_{\R^{d}}\phid^{(k)}(\xv)
\gamma^{k}d\xv\Bigr|\leq  \sqrt{k!}\|\gamma\|^{k}\quad \forall \gamma\in \R^{d}.
\end{equation}
\end{lemma}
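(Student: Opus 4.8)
The plan is to reduce both inequalities to standard properties of Hermite polynomials. Write $\phid^{(k)}(x)=(-1)^{k}H_{k}(x)\,\phid(x)$, where $H_{k}(x)\in{\R^{d}}^{\otimes k}$ is the multivariate (tensor) Hermite polynomial of degree $k$, i.e. $(H_{k}(x))_{i_{1}\cdots i_{k}}=(-1)^{k}\phid(x)^{-1}\partial_{i_{1}}\cdots\partial_{i_{k}}\phid(x)$; equivalently $H_{k}$ is defined through the generating identity $\exp(\inner{t}{x}-\tfrac12\|t\|^{2})=\sum_{k\geq0}(k!)^{-1}\inner{H_{k}(x)}{t^{\otimes k}}$. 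Note $H_{k}(x)$ is a symmetric tensor (partial derivatives commute), and every integral appearing in the statement is absolutely convergent since each partial derivative of $\phid$ has Gaussian decay. The single property of $H_{k}$ I would use is the orthogonality relation
\[
\E\bigl[(H_{k}(Z))_{i_{1}\cdots i_{k}}(H_{k}(Z))_{j_{1}\cdots j_{k}}\bigr]=\sum_{\sigma\in\mathfrak{S}_{k}}\prod_{l=1}^{k}\delta_{i_{l},\,j_{\sigma(l)}},\qquad Z\sim\mathcal{N}(0,\Id_{d}),
\]
which follows by expanding both sides of $\E[\exp(\inner{s}{Z}-\tfrac12\|s\|^{2})\exp(\inner{t}{Z}-\tfrac12\|t\|^{2})]=\exp(\inner{s}{t})$ in powers of $s,t$ and matching coefficients (symmetrizing over indices, which is legitimate since $H_{k}$ is symmetric).

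For the first inequality in \eqref{ineq:H_ortog_A} I would write
\[
\int_{\R^{d}}\inner{\phid^{(k)}(x)}{A}\,dx=(-1)^{k}\int_{\R^{d}}\inner{H_{k}(x)}{A}\,\phid(x)\,dx=(-1)^{k}\,\E\inner{H_{k}(Z)}{A},
\]
and bound $|\E\inner{H_{k}(Z)}{A}|\le(\E\inner{H_{k}(Z)}{A}^{2})^{1/2}$ by Cauchy--Schwarz. Inserting the orthogonality relation and using that $A$ is symmetric (so that reordering the indices of an entry $a_{i_{1}\cdots i_{k}}$ leaves it unchanged),
\[
\E\inner{H_{k}(Z)}{A}^{2}=\sum_{\sigma\in\mathfrak{S}_{k}}\sum_{i_{1},\dots,i_{k}}a_{i_{1}\cdots i_{k}}\,a_{i_{\sigma(1)}\cdots i_{\sigma(k)}}=\sum_{\sigma\in\mathfrak{S}_{k}}\|A\|_{\mathrm{F}}^{2}=k!\,\|A\|_{\mathrm{F}}^{2},
\]
which yields $\bigl|\int_{\R^{d}}\inner{\phid^{(k)}(x)}{A}\,dx\bigr|\le\sqrt{k!}\,\|A\|_{\mathrm{F}}$. (In fact the integral vanishes for $k\geq1$ because $\E H_{k}(Z)=0$, but only the displayed bound is claimed.)

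For the second inequality $\|A\|_{\mathrm{F}}\le\|A\|\,d^{(k-1)/2}$ I would induct on $k$. The base case $k=1$ is the equality $\|A\|_{\mathrm{F}}=\|A\|$ for a vector. For the inductive step, slice $A$ along its first index into symmetric $(k-1)$-tensors $A_{(m)}\define(a_{m,i_{2}\cdots i_{k}})_{i_{2},\dots,i_{k}}$, so that $\|A\|_{\mathrm{F}}^{2}=\sum_{m=1}^{d}\|A_{(m)}\|_{\mathrm{F}}^{2}\le d^{k-2}\sum_{m=1}^{d}\|A_{(m)}\|^{2}$ by the inductive hypothesis; then for each $m$ pick unit vectors $\gamma^{(m)}_{2},\dots,\gamma^{(m)}_{k}$ and a sign $\varepsilon_{m}\in\{\pm1\}$ with $\|A_{(m)}\|=\varepsilon_{m}\inner{A}{e_{m}\otimes\gamma^{(m)}_{2}\otimes\cdots\otimes\gamma^{(m)}_{k}}$, whence $\sum_{m}\|A_{(m)}\|^{2}=\sum_{m}\inner{A}{(\varepsilon_{m}\|A_{(m)}\|e_{m})\otimes\gamma^{(m)}_{2}\otimes\cdots\otimes\gamma^{(m)}_{k}}\le\|A\|\sum_{m}\|A_{(m)}\|$ by definition of the operator norm, and Cauchy--Schwarz gives $\sum_{m}\|A_{(m)}\|\le\sqrt{d}\,\bigl(\sum_{m}\|A_{(m)}\|^{2}\bigr)^{1/2}$; solving the resulting inequality yields $\sum_{m}\|A_{(m)}\|^{2}\le d\,\|A\|^{2}$ and hence $\|A\|_{\mathrm{F}}^{2}\le d^{k-1}\|A\|^{2}$. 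Finally, \eqref{ineq:H_ortog} is the special case of \eqref{ineq:H_ortog_A} applied to the symmetric tensor $A=\gamma^{\otimes k}$, for which $\inner{\phid^{(k)}(x)}{\gamma^{\otimes k}}=\phid^{(k)}(x)\gamma^{k}$ and $\|\gamma^{\otimes k}\|_{\mathrm{F}}=\|\gamma\|^{k}$ (the case $k=0$ being the normalization $\int_{\R^{d}}\phid(x)\,dx=1$).

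The only real obstacle I foresee is bookkeeping around the Hermite orthogonality relation: stating it in the correct multi-index form and checking that, once $A$ is symmetric, the sum over the $k!$ permutations collapses to $k!\,\|A\|_{\mathrm{F}}^{2}$. Everything else is Cauchy--Schwarz and a short induction.
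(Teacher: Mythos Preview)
Your proof is correct and follows essentially the same route as the paper: Rodrigues' formula $\phid^{(k)}(x)=(-1)^{k}H_{k}(x)\phid(x)$, Cauchy--Schwarz (the paper calls it H\"older), Hermite orthogonality to obtain $\E\inner{H_{k}(Z)}{A}^{2}=k!\,\|A\|_{\mathrm{F}}^{2}$, then the Frobenius--operator norm comparison, and finally $A=\gamma^{\otimes k}$ for \eqref{ineq:H_ortog}. The only difference is that you supply self-contained arguments---the generating-function derivation of the orthogonality relation and the slice-and-induct proof of $\|A\|_{\mathrm{F}}\le\|A\|\,d^{(k-1)/2}$---where the paper simply cites \cite{Grad1949noteHermite} and \cite{Wang2017operator}.
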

\begin{proof}[Proof of Lemma \ref{lemma:symm}]
Rodrigues' formula for the Hermite polynomials $\phid^{(j)}(x)=(-1)^{j}H_{j}(\xv)\phid(x)$,  orthogonality of the Hermite polynomials (see \cite{Grad1949noteHermite}), and H\"older's inequality imply
\begin{align*}
&\Bigl| \int\nolimits_{\R^{d}}\inner{\phid^{(k)}(\xv)}{A}
d\xv\Bigr|=
\Bigl| \int\nolimits_{\R^{d}} \inner{H_{k}(\xv)}{A}\phid(x)
d\xv\Bigr|
\\&\leq
\Bigl| \int\nolimits_{\R^{d}} |\inner{H_{k}(\xv)}{A}|^{2}\phid(x)
d\xv\Bigr|^{1/2}
\leq\sqrt{k!}  \|A\|_{\mathrm{F}} \leq \sqrt{k!}\|A\| d^{(k-1)/2}.
\end{align*}
The last inequality  follows from the relations between the Frobenius and the operator norms, see \cite{Wang2017operator}. Inequality 
\eqref{ineq:H_ortog} is obtained by taking $A=\gamma^{\otimes k}$.
\end{proof}
\begin{proof}[Proof of Theorem \ref{theorem:first}]
Take $\St_{n} \define n^{-1/2}\sum\nolimits_{i=1}^{n}Y_{i}$ for i.i.d. 
\begin{equation}
\label{def:Y_i}
Y_{i}\define Z_{i}+\alpha_{i}\tilde{X}_{i}
\end{equation}
 such that 
\begin{equation}
\label{eq:Y_moments}
\E (Y_{i}^{\otimes j})=\E (X_{i}^{\otimes j})\quad \forall j\in\{1,2,3\},
\end{equation}
where $\tilde{X}_{i}$ is an i.i.d. copy of $X_{i}$, $Z_{i}\sim\mathcal{N}(0,\beta^{2}  \Var X)$ is independent from $\tilde{X}_{i}$, $\beta$ is an arbitrary number in $(0,1)$, scalar random variable $\alpha_{i}$ is independent from all other random variables and is such that  
\begin{equation}
\label{eq:alpha_moments}
\E\alpha_{i}=0, \quad \E\alpha_{i}^2= \betau^{2}\define 1-\beta^{2}, \quad\E\alpha_{i}^3=1, \quad \E\alpha_{i}^{4}=\betau^{4}+\betau^{-2}.
\end{equation}
 Such random variable $\alpha_{i}$ exists $\forall \beta\in(0,1)$ due to the criterion by \cite{Curto1991recursiveness}.  Indeed, the existence of a probability distribution with moments \eqref{eq:alpha_moments} is equivalent to the p.s.d. property of the corresponding Hankel matrix, which is ensured by taking the smallest admissible $\E\alpha_{i}^{4}\define \betau^{4}+\betau^{-2}$:
 \begin{align*}
 \det \begin{pmatrix}
 1&0&\betau^{2}\\
 0& \betau^{2}& 1\\
\betau^{2} & 1&\E\alpha_{i}^{4}
 \end{pmatrix} &=  \betau^{2} \{\E\alpha_{i}^{4}- \betau^{4}-\betau^{-2}\}\geq 0.
\end{align*}
Denote for $i\in\{1,\dots,n\}$ the standardized versions of the terms in $Y_{i}$ as follows:
$$\Ut_{i}\define n^{-1/2}\beta^{-1}\alpha_{i}\Sigma^{-1/2}\tilde{X}_{i},\quad \Zt_{i}\define n^{-1/2}\beta^{-1}\betau Z_{0,i},$$ where $Z, Z_{0,i}\sim \mathcal{N}(0,\Id_{d})$ are  i.i.d. and  independent from all other random variables. Let also 
$$\st_{l}\define \sum\nolimits_{i=1}^{l-1}\Zt_{i}+\sum\nolimits_{i=l+1}^{n}\Ut_{i}$$ 
for $l=1,\dots,n$, where the sums are taken equal zero if index $i$ runs beyond the specified range.  Random vectors $\st_{l}$ are independent from  $\Ut_{l}$, $\Zt_{l}$  and 
$\st_{l}+ \Zt_{l}=\st_{l+1}+ \Ut_{l+1},\quad l=1,\dots,n-1.$
This allows to construct the telescopic sum between $f(\tsum_{i=1}^{n}\Ut_{i})$  and $f(\tsum_{i=1}^{n}\Zt_{i})$ for an arbitrary function $f: \R^{d}\mapsto \R $. Indeed, 
 $f(\tsum_{i=1}^{n}\Ut_{i}) -f(\tsum_{i=1}^{n}\Zt_{i})
=f(\st_{1}+\Ut_{1}) -f(\st_{n}+\Zt_{n})
=\sum\nolimits_{l=1}^{n}\{f(\st_{l}+\Ut_{k}) -f(\st_{l}+\Zt_{l})\}.$

In the proofs we use also Taylor's  formula \eqref{eq:Taylor}: for a sufficiently smooth function $f:\R^{d}\mapsto \R$ and $x,h\in\R^{d}$
\begin{align}
\label{eq:Taylor}
\begin{split}
f(x+h)&=\sum\nolimits_{j=0}^{s}f^{(j)}(x)h^{j}/j! 
+\E(1-\tau)^{s}f^{(s+1)}(x+\tau h)h^{s+1}/{s!},
\end{split}
\end{align}
where $\tau\sim U(0,1)$ is independent from all other random variables.

Let  $B\in\BCl$ and $B^{\prime}\define \{x\in \R^{d}: \beta\Sigma^{1/2} x\in B\}$ denote the transformed version of $B$ after the standardization. It holds 
\begin{align}
\nonumber
&\P(\St_{n} \in B)-\P(\Zsigma \in B)
\\
\nonumber
&=\E\Bigl\{ \P\bigl(Z+\tsum_{l=1}^{n}\Ut_{l}\in B^{\prime}\vert \{\Ut_{l}\}_{l=1}^{n}\bigr)-\P\bigl(Z+\tsum_{l=1}^{n}\Zt_{l}\in B^{\prime}\vert \{\Zt_{l}\}_{l=1}^{n}\bigr)\Bigr\}
\\
\label{eq:tel}
& =
\sum\nolimits_{l=1}^{n}\E \int\nolimits_{B^{\prime}}\phid(t-\st_{l}-\Ut_{l})-\phid(t-\st_{l}-\Zt_{l}) dt
\\
\label{eq:Taylorapp}
&=
\sum\nolimits_{l=1}^{n}\sum\nolimits_{j=0}^{3}(j!)^{-1}(-1)^{j}\E \int\nolimits_{B^{\prime}}\phid^{(j)}(t-\st_{l})\Ut_{l}^{j}-\phid^{(j)}(t-\st_{l})\Zt_{l}^{j} dt
+R_{4}
\\&=-6^{-1} \sum\nolimits_{l=1}^{n}\E \int\nolimits_{B^{\prime}}\phid^{(3)}(t-\st_{l})\Ut_{l}^{3} dt +R_{4}
\label{eq:proof1}
\\
&
\leq
\beta^{-3} 6^{-1/2} n^{-1/2}R_{3}
 + |R_{4}|.
\label{eq:proof1_R4}
 \end{align}
 \eqref{eq:tel} is obtained by applying the telescopic sum to the (conditional) probability set function, that is $f(x)\coloneqq \P\bigl(Z+x\in B^{\prime}\bigr)=\int_{B^{\prime}}\phid(t-x)dt$. 
 
 In \eqref{eq:Taylorapp} we expand $\phid(t-\st_{l}+x)$ around 0 w.r.t. $x=-\Ut_{l}$ and $x=-\Zt_{l}$ using Taylor's formula \eqref{eq:Taylor} for $s=4$. \eqref{eq:proof1} follows from mutual independence between  $\Ut_{l}$, $\Zt_{l}$, $\st_{l}$, from Fubini's theorem, and from the property that the first two moments of $\Ut_{l}$, $\Zt_{l}$ are equal two each other:
 \begin{equation}
 \label{eq:moments_1}
 \E(\Ut_{l}^{\otimes j})= \E(\Zt_{l}^{\otimes j}),\quad \text{for } j=1,2.
 \end{equation}
 The term $R_{3}$ is specified as follows: 
 \begin{gather}
 \label{def:R3}
R_{3}\define \sup\nolimits_{B\in \BCl}\{- 6^{-1/2}\inner {\E (\Sigma^{-1/2} X_{1})^{\otimes 3}}{V_{B}}\}
\end{gather}
for
$V_{B} \define n^{-1}\sum\nolimits_{l=1}^{n}\E \int\nolimits_{B^{\prime}}\phid^{(3)}(t-\st_{l}) dt$. This representation of $R_{3}$ follows from mutual independence between  $\Ut_{l}$, $\Zt_{l}$, $\st_{l}$ and from the expressions for the third order moments of $\Ut_{l}$, $\Zt_{l}$:
 \begin{equation}
 \label{eq:moments_2}
\E\{ (\alpha_{i}\tilde{X}_{i})^{\otimes 3}\} = \E (\tilde{X}_{i}^{\otimes 3}),\quad  \E (\Zt_{i}^{\otimes 3})=0.
 \end{equation}
Furthermore, by inequalities \eqref{ineq:H_ortog_A} in Lemma \ref{lemma:symm}, it holds for the summands in $R_{3}$:
\begin{align}
\nonumber
6^{-1/2}\E \int\nolimits_{B^{\prime}}\inner{\phid^{(3)}(t-\st_{l})}{\E (\Sigma^{-1/2} X_{1})^{\otimes 3}} dt
&\leq 
\|\E (\Sigma^{-1/2} X_{1})^{\otimes 3}\|_{\mathrm{F}}
\\&\leq \|\E (\Sigma^{-1/2} X_{1})^{\otimes 3}\| d.
\label{ineq:F_op}
 \end{align}
In addition, let $N$ denote the number of nonzero elements in $\E (\Sigma^{-1/2} X_{1})^{\otimes 3}$. If $\|\E (\Sigma^{-1/2} X_{1})^{\otimes 3}\|_{\max}\leq m_{3}$, then $|R_{3}|\leq m_{3}\sqrt{N}$, which can be smaller than the term in \eqref{ineq:F_op} if $N\leq d^{2}$. If all the coordinates of $X_{1}$ are mutually independent, then $N\leq d$.

Below we consider $R_{4}$ equal to the remainder term in expansions \eqref{eq:Taylorapp}
\begin{align}
\label{eq:R4_in}
R_{4} &\define \sum\nolimits_{l=1}^{n}6^{-1}\E (1-\tau)^{3}\int\nolimits_{B^{\prime}}\phid^{(4)}(t-\st_{l}-\tau\Ut_{l})\Ut_{l}^{4}dt
\\
\nonumber
&\quad-\sum\nolimits_{l=1}^{n}6^{-1}\E (1-\tau)^{3}\int\nolimits_{B^{\prime}}\phid^{(4)}(t-\st_{l}-\tau\Zt_{l})\Zt_{l}^{4}dt
\\
\label{ineq:R4_H}
& 
\leq   (n\beta^{4} \sqrt{4!})^{-1}\{ \E \|\Sigma^{-1/2}U_{1}\|^4+\betau^{4}\E\| Z_{0,1}\|^{4}\}
\\
& 
=   (n\beta^{4} \sqrt{4!})^{-1} \bigl\{\betau^{4}( \E \| \Sigma^{-1/2}  X_{1}\|^4+2d+d^{2})+\betau^{-2} \E \| \Sigma^{-1/2}  X_{1}\|^4\bigr\}.
\label{eq:R4}
\end{align}
\eqref{ineq:R4_H} follows from \eqref{ineq:H_ortog} in Lemma \ref{lemma:symm}.

Now we consider the following term
\begin{align}
\nonumber
&\bigl|\P(S_{n}\in B)-\P(\St_{n}\in B)\bigr|\\
\nonumber &\leq 
2C_{B,4} \|\Sigma^{-1}\|\bigl\{  
\beta^{-4}(\betau^{4}+\betau^{-2}+1/4)\E\|X_{1}\|^{4}
+
\|\Sigma\|^{2}(2d+d^{2})
\bigr\}^{1/2}n^{-1/2}
\\&\leq
\label{ineq:BE_bound}
2C_{B,4} \|\Sigma^{-1}\|\|\Sigma\|\bigl\{  
\beta^{-4}(\betau^{4}+\betau^{-2}+1/4)\E\|\Sigma^{-1/2}X_{1}\|^{4}
+(2d+d^{2})
\bigr\}^{1/2}n^{-1/2},
\end{align}
where constant $C_{B,4}=M(4)C_{\ell_{2}}C_{\phi,4}\geq 9.5$. This inequality follows from Theorem 2.1 in \citep{Zhilova2020}, which is based on the Berry--Esseen inequality by \cite{Bentkus2003BE}. We discuss this result in Section \ref{sect:cited}, and explain the source of the constants $M(4),C_{\ell_{2}},C_{\phi,4}$. Bounds \eqref{ineq:BE_bound}, \eqref{eq:proof1}, and \eqref{eq:R4} lead to the resulting statement.
\end{proof}
\begin{proof}[Proof of Theorem \ref{theorem:multivar_gen}] 
We begin with the proof of the second inequality in the statement.  
Here we employ and modify the arguments in the proof of Theorem \ref{theorem:first}.  
Firstly we construct the sums $\St_{n}, \St_{T,n}$; take
\begin{equation}
\label{eq:Stn_def}
\St_{n} \define n^{-1/2}\sum\nolimits_{i=1}^{n}Y_{i}, \quad \St_{T,n}\define n^{-1/2}\sum\nolimits_{i=1}^{n}Y_{T,i}
\end{equation} 
for  $Y_{i}\define Z_{i}+U_{i},\  Y_{T,i}\define Z^{\prime}_{i}+U_{T,i}$ 
such that random vectors $Z_{i}, U_{i},Z^{\prime}_{i}, U_{T,i}$ are independent from each other, 
\begin{equation}
\label{eq:Y_moments_T}
\E (Y_{i}^{\otimes j})=\E (X_{i}^{\otimes j}),\quad \E (Y_{T,i}^{\otimes j})=\E (T_{i}^{\otimes j}) \quad \forall j\in\{1,2,3\},
\end{equation}
 $Z_{i}, Z_{i}^{\prime}\sim\mathcal{N}(0,\beta^{2}\lmin^{2} \Id_{d})$ for $\lmin^{2}>0$ equal to the minimum of the smallest eigenvalues of $\Sigmax$ and $\SigmaT$. 
 \begin{equation}
 \label{def:Ui_T}
 U_{i}\define \alpha_{i}\tilde{X}_{i}+\beta \{\Sigmax-\lmin^{2}\Id_{d}\}^{1/2}Z_{0,i},\quad U_{T,i}\define \alpha_{i}^{\prime}\tilde{T}_{i}+\beta \{\SigmaT-\lmin^{2}\Id_{d}\}^{1/2}Z_{0,i}^{\prime},
 \end{equation}
where $\beta$ is an arbitrary number in $(0,1)$, $ \alpha_{i}$ and its i.i.d. copy $\alpha_{i}^{\prime}$ are taken as in \eqref{eq:alpha_moments}, $Z_{0,i}\sim \mathcal{N}(0, \Id_{d})$ is independent from all other random variables, and $\tilde{X}_{i},\tilde{T}_{i},Z_{0,i}^{\prime}$ are i.i.d. copies of ${X}_{i},{T}_{i},Z_{0,i}$ respectively. 

Similarly to \eqref{ineq:BE_bound}, by Theorem \ref{theorem:BE16},
\begin{align*}
&\bigl|\P(S_{n}\in B)-\P(\St_{n}\in B)\bigr|+\bigl|\P(S_{T,n}\in B)-\P(\St_{T,n}\in B)\bigr|
\\&\leq
4 C_{B,4}\lmin^{-2}\bigl[
2\beta^{-4}(\betau^{4}+\betau^{-2})\{\E\|X_{1}\|^{4}+\E\|T_{1}\|^{4}\}
\\&\quad
+(2d+d^{2})(1+ 2\|\Sigmax\|^{2}+2\|\SigmaT\|^{2})
\bigr]^{1/2}n^{-1/2}.
\end{align*}
Consider the term $\bigl|\P( \St_{n}\in B)-\P(\St_{T,n}\in B)\bigr|$. Denote for $i\in\{1,\dots,n\}$ the standardized versions of the terms in $Y_{i},Y_{T,i}$ as follows:
$$\Ut_{i}\define n^{-1/2}\beta^{-1}\lmin^{-1}U_{i},\quad \Ut_{T,i}\define n^{-1/2}\beta^{-1}\lmin^{-1}U_{T,i}.$$
Let also 
$$\st_{l}\define \sum\nolimits_{i=1}^{l-1} \Ut_{T,i}+\sum\nolimits_{i=l+1}^{n}\Ut_{i}$$ 
for $l=1,\dots,n$, where the sums are taken equal zero if index $i$ runs beyond the specified range.  Random vectors $\st_{l}$ are independent from  $\Ut_{l}$, $\Ut_{T,l}$  and 
$\st_{l}+ \Ut_{T,l}=\st_{l+1}+ \Ut_{l+1},\quad l=1,\dots,n-1.$

Let $B^{\prime}\define \{x\in \R^{d}: \beta\lmin x\in B\}$ denote the transformed version of $B\in\BCl$ after the standardization. It holds similarly to \eqref{eq:tel}-\eqref{eq:proof1_R4}
\begin{align}
\nonumber
&\P(\St_{n} \in B)-\P(\St_{T,n} \in B)
\\
\nonumber
&=\E\Bigl\{ \P\bigl(Z+\tsum_{l=1}^{n}\Ut_{l}\in B^{\prime}\vert \{\Ut_{l}\}_{l=1}^{n}\bigr)-\P\bigl(Z+\tsum_{l=1}^{n}\Ut_{T,l}\in B^{\prime}\vert \{\Ut_{T,l}\}_{l=1}^{n}\bigr)\Bigr\}
\\
\nonumber
& =
\sum\nolimits_{l=1}^{n}\E \int\nolimits_{B^{\prime}}\phid(t-\st_{l}-\Ut_{l})-\phid(t-\st_{l}-\Ut_{T,l}) dt
\\
\label{eq:Taylor_T}
&=
\sum\nolimits_{l=1}^{n}\sum\nolimits_{j=0}^{3}(j!)^{-1}(-1)^{j}\E \int\nolimits_{B^{\prime}}\phid^{(j)}(t-\st_{l})\Ut_{l}^{j}-\phid^{(j)}(t-\st_{l})\Ut_{T,l}^{j} dt
+R_{4}
\\\nonumber
&=2^{-1} \sum\nolimits_{l=1}^{n}\E \int\nolimits_{B^{\prime}}\inner{\phid^{(2)}(t-\st_{l})}{\Ut_{l}^{\otimes 2}-\Ut_{T,l}^{\otimes 2}} dt
\\&\quad-6^{-1} \sum\nolimits_{l=1}^{n}\E \int\nolimits_{B^{\prime}}\inner{\phid^{(3)}(t-\st_{l})}{\Ut_{l}^{\otimes3}-\Ut_{T,l}^{\otimes3}} dt +R_{4,T}
\nonumber
\\
&
\leq \beta^{-2} 2^{-1/2} \lmin^{-2}\|\Sigmax-\SigmaT\|_{\mathrm{F}}+
\beta^{-3} 6^{-1/2} n^{-1/2}R_{3,T}
 + |R_{4,T}|,
\label{ineq:one_T}
 \end{align}
where
\begin{gather}
 \label{def:R3_T}
R_{3,T}\define \sup\nolimits_{B^{\prime}\in \BCl}\{- 6^{-1/2}\lmin^{-3}\inner {\E (X_{1}^{\otimes 3})-\E (T_{1}^{\otimes 3})}{V_{3,T,B}}\}
\end{gather}
for $V_{3,T,B}\define n^{-1}\sum\nolimits_{l=1}^{n}\E \int\nolimits_{B^{\prime}}\phid^{(3)}(t-\st_{l,T}) dt.$
By \eqref{ineq:H_ortog_A} in Lemma \ref{lemma:symm}, it holds for the summands in $R_{3,T}$:
\begin{align}
\nonumber
&6^{-1/2}\lmin^{-3}\E \int\nolimits_{B^{\prime}}\inner{\phid^{(3)}(t-\st_{l})}{\E (X_{1}^{\otimes 3})-\E (T_{1}^{\otimes 3})} dt
\\&\leq 
\lmin^{-3}\|\E (X_{1}^{\otimes 3})-\E (T_{1}^{\otimes 3})\|_{\mathrm{F}}
\leq\lmin^{-3} \|\E (X_{1}^{\otimes 3})-\E (T_{1}^{\otimes 3})\| d.
\label{ineq:F_op_T}
 \end{align}
Let $N_{T}$ denote the number of nonzero elements in $\E (X_{1}^{\otimes 3})-\E (T_{1}^{\otimes 3})$. If $\lmin^{-3}\|\E (X_{1}^{\otimes 3})-\E (T_{1}^{\otimes 3})\|_{\max}\leq m_{3,T}$, then $|R_{3,T}|\leq m_{3,T}\sqrt{N_{T}}$, which can be smaller than the terms in \eqref{ineq:F_op_T} if $N_{T}\leq d^{2}$. If all the coordinates of $X_{1}$ and $T_{1}$ are mutually independent, then $N_{T}\leq d$.

Consider $R_{4,T}$ equal to the remainder term in Taylor expansions in \eqref{eq:Taylor_T}
\begin{align}
\nonumber
&R_{4,T} \define \sum\nolimits_{l=1}^{n}6^{-1}\E (1-\tau)^{3}\int\nolimits_{B^{\prime}}\phid^{(4)}(t-\st_{l}-\tau\Ut_{l})\Ut_{l}^{4}dt
\\
\nonumber
&\quad-\sum\nolimits_{l=1}^{n}6^{-1}\E (1-\tau)^{3}\int\nolimits_{B^{\prime}}\phid^{(4)}(t-\st_{l}-\tau\Ut_{T,l})\Ut_{T,l}^{4}dt
\\
\label{ineq:R4_H_T}
& 
\leq   (n\beta^{4}\lmin^{4} \sqrt{4!})^{-1}4\bigl\{ (\E \|X_{1}\|^4+\E \|T_{1}\|^4)(\betau^{4}+\betau^{-2})
\\
\nonumber
&\quad+\beta^{4}(d^{2}+2d)(\|\Sigmax\|^{2}+\|\SigmaT\|^{2}) \bigr\}.
\end{align}
\eqref{ineq:R4_H_T} follows from \eqref{ineq:H_ortog} in Lemma \ref{lemma:symm}, and from definitions \eqref{def:Ui_T} of $U_{i},U_{T,i}$. 
 Inequalities \eqref{ineq:one_T}, \eqref{ineq:F_op_T}, and \eqref{ineq:R4_H_T} lead to the resulting bound. 

The first part of the statement, for $\Var X_{1}=\Var T_{1}$, is derived similarly. Here  \eqref{def:Ui_T} is modified to
\begin{equation*}
 U_{i}\define \alpha_{i}\tilde{X}_{i},\quad U_{T,i}\define \alpha_{i}^{\prime}\tilde{T}_{i},
 \end{equation*}
and  $Z_{i}, Z_{i}^{\prime}\sim\mathcal{N}(0,\beta^{2} \Var X_{1})$, as in the proof of Theorem \ref{theorem:first}.
\end{proof}
\begin{proof}[Proof of Theorem \ref{theorem:first_HS}]
Here one can take w.l.o.g. $\|\gamma\|=1$. 
 We proceed similarly to the proof of Theorem \ref{theorem:first} above. 
 Let $\phid_{1}$ denote the p.d.f. of $\mathcal{N}(0,1)$ in $\R^{1}$, and let $\St_{n}$ be as in Theorem \ref{theorem:first}.
\begin{align}
\nonumber
&
\P( \gamma^{T}\St_{n} \leq x)-\P(\gamma^{T}\Zsigma\leq x )
\\\
\nonumber
&=\E\bigl\{ \P\bigl( \gamma^{T}\{Z+ \tsum_{l=1}^{n}\Ut_{l}\}\leq \beta^{-1}x\vert \{\Ut_{l}\}_{l=1}^{n}\bigr)
\\&\quad-\P\bigl( \gamma^{T}\{Z+\tsum_{l=1}^{n}\Zt_{l}\}\leq \beta^{-1}x\vert \{\Zt_{l}\}_{l=1}^{n}\bigr)\bigr\}
\nonumber
\\
\nonumber
& =
\sum\nolimits_{l=1}^{n}\E \int\nolimits_{-\infty}^{x/\beta}\phid_{1}(t- \gamma^{T}\{\st_{l}+\Ut_{l}\})-\phid_{1}(t- \gamma^{T}\{\st_{l}+\Zt_{l}\}) dt
\\
\nonumber
&=-6^{-1} \sum\nolimits_{l=1}^{n}\E \int\nolimits_{-\infty}^{x/\beta}\phid_{1}^{(3)}(t-\gamma^{T}\st_{l})(\gamma^{T}\Ut_{l})^{3} dt +R_{1,4}
\nonumber
\\
&
\leq
\beta^{-3} 6^{-1/2}n^{-1/2} R_{1,3}
 + |R_{1,4}|,
 \end{align}
 where 
$R_{1,3}\define \sup\nolimits_{x,\gamma}\{- 6^{-1/2} \E (\gamma^{T}\Sigma^{-1/2} X_{1})^{ 3}V_{1,x,\gamma}\}$ 
for \\$V_{1,x,\gamma}\define n^{-1}\sum\nolimits_{l=1}^{n}\E \int\nolimits_{-\infty}^{x/\beta}\phid_{1}^{(3)}(t-\gamma^{T}\st_{l}) dt \leq  \sqrt{3!}.$ 
Hence
$$|R_{1,3}|\leq  \| \E (\Sigma^{-1/2} X_{1})^{\otimes 3}\| =\sup\nolimits_{\gamma\in \R : \|\gamma\|=1} \E (\gamma^{T}\Sigma^{-1/2} X_{1})^{3}.$$
The remainder term $R_{1,4}$ is defined is follows
\begin{align}
\nonumber
&R_{1,4} \define \sum\nolimits_{l=1}^{n}6^{-1}\E (1-\tau)^{3}\int\nolimits_{-\infty}^{x/\beta}\phid_{1}^{(4)}(t-\gamma^{T}\{\st_{l}+\tau\Ut_{l}\})\Ut_{l}^{4}dt
\\
\nonumber
&\quad-\sum\nolimits_{l=1}^{n}6^{-1}\E (1-\tau)^{3}\int\nolimits_{-\infty}^{x/\beta}\phid_{1}^{(4)}(t-\gamma^{T}\{\st_{l}+\tau\Zt_{l}\})\Zt_{l}^{4}dt
\\
\nonumber
& 
=   (n\beta^{4} \sqrt{4!})^{-1}\{ \E (\gamma^{T}\Sigma^{-1/2}U_{1})^4+\betau^{4}\E( \gamma^{T}Z_{0,1})^{4}\}
\\
\nonumber
&\leq 
 (n\beta^{4} \sqrt{4!})^{-1}\{
 3\betau^{4}+ (\betau^{4}+\betau^{-2})\| \E (\Sigma^{-1/2} X_{1})^{\otimes 4}\|
 \}.
\end{align}

It holds similarly to \eqref{ineq:BE_bound} 
\begin{align}
\label{ineq:Delta_th_2}
 \Delta_{\HCl}(S_{n},\St_{n})
 & \leq
C_{H,4} \beta^{-2}\bigl\{  
(\betau^{4}+\betau^{-2}+1)\| \E (\Sigma^{-1/2} X_{1})^{\otimes 4}\|+3-3\betau^{4}
\bigr\}^{1/2}n^{-1/2},
\end{align}
where $C_{H,4}=M(4) \tilde{C}_{\phi}$, $M(4)= 9.5$. This term is analogous to  $C_{B,4}$ in \eqref{ineq:BE_bound}, it comes from Theorem \ref{theorem:BE16}, here on can take  $C_{\ell_{2}}=1$.
\end{proof}
Proof of Theorem \ref{theorem:HS_var} is analogous to the proofs of Theorems \ref{theorem:multivar_gen} and  \ref{theorem:first_HS}.

\begin{proof}[Proof of Theorem \ref{theorem:symm}] We follow the scheme of the proof of Theorem \ref{theorem:first}.
Take $$\St_{n} \define n^{-1/2}\sum\nolimits_{i=1}^{n}L_{i,}$$
 where $\{L_{i}=Z_{\Sigma_{L},i}+U_{L,i}\}_{i=1}^{n}$ are i.i.d. copies of $L =Z_{\Sigma_{L}}+U_{L}$. 
Denote for $i\in\{1,\dots,n\}$ the standardized versions of the terms in $L_{i}$ as follows:
$$\Ut_{i}\define n^{-1/2}\Sigma_{L}^{-1/2}U_{L,i},\quad \Zt_{i}\define n^{-1/2}\Sigma_{L}^{-1/2} (\Sigma-\Sigma_{L})^{1/2}Z_{0,i},$$ where $Z_{0,i}\sim \mathcal{N}(0,\Id_{d})$  are  i.i.d. and  independent from all other random variables. Without loss of generality we can assume 
\begin{align}
\label{eq:lminz}
\Sigma_{L}&= \lminz^{2}\Id_{d},
\end{align}
since due to condition \eqref{condit:L_moments} the smallest eigenvalue $\lminz^{2}$ of $\Sigma_{L}$ is positive. Let also 
$$\st_{l}\define \sum\nolimits_{i=1}^{l-1}\Zt_{i}+\sum\nolimits_{i=l+1}^{n}\Ut_{i}$$ 
for $l=1,\dots,n$, where the sums are taken equal zero if index $i$ runs beyond the specified range.  Random vectors $\st_{l}$ are independent from  $\Ut_{l}$, $\Zt_{l}$  and 
$\st_{l}+ \Zt_{l}=\st_{l+1}+ \Ut_{l+1},\quad l=1,\dots,n-1.$ 
Take arbitrary $B\in \BCl$, and let $B^{\prime}\define \{x\in \R^{d}: \Sigma_{L}^{1/2} x\in B\}$ denote the transformed version of $B$ after the standardization. Let also random vector $Z\sim \mathcal{N}(0,\Id_{d})$ be independent from all other  random variables. It holds
\begin{align}
\nonumber
&\P(\St_{n} \in B)-\P(\Zsigma \in B)
\\
\nonumber
&=\E\Bigl\{ \P\bigl(Z+\tsum_{l=1}^{n}\Ut_{l}\in B^{\prime}\vert \{\Ut_{l}\}_{l=1}^{n}\bigr)-\P\bigl(Z+\tsum_{l=1}^{n}\Zt_{l}\in B^{\prime}\vert \{\Zt_{l}\}_{l=1}^{n}\bigr)\Bigr\}
\\
\nonumber
& =
\sum\nolimits_{l=1}^{n}\E \int\nolimits_{B^{\prime}}\phid(t-\st_{l}-\Ut_{l})-\phid(t-\st_{l}-\Zt_{l}) dt
\\
\label{eq:Taylorapp_symm}
&=
\sum\nolimits_{l=1}^{n}\sum\nolimits_{j=0}^{5}(j!)^{-1}(-1)^{j}\E \int\nolimits_{B^{\prime}}\phid^{(j)}(t-\st_{l})\Ut_{l}^{j}-\phid^{(j)}(t-\st_{l})\Zt_{l}^{j} dt
+R_{6,L}
\\&=24^{-1} \sum\nolimits_{l=1}^{n}\E \int\nolimits_{B^{\prime}}\inner{\phid^{(4)}(t-\st_{l})}{\Ut_{l}^{\otimes 4} - \Zt_{l}^{\otimes 4}}dt +R_{6,L}
\label{eq:proof1_symm}
\\
&
\leq
24^{-1/2} n^{-1} \|\E \{(\Sigma_{L}^{-1/2}U_{L,1})^{\otimes 4}\} - \E\{(\Sigma_{L}^{-1/2} (\Sigma-\Sigma_{L})^{1/2}Z_{0,1})^{\otimes 4}\} \|_{\mathrm{F}} 
 + |R_{6,L}|.
\label{eq:proof1_R6L}
\\
&=
24^{-1/2} n^{-1} \lminz^{-4}\|\E (X_{1}^{\otimes 4}) - \E(\Zsigma^{\otimes 4})\|_{\mathrm{F}} 
 + |R_{6,L}|.
 \nonumber
 \end{align}
In \eqref{eq:Taylorapp_symm} we consider the 5-th order Taylor expansion of $\phid(t-\st_{l}+x)$ around 0 w.r.t. $x=-\Ut_{l}$ and $x=-\Zt_{l}$, with the error term $R_{6,L}$. \eqref{eq:proof1_symm} follows from condition \eqref{condit:poly} which implies $E (X_{i}^{\otimes j})=0$ for $j=1,3,5$. \eqref{eq:proof1_R6L} follows from  \eqref{ineq:H_ortog_A} in Lemma \ref{lemma:symm} (similarly to the bounds on $R_{3}$ and $R_{3,T}$ in \eqref{ineq:F_op}, \eqref{ineq:F_op_T}). %

The remainder term $R_{6,L}$ is specified as follows: 
\begin{align}
\nonumber
&R_{6,L} \define \sum\nolimits_{l=1}^{n}(5!)^{-1}\E (1-\tau)^{5}\int\nolimits_{B^{\prime}}\phid^{(6)}(t-\st_{l}-\tau\Ut_{l})\Ut_{l}^{6}dt
\\
\nonumber
&\quad-\sum\nolimits_{l=1}^{n}(5!)^{-1}\E (1-\tau)^{5}\int\nolimits_{B^{\prime}}\phid^{(6)}(t-\st_{l}-\tau\Zt_{l})\Zt_{l}^{6}dt
\\
\label{ineq:R6_H_L}
& 
\leq   (n^{2} \sqrt{6!})^{-1}\{ \lminz^{-6} \E \|U_{L}\|^6+\E\|(\lminz^{-2}\Sigma-\Id_{d})^{1/2} Z_{0,1}\|^{6}\}
\\& 
\leq  C_{6,L} d^{3}/n^{2}, 
\nonumber
\end{align}
where $C_{6,L}=C_{6,L}(\lminz,\Sigma, \E(U_{L}^{\otimes 6}))$, and 
\eqref{ineq:R6_H_L} follows from \eqref{ineq:H_ortog}.

Consider the term $\bigl|\P(S_{n}\in B)-\P(\St_{n}\in B)\bigr|$,  here we apply Theorem \ref{theorem:BE16} for $K=6$ and $C_{B,6}=2.9 C_{\ell_{2}}C_{\phi,6}$ (cf. \eqref{ineq:BE_bound} in the proof of Theorem \ref{theorem:first}):
\begin{align}
\nonumber
\bigl|\P(S_{n}\in B)-\P(\St_{n}\in B)\bigr|
 &\leq 
C_{B,6}\bigl\{ 
\lminz^{-6} 
\E(\|X_{1}\|^{6}+\|L_{1}\|^{6})
\bigr\}^{1/4}n^{-1/2},
\\&\leq
C_{B,6}(
\lminz^{-6} 
m_{6,sym}
)^{1/4}d^{3/4}n^{-1/2}.
\nonumber
\end{align}
\end{proof}
\begin{proof}[Proof of Proposition \ref{prop:example_d}]
\begin{align*}
\Delta_{\BCl}(S_{n},Z)  &\geq  \sup\nolimits_{x\geq 0}\bigl| P( \|S_{n}\|^{2}\leq x)- P( \|Z\|^{2}\leq x)\bigr|
\\&\geq \Delta_{L}( \|S_{n}\|^{2}, \|Z\|^{2}) 
\\&=\Delta_{L}(\|Z\|^{2}  +D_{n}, \|Z\|^{2})
\geq C d^{3/2}/n
\end{align*}
for sufficiently large $d$ and $n$, and for a generic constant $C>0$. The latter inequality follows from the definition of the  L\'evy distance, from boundedness of $D_{n}(d^{2}/n)^{-1}$ in probability and, since for a fixed $a>0$
$$\sup _{x\geq 0} \int_{x}^{x+a} f_{\chi^{2}_{d}}(t)dt  =aO(d^{-1/2}),\quad d\to \infty,$$
$f_{\chi^{2}_{d}}(t)$ denotes the p.d.f. of $\|Z\|^{2}\sim \chi^{2}_{d}$ (cf. Remark \ref{ref:AC_opt} in Section \ref{sect:cited}).
\end{proof}
\begin{proof}[Proof of Lemma \ref{lemma:appl}]
 Condition \ref{condit:poly} is fulfilled by the symmetry of $\mathcal{N}(0,\Id_{d})$ around the origin. We construct vector $L$ such  that \eqref{condit:L_moments} is fulfilled with prescribed $\lminz$. 
 Let $Y=(y_{1},\dots,y_{d})^{T}$, where $y_{j}$ are i.i.d. centered and standardized  double exponential, with p.d.f. $2^{-1/2}e^{-\sqrt{2}\abs{x}}$ for $x\in \R$. Take  
 \begin{align*}
 L&\define (1-\sqrt{2/5})^{1/2}\tilde{Z}+ (2/5)^{1/4}\{YY^{T}\}^{1/2}Z 
 \end{align*}
 for i.i.d.  $Z,\tilde{Z}\sim  \mathcal{N}(0, \Id_{d})$ independent from $Y$. Then $\E (L^{\otimes j})=0$ for j=1,3,5,  $\Var L= \Id_{d}$, and 
 $$\E (L^{\otimes 4})_{i,j,k,l}=\E (X^{\otimes 4})_{i,j,k,l}= \begin{cases}
9,& i=j= k=l,\\
1,&i=j\neq k=l,\\
0,& \text{otherwise}.
 \end{cases}$$
\end{proof}

\section{Proofs for Sections \ref{section:ES_bootstr}-\ref{sect:score}}
\label{sect:ES_boostr_proofs}
Below we cite Bernstein's inequality for sub-exponential random variables by \cite{BouchLug2013Conc} (this is a short version of Theorem 2.10 in the monograph), which is used in the proofs in this section.
\begin{theorem}[\cite{BouchLug2013Conc}]
\label{theorem:conc_book}
Let $X_{1},\dots, X_{n}$ be independent real-valued random variables. Assume that there exist positive numbers $\nu$ and $c$ such that $\sum_{i=1}^{n} \E (X_{i}^{2}) \leq \nu$ and 
$\sum_{i=1}^{n} \E (X_{i}^{q})_{+} \leq q! \nu c^{q-2}/2$ for all integers $q\geq 3$, where $x_{+}=\max(x,0)$, then for all $t>0$
$$\P\Bigl(\sum\nolimits_{i=1}^{n} (X_{i} - \E X_{i}) \geq \sqrt{2\nu t} + ct\Bigr) \leq e^{-t}.$$
\end{theorem}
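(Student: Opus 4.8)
The plan is to prove Theorem~\ref{theorem:conc_book} by the classical Cram\'er--Chernoff (exponential moment) method. The argument has three ingredients: a one-sided bound on the moment generating function of $\sum_{i=1}^{n}(X_{i}-\E X_{i})$ valid for $0<\lambda<1/c$; the exponential Markov inequality; and an explicit optimization of the resulting exponent over the free parameter $\lambda$.

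First I would establish a pointwise estimate: for every $\lambda>0$ and every $x\in\R$,
\[
e^{\lambda x}-1-\lambda x \;\le\; \sum\nolimits_{q\ge 2}\frac{\lambda^{q}(x^{q})_{+}}{q!}.
\]
When $x>0$ this is an identity, since $(x^{q})_{+}=x^{q}$; when $x\le 0$ it follows from $e^{u}-1-u\le u^{2}/2$ (valid for $u\le 0$) together with the fact that the remaining right-hand terms, being even powers, are nonnegative. Taking expectations termwise (all summands are nonnegative, so Tonelli applies; finiteness of $\E e^{\lambda X_{i}}$ is clear because $e^{\lambda x}\le 1$ for $x\le 0$ and the positive-$x$ part is dominated by the convergent moment series) and summing over $i$ yields, for $\lambda c<1$,
\[
\sum\nolimits_{i=1}^{n}\bigl(\E e^{\lambda X_{i}}-1-\lambda\E X_{i}\bigr)
\le \frac{\lambda^{2}}{2}\sum\nolimits_{i=1}^{n}\E(X_{i}^{2})
+\sum\nolimits_{q\ge 3}\frac{\lambda^{q}}{q!}\sum\nolimits_{i=1}^{n}\E(X_{i}^{q})_{+}
\le \frac{\nu\lambda^{2}}{2}\sum\nolimits_{k\ge 0}(\lambda c)^{k}
=\frac{\nu\lambda^{2}}{2(1-\lambda c)}.
\]
Writing $a_{i}\define \E e^{\lambda X_{i}}-1-\lambda\E X_{i}\ge 0$ and using $1+w\le e^{w}$ coordinatewise gives $\E e^{\lambda(X_{i}-\E X_{i})}=e^{-\lambda\E X_{i}}\E e^{\lambda X_{i}}\le e^{a_{i}}$, hence by independence
\[
\E\exp\Bigl(\lambda\sum\nolimits_{i=1}^{n}(X_{i}-\E X_{i})\Bigr)\le \exp\Bigl(\tfrac{\nu\lambda^{2}}{2(1-\lambda c)}\Bigr),\qquad 0<\lambda<1/c.
\]

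Next I would apply the exponential Markov inequality: for $s>0$ and $0<\lambda<1/c$,
\[
\P\Bigl(\sum\nolimits_{i=1}^{n}(X_{i}-\E X_{i})\ge s\Bigr)\le \exp\Bigl(-\lambda s+\tfrac{\nu\lambda^{2}}{2(1-\lambda c)}\Bigr),
\]
and then optimize the exponent. The Legendre transform of $\lambda\mapsto \tfrac{\nu\lambda^{2}}{2(1-\lambda c)}$ on $(0,1/c)$ equals $\tfrac{\nu}{c^{2}}h(cs/\nu)$ with $h(u)=1+u-\sqrt{1+2u}$; since $h$ is increasing on $[0,\infty)$ with inverse $h^{-1}(v)=v+\sqrt{2v}$, the exponent can be driven below $-t$ precisely when $s\ge ct+\sqrt{2\nu t}$. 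Taking $s=\sqrt{2\nu t}+ct$ then delivers the stated bound $e^{-t}$.

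The series summation and the inversion of $h$ are routine; the one delicate point is that the hypotheses control \emph{only} the positive-part moments $\E(X_{i}^{q})_{+}$, so the proof must never pass through a two-sided quantity such as $\E e^{\lambda|X_{i}|}$, which need not be finite. Splitting the pointwise bound at $x=0$, as above, is exactly what keeps the entire argument one-sided, and it is also the reason the conclusion is a one-sided deviation inequality rather than a two-sided one.
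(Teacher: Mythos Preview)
Your argument is correct and is precisely the standard Cram\'er--Chernoff proof of Bernstein's inequality. Note, however, that the paper does not supply its own proof of this statement: it is quoted verbatim as (a short version of) Theorem~2.10 in \cite{BouchLug2013Conc} and used as a black box in the subsequent bootstrap analysis. Your derivation matches the one found in that reference, so there is nothing to compare.
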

We also use the following statement
\begin{lemma}
\label{lemma:tail_ineq}
Let real-valued random variables  $x,y \in \mathcal{G}(\sigma^{2})$ for some $\sigma^{2}>0$ (see definition \eqref{condit:expm1} in Section \ref{section:ES_bootstr}), then it holds for all $t>0$
\begin{align*}
\P\left(\abs{xy-\E(xy)} \geq 4\sigma^{2} ( \sqrt{8t} + t) \right)&\leq 2  e^{-t}
\end{align*}
\end{lemma}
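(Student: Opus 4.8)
The plan is to deduce the inequality from a single application of the Bernstein-type bound in Theorem~\ref{theorem:conc_book} with $n=1$, applied to the centered variable $Z\define xy-\E(xy)$ (well defined since $\E|xy|\le\sigma^{2}<\infty$ by Cauchy--Schwarz). Concretely, I will exhibit constants $\nu,c>0$ with $\E Z^{2}\le\nu$ and $\E(Z^{q})_{+}\le q!\,\nu\,c^{q-2}/2$ for all integers $q\ge 3$; Theorem~\ref{theorem:conc_book} then gives $\P(Z\ge\sqrt{2\nu t}+ct)\le e^{-t}$, and the same estimate applied to $-Z$ (whose moments obey the same bounds) yields, after a union bound over $\{Z\ge s\}\cup\{-Z\ge s\}$, the factor $2$ in front of $e^{-t}$. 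Note that independence of $x$ and $y$ is never used.

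The first step is to collect the standard sub-Gaussian moment estimates. From $x\in\mathcal{G}(\sigma^{2})$ one has $\P(|x|>s)\le 2e^{-s^{2}/(2\sigma^{2})}$, and a routine integration of this tail gives $\E|x|^{2q}\le 2\,q!\,(2\sigma^{2})^{q}$ for every integer $q\ge 1$ (in particular $\E x=0$, $\E x^{2}\le\sigma^{2}$); the same holds for $y$. These explicit constants are exactly what propagates into $\nu$ and $c$.

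The second step transfers the bounds to $Z$. Using $|xy|\le\tfrac12(x^{2}+y^{2})$ together with convexity ($(a+b)^{q}\le 2^{q-1}(a^{q}+b^{q})$ for $a,b\ge 0$, $q\ge 1$), one gets $\E|xy|^{q}\le\tfrac12(\E x^{2q}+\E y^{2q})\le 2\,q!\,(2\sigma^{2})^{q}$, hence $|\E(xy)|\le\E|xy|\le 4\sigma^{2}$; thus $|Z|\le|xy|+4\sigma^{2}$, and applying convexity once more and absorbing $(4\sigma^{2})^{q}$ into a factorial term gives, for all $q\ge 2$,
\[
\E|Z|^{q}\le 2^{q-1}\bigl(\E|xy|^{q}+(4\sigma^{2})^{q}\bigr)\le 2\,q!\,(4\sigma^{2})^{q}.
\]
Taking $\nu\define 64\sigma^{4}$ and $c\define 4\sigma^{2}$, the variance condition $\E Z^{2}\le 2\cdot 2!\,(4\sigma^{2})^{2}=\nu$ holds, and the higher-moment condition $\E(Z^{q})_{+}\le\E|Z|^{q}\le 2\,q!\,(4\sigma^{2})^{q}=q!\,\nu\,c^{q-2}/2$ holds (in fact with equality) for every $q\ge 3$.

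The last step is bookkeeping: Theorem~\ref{theorem:conc_book} with $n=1$ gives $\P\bigl(xy-\E(xy)\ge\sqrt{2\nu t}+ct\bigr)=\P\bigl(xy-\E(xy)\ge 8\sqrt{2}\,\sigma^{2}\sqrt{t}+4\sigma^{2}t\bigr)\le e^{-t}$, and $8\sqrt{2}\,\sigma^{2}\sqrt{t}+4\sigma^{2}t=4\sigma^{2}(\sqrt{8t}+t)$; applying the same to $-Z$ and adding the two probabilities yields the claim. I do not expect any genuine obstacle: the whole content is the passage from sub-Gaussian moment growth to a sub-exponential Bernstein bound, and the only point needing a little care is calibrating the moment estimates so that $\sqrt{2\nu t}+ct$ collapses exactly to $4\sigma^{2}(\sqrt{8t}+t)$ rather than a marginally larger expression (a slightly sharper moment bound, e.g.\ $\E|Z|^{q}\le\tfrac32 q!(4\sigma^{2})^{q}$ via $|\E(xy)|\le\sigma^{2}$, even leaves room to spare).
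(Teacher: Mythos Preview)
Your proposal is correct and follows essentially the same approach as the paper: verify the moment hypotheses of Theorem~\ref{theorem:conc_book} with $n=1$, $\nu=64\sigma^{4}$, $c=4\sigma^{2}$, using the sub-Gaussian moment bound $\E x^{2q}\le 2\,q!\,(2\sigma^{2})^{q}$, then apply the two-sided union bound. The only cosmetic difference is that the paper bounds $\E|xy|^{q}$ via Cauchy--Schwarz, $\E|xy|^{q}\le\{\E x^{2q}\E y^{2q}\}^{1/2}$, whereas you use AM--GM plus convexity; both routes give the same $2\,q!\,(2\sigma^{2})^{q}$ and lead to the identical constants.
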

\begin{proof}[Proof of Lemma \ref{lemma:tail_ineq}]
We show that random variable ${xy-\E(xy)}$ satisfies conditions of Theorem \ref{theorem:conc_book} and apply its statement for $n=1$. Theorem 2.1 by \cite{BouchLug2013Conc} implies that if $x\in \mathcal{G}(\sigma^{2})$, then 
\begin{align*}
\E (x^{2q}) \leq 2q! (2\sigma^{2})^{q}\quad \forall\,\text{integer } q\geq 1.
\end{align*}
Therefore, it holds by H\"older's and Jensen's inequalities
\begin{align*}
\E (xy-\E(xy))^{2} &\leq  \E (xy)^2 \leq  \{\E (x^{4}) \E (y^{4})\}^{1/2} \leq 16\sigma^{4},\\
 \E (xy-\E(xy))_{+}^{q}& \leq  \E ( |xy|^{q} )2^{q} \leq  \{\E (x^{2q}) \E (y^{2q})\}^{1/2} 2^{q}
 \leq  2q! (4 \sigma^{2})^{q}.
\end{align*}
These inequalities allow to take $\nu = 64 \sigma^{4}$ and $c=4\sigma^{2}$ in the statement of Theorem \ref{theorem:conc_book}.
\end{proof}

\begin{proof}[Proof of Theorem \ref{theorem:bootst}]
Without loss of generality, let $\mu=0$. We use the construction in the proof of Theorem \ref{theorem:multivar_gen}. 
Denote 
$$\Sigmac \define n^{-1}\tsum_{i=1}^{n} X_{i}^{\otimes 2},\quad \Sigmah\define n^{-1}{{\textstyle\sum\nolimits_{i=1}^{n}}} (X_{i}- \Xmean)^{\otimes 2}.$$
Since each coordinate of $\Xmean$ belongs to $\mathcal{G}(\sigma^{2}/n)$, it holds 
for any $t>0$
\begin{align}
\label{ineq:Xbar_norm}
\P\bigl(\|\Xmean\|^{2} \leq  2\sigma^{2}(d/n) t  \bigr) &\geq 1 -2 d e^{-t}
\end{align}
Similarly, by condition \eqref{condit:expm1}, Lemma \ref{lemma:tail_ineq}, and \eqref{ineq:Xbar_norm}, it holds for any $t>0$
\begin{align*}
\P\bigl(\| \Sigmac-\Sigma\|\leq  4\sigma^{2}(dn^{-1/2}) (\sqrt{8t} + tn^{-1/2})   \bigr) &\geq 1-(d^{2}+d)e^{-t},\\
\P\bigl(\| \Sigmah-\Sigma\|_{\mathrm{F}}\leq  2\sigma^{2}(d/n^{1/2})\{4\sqrt{2t} + 3tn^{-1/2}\}\bigr) &\geq 1-(d^{2}+3d)e^{-t}.
\end{align*}
Hence we can take   
$\lmin^{2}\define \lambda_{\min}(\Sigma)-2\sigma^{2}(d/n^{-1/2})\{4\sqrt{2t} + 3tn^{-1/2}\},$ provided that this expression is positive (this is ensured by condition \eqref{condit:lambda_n}).

It holds for the bootstrap terms included in ${V}_{T,4}$ and $v_{4}$: 
\begin{gather*}
\P\bigl( \Es\| X^{\ast}_{j}\|^{4}\leq 8(1+n^{-2})\{2\sigma^{2}(d/n) t  \}^{2}\bigr) \geq 1-2nde^{-t} ,\\
\P\bigl( \|\Sigmah\|^{2}\leq  2\|\Sigma\|^{2} + 2 \bigl\{ \sigma^{2}(d/\sqrt{n})C_{1}(t)\bigr\}^{2}
\bigr) \geq 1-(d^{2}+3d)e^{-t}.
\end{gather*}

Now we consider the Frobenius norm of the difference between the 3-d order moments:
  \begin{align*}
 & \|\Es({\Xs_{j}}^{\otimes 3})-\E( X_{1}^{\otimes 3})\|_{\mathrm{F}}
 \\& \leq   \|n^{-1}\tsum_{i=1}^{n}X_{i}^{\otimes 3}-\E( X_{1}^{\otimes 3})\|_{\mathrm{F}} +2\|\bar{X}\|^{3}+3\|\bar{X}\|\|\Sigmac \|_{\mathrm{F}}
 \\&\leq
  \|n^{-1}\tsum_{i=1}^{n}X_{i}^{\otimes 3}-\E( X_{1}^{\otimes 3})\|_{\mathrm{F}}
  +2 \{2\sigma^{2}(d/n) t\}^{3/2} 
  \\&\quad+3\sqrt{ 2\sigma^{2}(d/n) t} \{\|\Sigma\|_{\mathrm{F}}+4\sigma^{2}(dn^{-1/2}) (\sqrt{8t} + tn^{-1/2})  \}
 \end{align*}
 with probability $\geq 1-(d^{2}+3d)e^{-t}$.
    \begin{align*}
      &\|n^{-1}\tsum_{i=1}^{n}X_{i}^{\otimes 3}-\E( X_{1}^{\otimes 3})\|_{\mathrm{F}}
\\&\leq
   \|n^{-1}\tsum_{i=1}^{n}X_{i}^{\otimes 3}-n^{-1}\tsum_{i=1}^{n}X_{i}\otimes\Sigma\|_{\mathrm{F}} 
     + \|\Xbar\|\|\Sigma \|_{\mathrm{F}} 
      +\|\E( X_{1}^{\otimes 3})\|_{\mathrm{F}}      
     \\&\leq    
      \|n^{-1}\tsum_{i=1}^{n}X_{i}^{\otimes 3}-n^{-1}\tsum_{i=1}^{n}X_{i}\otimes\Sigma\|_{\mathrm{F}} 
     + \{  2\sigma^{2}(d/n) t\}^{1/2}  \|\Sigma\|_{\mathrm{F}} 
      +\|\E( X_{1}^{\otimes 3})\|_{\mathrm{F}}.
 \end{align*}
For the term $\|n^{-1}\tsum_{i=1}^{n}X_{i}^{\otimes 3}-n^{-1}\tsum_{i=1}^{n}X_{i}\otimes\Sigma\|_{\mathrm{F}} $, we have 
     \begin{align*}
       &\|n^{-1}\tsum_{i=1}^{n}X_{i}^{\otimes 3}-n^{-1}\tsum_{i=1}^{n}X_{i}\otimes\Sigma\|_{\mathrm{F}} 
    \\ &\leq \max\nolimits_{1\leq i\leq n} \|X_{i}\|_{\max} 4\sigma^{2}(d^{3/2}n^{-1/2}) (\sqrt{8t} + tn^{-1/2})  
      \\ &\leq 4\sqrt{2t}\sigma^{3}(d^{3/2}n^{-1/2}) (\sqrt{8t} + tn^{-1/2})  
      \end{align*}
with probability $\geq1-2 dn e^{-t}.$
Collecting all bounds together, we derive
     \begin{align*}
&\sup\nolimits_{B\in \BCl}| \P(\Sbar\in B)- \Ps(\Snast\in B )|
\\&\leq 
(\sqrt{2}\beta^{2}\lmin^{2})^{-1}\bigl\{  \sigma^{2}(d/\sqrt{n})C_{1}(t) \bigr\}
\\&
+(\sqrt{6}\beta^{3} \lmin^{3})^{-1}
\Bigl[ 
2 \{2\sigma^{2}(d/n) t\}^{3/2} 
+ 3\sqrt{ 2\sigma^{2}(d/n) t} \{4\sigma^{2}(dn^{-1/2}) (\sqrt{8t} + tn^{-1/2})  \}
\\&\quad + 4\sqrt{2t}\sigma^{3}(d^{3/2}n^{-1/2}) (\sqrt{8t} + tn^{-1/2})  
+4\{  2\sigma^{2}(d/n) t\}^{1/2}  \|\Sigma\|_{\mathrm{F}} +\|\E( X_{1}^{\otimes 3})\|_{\mathrm{F}}
\Bigr]n^{-1/2}
\\
&  
+ 4\sqrt{2}C_{B,4}\lmin^{-2}\Bigl\{  
h_{1}(\beta) [ \E \|X_{1}\|^{4}+  8(1+n^{-2})\{2\sigma^{2}(d/n) t  \}^{2} ]
\\&
\hspace{0.5cm}
+(d^{2}+2d)( 3\|\Sigma\|^{2} + 2 \bigl\{ \sigma^{2}(d/\sqrt{n})C_{1}(t)\bigr\}^{2}+1/2)
\Bigr\}^{1/2}n^{-1/2}
\\&+   2(\sqrt{6}\lmin^{4})^{-1}\Bigl\{h_{1}(\beta) [ \E \|X_{1}\|^{4}+8(1+n^{-2})\{2\sigma^{2}(d/n) t  \}^{2} ]
\\& 
\hspace{2cm}
+(d^{2}+2d)[ 3\|\Sigma\|^{2} +  2 \bigl\{ \sigma^{2}(d/\sqrt{n})C_{1}(t)\bigr\}^{2}] \Bigr\}n^{-1}
      \end{align*}
      with probability $\geq1- (2 dn +d^{2}+3d)e^{-t},$ which leads to the resulting statement with $t\define t_{\ast}$.
\end{proof}



Proposition \ref{prop:ellip2} follows from Theorem \ref{theorem:bootst}, with the error term $\delta_{W}$. Denote $\lmin^{2}\define
 \lambda_{\min}(W^{1/2}\Sigma W^{1/2})- \sigma_{W}^{2}(d/\sqrt{n})C_{1\ast}$ and $X_{0,1}\overset{d}{=}X_{1}-\E X_{1}$, then
 \begin{align}
\label{def:deltaW}
\delta_{W} 
&\define 
(\sqrt{2}\beta^{2}\lmin^{2})^{-1}\bigl\{  \sigma_{W}^{2}(d/\sqrt{n})C_{1\ast} \bigr\}
\\
\nonumber
&\quad
+(\sqrt{6}\beta^{3} \lmin^{3})^{-1}
\Bigl[ 
4\sigma_{W} \sqrt{2 dn^{-2} t_{\ast}}
\{ \|W^{1/2}\Sigma W^{1/2}\|_{\mathrm{F}}+\sigma_{W}^{2}(d/n)t_{\ast}\}
\\
\nonumber
&\quad+
\sigma_{W}^{2}d^{3/2}n^{-1} C_{2\ast}\{1+3n^{-1/2}\} +\|\E(W^{1/2} X_{0,1})^{\otimes 3}\|_{\mathrm{F}} n^{-1/2}
\Bigr]
\\
\nonumber
&  
\quad+  4\sqrt{2}C_{B,4}\lmin^{-2}\Bigl\{  
h_{1}(\beta) \bigl[ \E \|W^{1/2}X_{0,1}\|^{4}+  8(1+n^{-2})\{2\sigma_{W}^{2}(d/n) t_{\ast}  \}^{2} \bigr]
\\
\nonumber
&
\quad+(d^{2}+2d)( 3\|W^{1/2}\Sigma W^{1/2} \|^{2} + 2 \bigl\{ \sigma_{W}^{2}(d/\sqrt{n})C_{1\ast}\bigr\}^{2}+1/2)
\Bigr\}^{1/2}n^{-1/2}
\\
\nonumber
&
\quad+    2(\sqrt{6}\lmin^{4})^{-1}\Bigl\{h_{1}(\beta) \bigl[ \E \|W^{1/2}X_{0,1}\|^{4}+8(1+n^{-2})\{2 \sigma_{W}^{2}(d/n) t_{\ast}  \}^{2} \bigr]
\\
\nonumber
& 
\quad+(d^{2}+2d)\bigl[ 3\|W^{1/2} \Sigma W^{1/2}\|^{2} +  2 \bigl\{ \sigma_{W}^{2}(d/\sqrt{n})C_{1\ast}\bigr\}^{2}\bigr] \Bigr\}n^{-1}+n^{-1}.
\end{align}
Theorem \ref{theorem:score1} follows from  Theorem \ref{theorem:bootst}, with the following error term. Denote $\Sigma_{s}\define I(\theta^{\prime})/n$, $X_{1}= \partial \log p(y_{1};\theta^{\prime})/{\partial \theta^{\prime}}$, 
 $\lmin^{2}\define  \lambda_{\min}(\Sigma_{s})- \sigma_{s}^{2}(d/\sqrt{n})C_{1\ast}$,
then
\begin{align}
\label{def:deltaR}
\delta_{R}&
\define 
(\sqrt{2}\beta^{2}\lmin^{2})^{-1}\bigl\{\sigma_{s}^{2}(d/\sqrt{n})C_{1\ast} \bigr\}
\\
\nonumber
&\quad
+(\sqrt{6}\beta^{3} \lmin^{3})^{-1}
\Bigl[ 
4\sigma \sqrt{2 dn^{-2} t_{\ast}}
\{ \|\Sigma_{s}\|_{\mathrm{F}}+\sigma_{s}^{2}(d/n)t_{\ast}\}
\\
\nonumber
&\quad+
\sigma_{s}^{2}d^{3/2}n^{-1} C_{2\ast}\{1+3n^{-1/2}\} +\|\E( X_{1}^{\otimes 3})\|_{\mathrm{F}} n^{-1/2}
\Bigr]
\\
\nonumber
&  
\quad+  4\sqrt{2}C_{B,4}\lmin^{-2}\Bigl\{  
h_{1}(\beta) \bigl[ \E \|X_{1}\|^{4}+  8(1+n^{-2})\{2\sigma_{s}^{2}(d/n) t_{\ast} \}^{2} \bigr]
\\
\nonumber
&
\quad+(d^{2}+2d)( 3\|\Sigma_{s}\|^{2} + 2 \bigl\{ \sigma_{s}^{2}(d/\sqrt{n})C_{1\ast}\bigr\}^{2}+1/2)
\Bigr\}^{1/2}n^{-1/2}
\\
\nonumber
&
\quad+    2(\sqrt{6}\lmin^{4})^{-1}\Bigl\{h_{1}(\beta)\bigl[ \E \|X_{1}\|^{4}+8(1+n^{-2})\{2\sigma_{s}^{2}(d/n) t_{\ast}  \}^{2} \bigr]
\\
\nonumber
& 
\quad+(d^{2}+2d)\bigl[ 3\|\Sigma_{s}\|^{2} +  2 \bigl\{ \sigma^{2}(d/\sqrt{n})C_{1\ast}\bigr\}^{2}\bigr] \Bigr\}n^{-1}.
\end{align}

Theorem \ref{theorem:score2} follows from Theorem \ref{theorem:first}.

\bibliographystyle{imsart-nameyear}
\bibliography{references.bib}

\end{document}